\newcommand{\BR}{\mathbb{R}}
\newcommand{\BN}{\mathbb{N}}
\newcommand{\F}{\mathcal{F} } 
\newcommand{\A}{\mathbb{A} } 
\newcommand{\D}{{\bf D}} 
\newcommand{\R}{\mathbb{R} }
\newtheorem{definition}{Definition}
\newtheorem{theorem}{Theorem}
\newtheorem{proposition}{Proposition}
\newtheorem{lemma}{Lemma}
\newtheorem{corollary}{Corollary}
\newtheorem{remark}{Remark}
\newtheorem{example}{Example}
\title[Topological bifurcation structure of one-parameter families]{Topological Bifurcation Structure Of One-Parameter Families Of $C^1$ Unimodal Maps}
\author{Atsuro Sannami$^1$ and Tomoo Yokoyama$^2$ }
\address{$^1$ Kitami Institute of Technology, Kitami 090-8507, Japan
}
\address{$^2$ Department of Mathematics, Kyoto University of Education / JST PRESTO, 
Fujinomori, Fukakusa, Fushimi-ku, Kyoto 612-8522, Japan (Present address: Gifu University, Yanagido 1-1, Gifu, 501-1193, Japan)
}
\email{a.sannami@gmail.com, sannami@mail.kitami-it.ac.jp and tomoo@gifu-u.ac.jp}
\begin{document}

\maketitle

\begin{abstract}
We consider the bifurcation structure of one-parameter families of unimodal maps whose differentiability is only $C^1$. The structure of its bifurcation diagram can be a very wild one in such case. However we prove that in a certain topological sense, the structure is the same as that of the standard family of quadratic polynomials. In the case of families of polynomials, irreducible component of the bifurcation diagram can be defined naturally by dividing by the polynomials corresponding to lower periods. We show that such a irreducible component can be defined even if the maps and the family satisfy only a very mild differentiability condition. By removing components of lower periods, the structure of the bifurcation diagram becomes a considerably simplified one. We prove that the symbolic condition for the irreducible components is exactly the same as that of the standard family of quadratic polynomials. When we consider families of maps without strict conditions, the bifurcation diagrams may have infinitely many wild components. We show that such a situation does not affect the irreducible component essentially by proving a separation theorem for compact set in the plane which asserts that a given connected component can be cut out from the compact set by a curve. 

\ 

\noindent
Keywords: unimodal map, bifurcation diagram, kneading theory, 
symbolic dynamics, continua theory

\noindent
Mathematics Subject Classification numbers: 37B10, 37B45, 37E05, 37E15, 37G15, 54D05, 54D15
\end{abstract}

\section{Introduction}

One parameter families of unimodal maps are 
the most widely studied object in the theory of 
non-linear dynamical systems. 
Their bifurcation and the dynamics are beautifully described 
by the kneading theory and the theory of one-dimensional 
dynamical systems \cite{MT}, \cite{G}, \cite{CE}, 
\cite{JR1}, \cite{JR2}, \cite{dMvS}. 
However, most neat results are obtained for \lq\lq  good\rq\rq\ families of    
unimodal maps such as a family of quadratic polynomials, 
a family of maps having a negative Schwartzian derivative or 
a family of maps having only regular bifurcations. 

In many practical problems where a family of unimodal maps appears, 
we might not expect the maps to have such nice properties.
In this paper, we show that even if we do not impose any strict 
conditions on the family of maps, the bifurcation structure 
of one-parameter families of $C^1$ unimodal maps is, 
in a certain topological sense, 
the same as that of the standard family of quadratic polynomials. 

We say that a $C^1$ map $f:\BR\to\BR$ is 
{\em unimodal} if 
$f'(x)>0$ for any $x<0$, $f'(x)<0$ for any $x>0$ and $f'(0)=0$. 
Our definition of unimodal map is so general that 
it allows a map having an infinite number of fixed points 
and intervals of fixed points. 

We are interested in the bifurcation structure of one-parameter family of 
unimodal maps which starts from a map having no non-wandering point 
and ends up with a map with full dynamics. 
By {\em full dynamics}, we mean that the non-wandering set is 
conjugate to the full shift of one-sided sequences of two symbols. 
We shall call such a unimodal map a {\em horseshoe}. 
More precisely,

\begin{definition}
A $C^1$ unimodal map $f$ is a {\em horseshoe} if $f$ satisfies 
the following $(i)$ and $(ii)$ (as shown in Fig. \ref{figure_1}).
\begin{description}
\item[$(i)$] $f$ has two fixed points.
\item[$(ii)$] Let $a$ be the left fixed point. 
There exist $a_1$, $b_1$ and $b$ such that 
\begin{itemize}
\item $a<a_1<b_1<b$, 
\item $f(b)=a$, 
\item $f(a_1)=f(b_1)=b$ and 
\item $|f'(x)|>1 \quad \forall x\in[a,a_1]\cup[b_1,b]\ .$
\end{itemize}
\end{description}
\end{definition}

\begin{figure}
\centerline{\includegraphics[width=5cm]{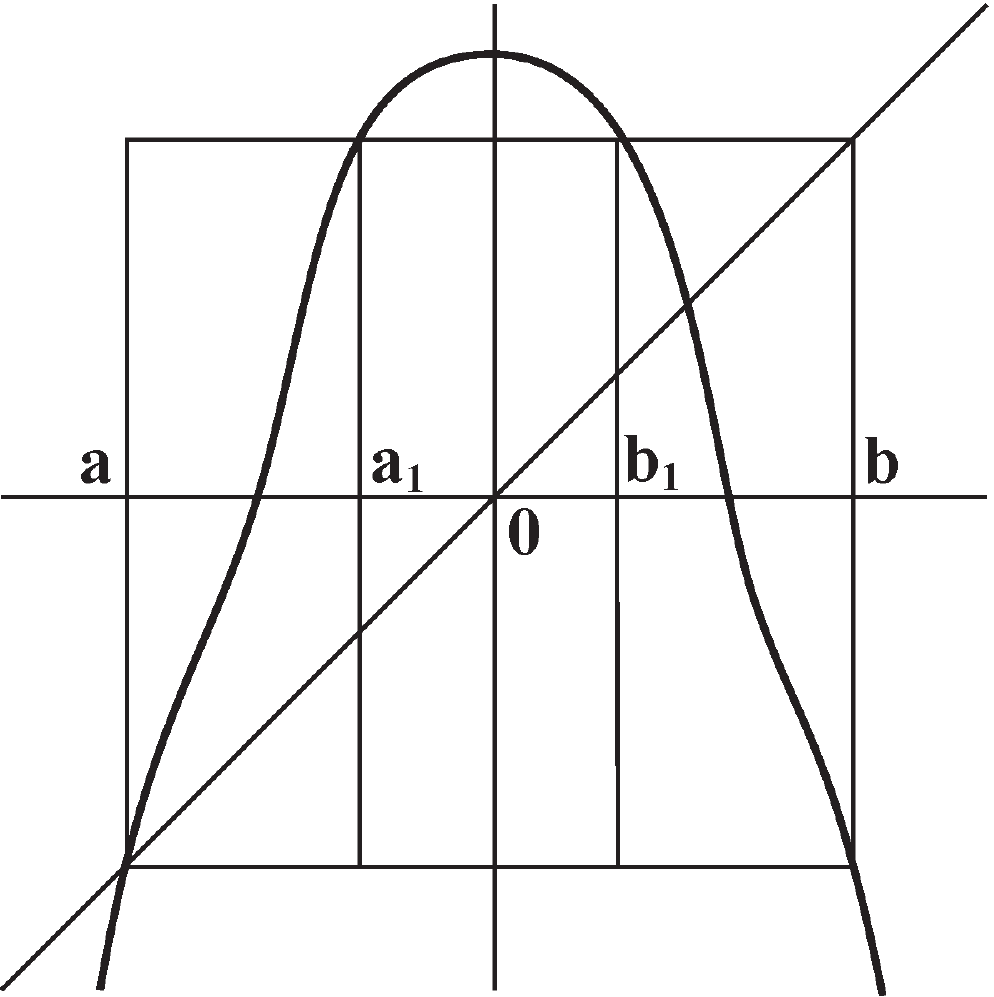}}
\caption{A Horseshoe}
\label{figure_1}
\end{figure}

\begin{remark} 
{\rm 
It is easy to see that $a<a_1<0<b_1<b$. }
\end{remark}

We suppose that the one-parameter families of 
$C^1$ maps we deal with in this paper 
satisfy the following mild continuity and 
differentiability conditions. 

\begin{definition}\label{def:continuous-family}
Let $\{f_t\}_{t\in [0,1]}$ be a one-parameter family of $C^1$ maps 
on $\BR$. We say that $\{f_t\}_{t\in [0,1]}$ is a 
{\em continuous family of $C^1$ maps} if 
$f^0(t,x)=f_t(x)$ and 
$f^1(t,x)={\displaystyle\frac{df_t}{dx}(x)}$  
are continuous functions on $[0,1]\times \BR$. 
\end{definition}

\begin{remark} 
{\rm \item[$(1)$] 
It is easy to see that 
$\{ f_t \}_{t\in [0,1]}$ is a continuous family of $C^1$ maps
if and only if $t \mapsto f_t$ is a continuous curve in the space of 
$C^1$ maps on $\BR$ endowed with the $C^1$ topology.}
\item[$(2)$] {\rm We do not assume that $f^0(t,x)$ is differentiable with respect to $t$. 
Thus, \lq\lq continuous family of $C^1$ maps\rq\rq\  is weaker than 
\lq\lq $f^0(t,x)$ is $C^1$ on $[0,1]\times \BR$~\rq\rq.}
\end{remark}

\begin{definition}\label{def:full-family}
Let $\{f_t\}_{t\in [0,1]}$ be a continuous family of $C^1$ unimodal maps 
on $\BR$. 
We say that $\{f_t\}_{t\in [0,1]}$ is a {\em continuous full family} 
if it satisfies the following properties. 
\begin{description}
\item[$(i)$] $f_0$ has no fixed point (therefore no non-wandering points).
\item[$(ii)$] $f_1$ is a horseshoe. 
\item[$(iii)$] There exists an $M>0$ such that fixed points of $f_t$ are in 
$[-M,M]$ for any $0\le t \le 1$. 
\end{description}
\end{definition}

\begin{remark}\label{rem:unimod}
{\rm 
Since our definition of unimodal map is so general that $f_t$ 
could have infinitely many fixed points diverging to $-\infty$. 
To avoid such a situation, 
we impose the property (iii). 
It is easy to see that no non-wandering point exists 
outside of $[-M,M]$ for any $f_t$. }
\end{remark}

In this paper, we use the following terminology for the 
periodicity of points and sequences. 
Let $f:X\to X$ be a map on a set $X$. 
When $f^n(p)=p$, we say that $p$ is a {\em periodic point of $f$ 
of period $n$}. 
Moreover, if $p$ is not a periodic point of period $k$ for any $0<k<n$,
then we say that {\em the minimal period of $p$ is $n$}. 
The minimal period of $p$ is denoted by $per(p)$. 

Let $\{ f_t \} _{t\in [0,1]}$ be a continuous full family of 
$C^1$ unimodal maps. 
We are interested in the topological structure of the 
bifurcation diagram of periodic points 
of $\{f_t\}$. 
Let $n$ be a positive integer. 
The {\em bifurcation diagram} of periodic points 
of period $n$ is the set 
$\{\,(t,x)\in [0,1]\times \BR\; | \; f_t^n(x)=x \, \}$  
(refer Figure \ref{figure_2}). 
Occasionally we simply call it the bifurcation diagram of period $n$.

\begin{figure}
\centerline{\includegraphics[width=8cm]{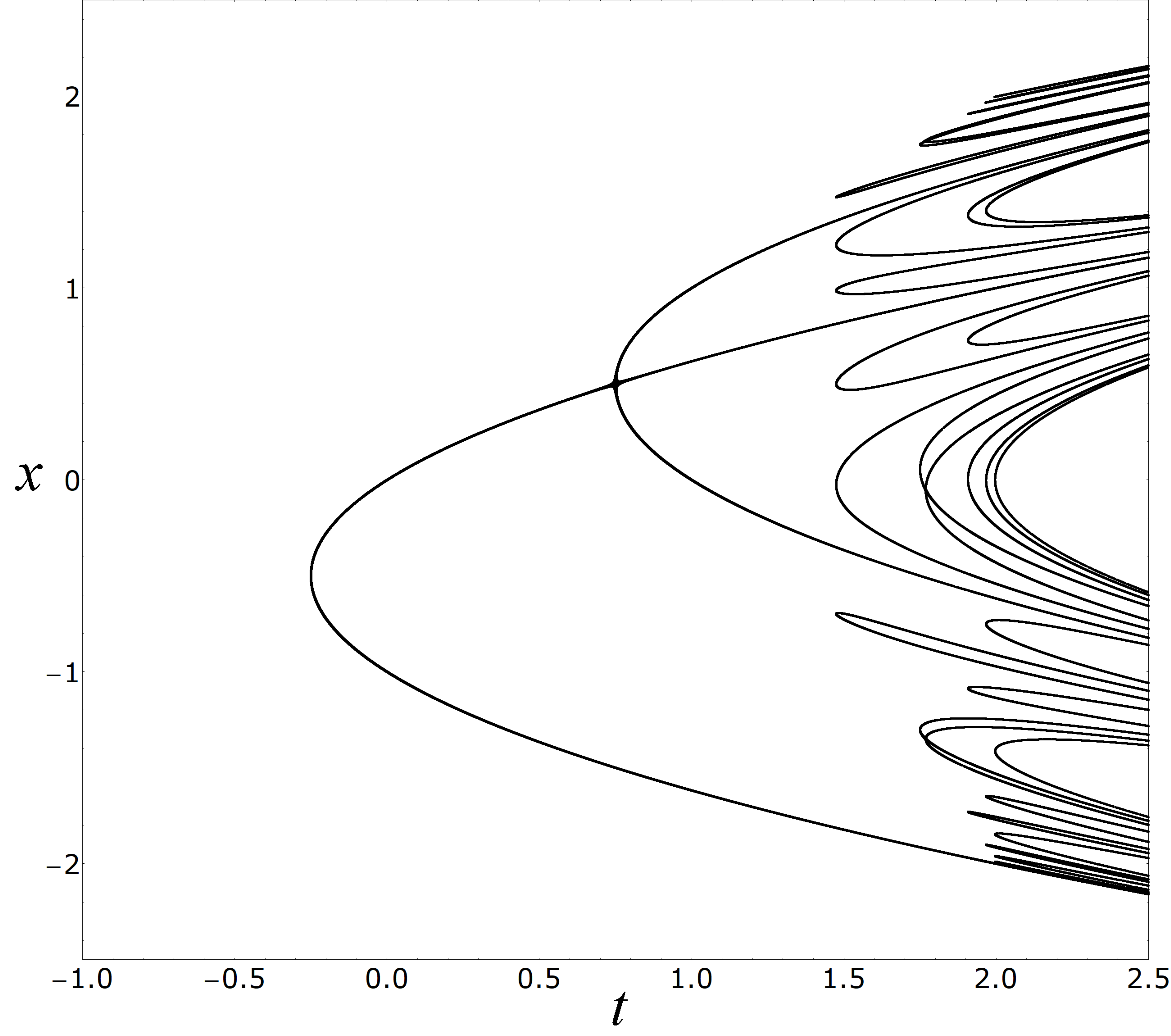}}
\caption{The bifurcation diagram of period 6 of the quadratic family 
$q_t(x)=t-x^2$. 
$f_t(x)=(7/2)t-1-x^2$, a reparametrization of $q_t(x)$, 
satisfies the condition of continuous full family.
By an easy calculation, we can see that 
$q_t$ is a horseshoe for $t>(5+2\sqrt{5})/4 \approx 2.368$. 
}
\label{figure_2}
\end{figure}

By the definition, $f_1$ is a horseshoe. 
Therefore the dynamics of $f_1$ on its non-wandering set 
is equivalent to 
the full shift of one-sided sequences of two symbols, 
and we know rigorously what kind of periodic points $f_1$ has. 

Our question in this paper is; 
\lq\lq What types of periodic points of $f_1$ are 
on the same connected component of the bifurcation diagram ?\rq\rq\   

If there exists even one point where $f_t$ is not differentiable, 
then this question is not interesting at all.  
For example, $f_t(x)=(2t-1)-3|x|$ has only one connected component 
of the bifurcation diagram for any $n\ge 1$ (~$f_t(x)$ is just a 
reparametrization of $g_t(x)=t-3|x|$ ). 

The purpose of this paper is to show that 
for continuous full family of $C^1$ unimodal maps, 
such an equivalence relation of being contained 
in the same connected component of 
the bifurcation diagram is exactly the same as 
that of the standard family of quadratic polynomials. 

We had thought that such a result had been almost obvious or 
it had been known already as a folklore theorem. 
However, we could not find any previous work proving such a statement rigorously. 
And also we found that a considerable amount of non-trivial arguments 
were necessary in the proof.
So we decided to write it down rigorously. 

There are two important points 
on the bifurcation of continuous full family of 
$C^1$ unimodal maps. 
One is the existence of {\it irreducible component} 
of the bifurcation diagram. 
If we consider connected component of the bifurcation diagram itself, 
then some components of different periods have intersection 
because of period doubling bifurcation, 
and the condition of being contained in the same component 
would become a very confusing one. 
To avoid such a confusing situation and make the statement clearer, 
we show that we can divide iterated maps and define irreducible component. 

In the case of polynomial maps, 
irreducible component of the bifurcation diagram can be defined naturally. 
For example, let $g_t(x)=(4t-1)-x^2$ be the standard family of 
quadratic maps and set ${\widetilde G}_2(t,x)=g_t^2(x)-x$ 
(we just reparametrized $q_t(x)=t-x^2$ to get a family consistent with 
the definition of continuous full family). 
Then ${\widetilde G}_2^{-1}(0)$ is a bifurcation diagram of period 2. 
However it also contains a component of fixed points. 
$g_t^2(x)-x$ can be divided by $g_t(x)-x$ and 
$G_2(t,x)={\widetilde G}_2(t,x)/(g_t(x)-x)=x^2-x+2(1-2t)$ 
is also a polynomial of $x$ and $t$. 
$G_2^{-1}(0)$ is the bifurcation diagram of minimal period 2 
except one point of bifurcation, 
and it does not contain the component of fixed points. 
In this paper, we show that such a division is possible, 
and we can define irreducible component of the bifurcation diagram 
even if it satisfies only a very mild differentiability condition 
as in Definition~\ref{def:full-family}. 

Another important point is a separation theorem for compact set in the plane. 
Our family of unimodal maps is so general that its bifurcation diagram may have 
infinitely many wild components. 
We have to guarantee a certain kind of separation of those components 
and show that such a confusing situation does not affect the irreducible 
components topologically. 
We use a recently developed technique \lq\lq filling\rq\rq\,  which was devised 
to analyse homeomorphisms on 2-dimensional manifolds \cite{JKP}, \cite{KT1}, 
\cite{KT2}.

On bifurcation diagram of one-dimensional maps, 
we should mention the works of Guckenheimer, Jonker and Rand. 
A study of the bifurcation diagram of a family of unimodal maps is 
first attempted in a paper of Guckenheimer \cite{G}. 
Assuming certain regularity for maps and families, 
he obtained some qualitative properties of such families. 
\cite{JR2} of Jonker and Rand is a remarkable work 
about a family of $C^1$ unimodal maps. 
Based on the works of 
Jonker \cite{J} and Jonker and Rand \cite{JR1}, 
they investigated family of $C^1$ unimodal maps thoroughly, 
and obtained a universal property of the changing process of kneading invariant 
of such a family. 
Some arguments in the symbolic dynamics part of our paper are 
similar to some arguments in \cite{JR2}, but their result is not about 
the bifurcation diagram itself and our arguments are mainly about 
the symbolic properties of irreducible component which is not defined in \cite{JR2}. 
For this reason, we wrote our paper in a self-contained way.

\section{Irreducible component}

In order to remove components corresponding to 
periodic points of the half period, 
we define a quotient map. 
To formulate it rigorously, 
we need the following proposition.

\begin{proposition}\label{prop:div-map}
Let $f:\BR\to\BR$ be a $C^1$ map and $n$ a positive integer. 
Define a map $F_{2n}:\BR \to \BR$ as follows.
\[F_{2n}(x)=
\left\{ 
\begin{array}{ll}
\displaystyle\frac{f^{2n}(x)-x}{f^n(x)-x} & {\rm on\ } \ 
\{\,x\,|\, f^n(x)-x\ne0\,\} \\
\ & \ \\
(f^n)'(x)+1 & {\rm on\ } \ 
\{\,x\,|\, f^n(x)-x =0 \,\}
\end{array}\right.\]
Then $F_{2n}$ is continuous on $\BR$. 
\end{proposition}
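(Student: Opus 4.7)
The plan is to use the mean value theorem applied to $f^n$ rather than any sort of Taylor expansion or L'Hôpital argument, because the latter runs into trouble at fixed points of $f^n$ where $(f^n)'=1$. Since $f$ is $C^1$, so is $f^n$, and the chain rule gives $(f^n)'$ continuous on $\BR$. The decomposition
\[
f^{2n}(x) - x \;=\; \bigl(f^n(f^n(x)) - f^n(x)\bigr) + \bigl(f^n(x) - x\bigr)
\]
lets me rewrite the defining quotient for $F_{2n}$, at any point with $f^n(x)\neq x$, as
\[
F_{2n}(x) \;=\; \frac{f^n(f^n(x)) - f^n(x)}{f^n(x) - x} + 1.
\]

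Now I would apply the mean value theorem to $f^n$ on the interval with endpoints $x$ and $f^n(x)$ (which are distinct precisely when $f^n(x)-x\neq 0$). This produces some $c_x$ between $x$ and $f^n(x)$ with
\[
f^n(f^n(x)) - f^n(x) \;=\; (f^n)'(c_x)\,\bigl(f^n(x)-x\bigr),
\]
so that on $\{x:f^n(x)\neq x\}$ the function takes the simple form $F_{2n}(x)=(f^n)'(c_x)+1$. Continuity at any $x$ with $f^n(x)\neq x$ is then immediate since it is a ratio of continuous functions with nonvanishing denominator on a neighborhood.

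For the remaining case, let $x_0$ satisfy $f^n(x_0)=x_0$, so $F_{2n}(x_0)=(f^n)'(x_0)+1$ by definition. Consider any sequence $x_k\to x_0$ with $f^n(x_k)\neq x_k$ (the other case is trivial, being constant equal to $F_{2n}(x_0)$ along the subsequence by using the values at those points directly; more simply I can just evaluate along all approaches and observe the value at $x_0$ is built into the definition). By continuity of $f^n$, $f^n(x_k)\to f^n(x_0)=x_0$, so the intermediate point $c_{x_k}$, being trapped between $x_k$ and $f^n(x_k)$, also converges to $x_0$. Continuity of $(f^n)'$ then yields $(f^n)'(c_{x_k})\to(f^n)'(x_0)$, hence $F_{2n}(x_k)\to(f^n)'(x_0)+1=F_{2n}(x_0)$. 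This establishes continuity at $x_0$ and completes the proof.

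The only genuine obstacle is recognizing that the naive calculation fails at degenerate fixed points where $(f^n)'(x_0)=1$ (so the linear term of $f^n(x)-x$ vanishes at $x_0$); the MVT approach bypasses this issue without requiring any hypothesis beyond $C^1$, which is what makes it the right tool here.
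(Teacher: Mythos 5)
Your proof is correct and takes essentially the same approach as the paper: both rewrite $F_{2n}(x)$ so that the mean value theorem can be applied on the interval between $x$ and $f^n(x)$, producing an intermediate point $c_x$ which is squeezed to $x_0$ as $x\to x_0$ whenever $f^n(x_0)=x_0$, after which continuity of $(f^n)'$ finishes the argument. The only cosmetic difference is that you apply the MVT to $f^n$ itself while the paper applies it to $g=f^n-\mathrm{id}$ (shifting the additive constant from $+1$ to $+2$); these are algebraically identical.
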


\begin{proof}
Since $f$ is $C^1$, 
clearly $F_{2n}$ is continuous on $\{\,x\,|\, f^n(x)-x\ne0\,\}$ and 
on the interior of $\{\,x\,|\, f^n(x)-x=0\,\}$. 
We show that $F_{2n}$ is continuous at any point in 
$\overline{\{\,x\,|\, f^n(x)-x\ne0\,\}} \cap \{\,x\,|\, f^n(x)-x=0\,\}$. 

Let $p$ be an arbitrary point in 
$\overline{\{\,x\,|\, f^n(x)-x\ne0\,\}} \cap \{\,x\,|\, f^n(x)-x=0\,\}$. 
We write $g(x)=f^n(x)-x$. 
Then for $x\in\{\,x\,|\, f^n(x)-x\ne0\,\}$, 
\[F_{2n}(x) = \frac{g(x+g(x))-g(x)}{g(x)} +2 \ .\]
Since $g(x)\ne 0$ and $g$ is $C^1$ on $\BR$, by the mean value theorem, 
there is a $c_x\in (x,x+g(x))$ or $c_x\in (x+g(x),x)$ 
such that 
$$ \frac{g(x+g(x))-g(x)}{g(x)}=g'(c_x)\ .$$
When $x\to p$, we have $c_x \to p$ for $g(p)=0$. 
Since $g'(x)=(f^n)'(x)-1$ and it is continuous, 
\begin{eqnarray*}
\lim_{\scriptstyle x\to p \atop \scriptstyle f^n(x)\ne x} 
F_{2n}(x)
&=& 
\lim_{\scriptstyle x\to p \atop \scriptstyle f^n(x)\ne x} 
\left\{ \frac{g(x+g(x))-g(x)}{g(x)} +2 \right\} \\
&=& g'(p)+2 = (f^n)'(p)+1 = F_{2n}(p) \ .
\end{eqnarray*}
\end{proof}

In what follows, we fix a continuous full family 
of $C^1$ unimodal maps $\{ f_t \}_{t\in [0,1]}$. 
Similarly as in Proposition~\ref{prop:div-map}, 
we define its quotient map as follows.

\begin{definition}\label{rigorous_def_irreducible_map}
For any $m\in \BN$, 
we define a map $G_{m}:[0,1]\times \BR \to \BR$ as follows.
For odd $m=2n+1$, define 
$G_{2n+1}(t,x)=f_t^{2n+1}(x)-x$. 
For even $m=2n$, 
\[G_{2n}(t,x)=
\left\{ 
\begin{array}{ll}
\displaystyle\frac{f_t^{2n}(x)-x}{f_t^n(x)-x} & {\rm on\ } \ 
\{\,x\,|\, f_t^n(x)-x\ne0\,\} \\
\ & \ \\
(f_t^n)'(x)+1 & {\rm on\ } \ 
\{\,x\,|\, f_t^n(x)-x =0 \,\}
\end{array}\right.\]
We call $G_{m}$ the {\em quotient map} of 
$\{f_t\}_{t\in [0,1]}$ (although we do not divide $f_t^{m}(x)-x$ for odd $m$).
\end{definition}

Using this quotient map, we can define irreducible component of the bifurcation diagram as follows.

\begin{definition}\label{def_irreducible_map} 
\item[$(1)$] 
Let $m$ be a positive integer. 
We call each connected component $V$ of $G_m^{-1}(0)$ 
a {\em periodic point component} of $\{ f_t \}_{t\in [0,1]}$ , 
and the number 
$\max \{\, per(x)\,|\,  (t,x)\in V \,\} $ 
the {\em period} of $V$. 
\item[$(2)$] When $p$ is a periodic point of $f_t$ of minimal period $m$, 
we sometimes write that $(t,p)$ is a periodic point of minimal period $m$. 
\end{definition}

\begin{figure}[t]
\centerline{\includegraphics[width=13cm]{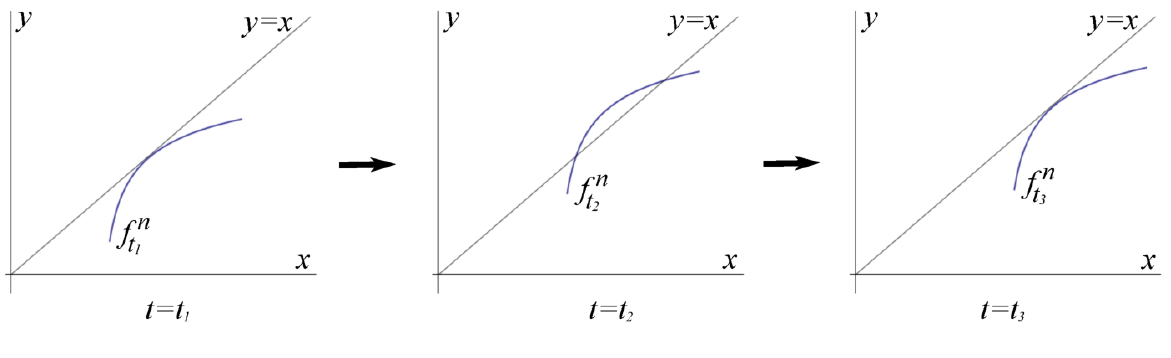}}
\caption{A process of making a closed component} 
\label{fig:graphs_t1t2t3}
\end{figure}

\begin{remark}\label{rem:periodic-point-component}
{\rm
\item[$(1)$] It is clear that $G_m^{-1}(0)$ is a subset of 
the bifurcation diagram of $\{ f_t \}$ of period $m$. 
\item[$(2)$] By the definition of $G_m$, 
$G_m^{-1}(0)$ contains all periodic points of the minimal period $m$, 
and possibly certain periodic points of lower period, 
but the period must be a divisor of $m$. 
\item[$(3)$]
If $m$ is odd, for any divisor $k$ of $m$ (including $1$), 
components of period $k$ are contained in $G_m^{-1}(0)$. 
When $m$ is even, 
$G_m^{-1}(0)$ can contain a periodic point $(t,x)$ of period $m/2$ 
(and its divisors) only when $( f_t^{m/2} )'(x)=-1$. 
\item[$(4)$]
Note that $G_m^{-1}(0) \subset [0,1] \times [-M,M] $, 
by Definition~\ref{def:full-family} (iii). 
Therefore, $G_m^{-1}(0)$ 
is a bounded set. 
By Proposition~\ref{prop:div-map}, 
$G_m$ is continuous as a function of $x$. 
We will show that $G_m$ is continuous on $[0,1]\times \BR$ 
in Proposition~\ref{prop:continuity-of-G2n}. 
Therefore, in $[0,1] \times \BR$, 
$G_m^{-1}(0)$ and each periodic point component are compact.  
\item[$(5)$]
\lq\lq Periodic point component" in Definition 5(1) is exactly what 
we called irreducible component. 
We did not define the term \lq\lq irreducible component" rigrously, but used it 
in the similar meaning in algebraic geometry. 
The reason why the periodic point component is suitable for being called irreducible 
is the following.

Let $V$ be a periodic point component of period $m$. 
We will see in Proposition~\ref{prop:k-is-the-half-of-n} that 
if the period of a point on $V$, say $k$, 
is smaller than $m$ then $k=m/2$. 
When $m$ is odd, the period of any point on $V$ is $m$, 
and $V$ does not intersect with components of lower periods. 
When $m$ is even, 
if the period of a component is not $m/2$, 
then it cannot intersect with $V$. 
And also the branchs of period $m/2$ are almost removed by the division 
(except for points $x$ such that $(f_t^n)'(x)=-1$). 
Thus, the periodic point component may be called irreducible. 

But there is still a question. 
Is there a possibility that period $m$ components of \lq\lq different types" 
intersect ?
That is exactly the main theme of this paper. 
We show that such intersections do not occur. 
\item[$(6)$]
There is a possibility of existence of many small periodic point components. 
If the shape of a part of the graph of $f_t^n$ changes as in 
Figure~\ref{fig:graphs_t1t2t3} when the parameter $t$ increases, 
then a small closed component appears as in Figure~\ref{fig:closed_component}. 
\begin{figure}[t]
\centerline{\includegraphics[width=5cm]{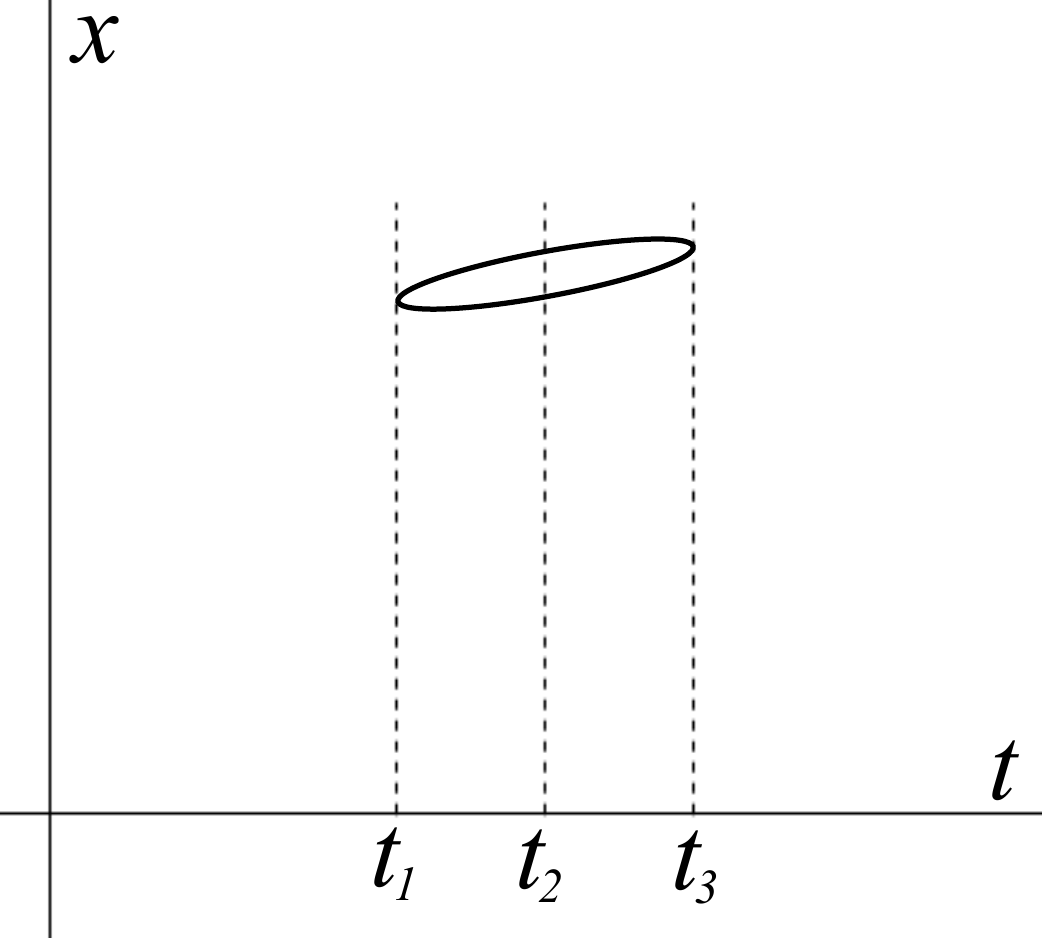}}
\caption{A closed component} 
\label{fig:closed_component}
\end{figure}

Suppose that the graph of $f_t^n$ has a wave intersecting to the line $y=x$ 
and converging to a point as in Figure~\ref{fig:infinite-waves-and-converging-components}(a). 
When $t$ increases, if the shape of this wave changes in a certain way,
then there is a possibility of existence of converging small component
to some component $V$ as in Figure~\ref{fig:infinite-waves-and-converging-components}(b). 
Note that the shapes of those small components may be a more deformed one,
and there may be more and more such small components in many places.
\begin{figure}[t]
\centerline{\includegraphics[width=11cm]{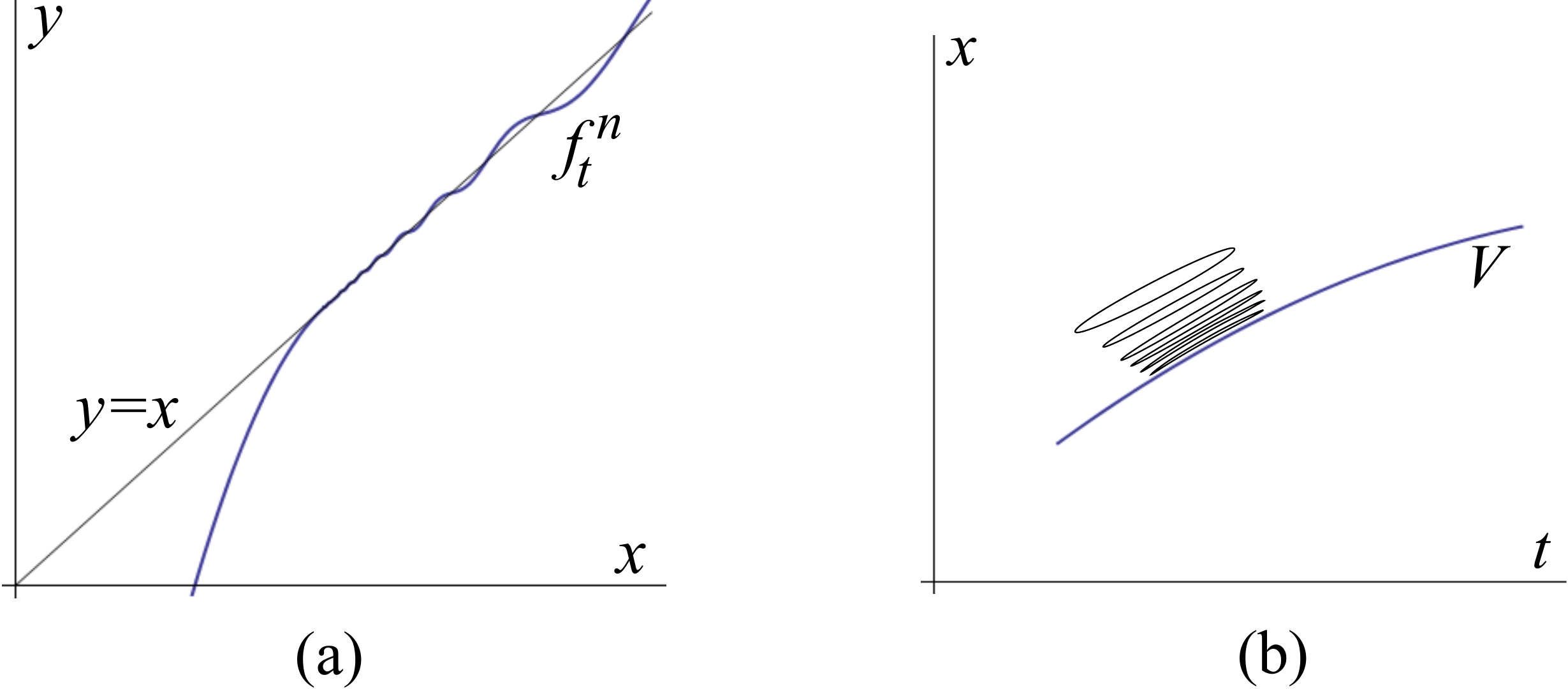}}
\caption{Converging infinite waves and converging infinite components} 
\label{fig:infinite-waves-and-converging-components}
\end{figure}

Our definition of unimodal map is so general that it allows
an interval of periodic points.
In such a case, we can not exclude non-pathwise connected 
periodic point component as shown in Figure~\ref{fig:non-pathwise-connected-component}. 
\begin{figure}[t]
\centerline{\includegraphics[width=5cm]{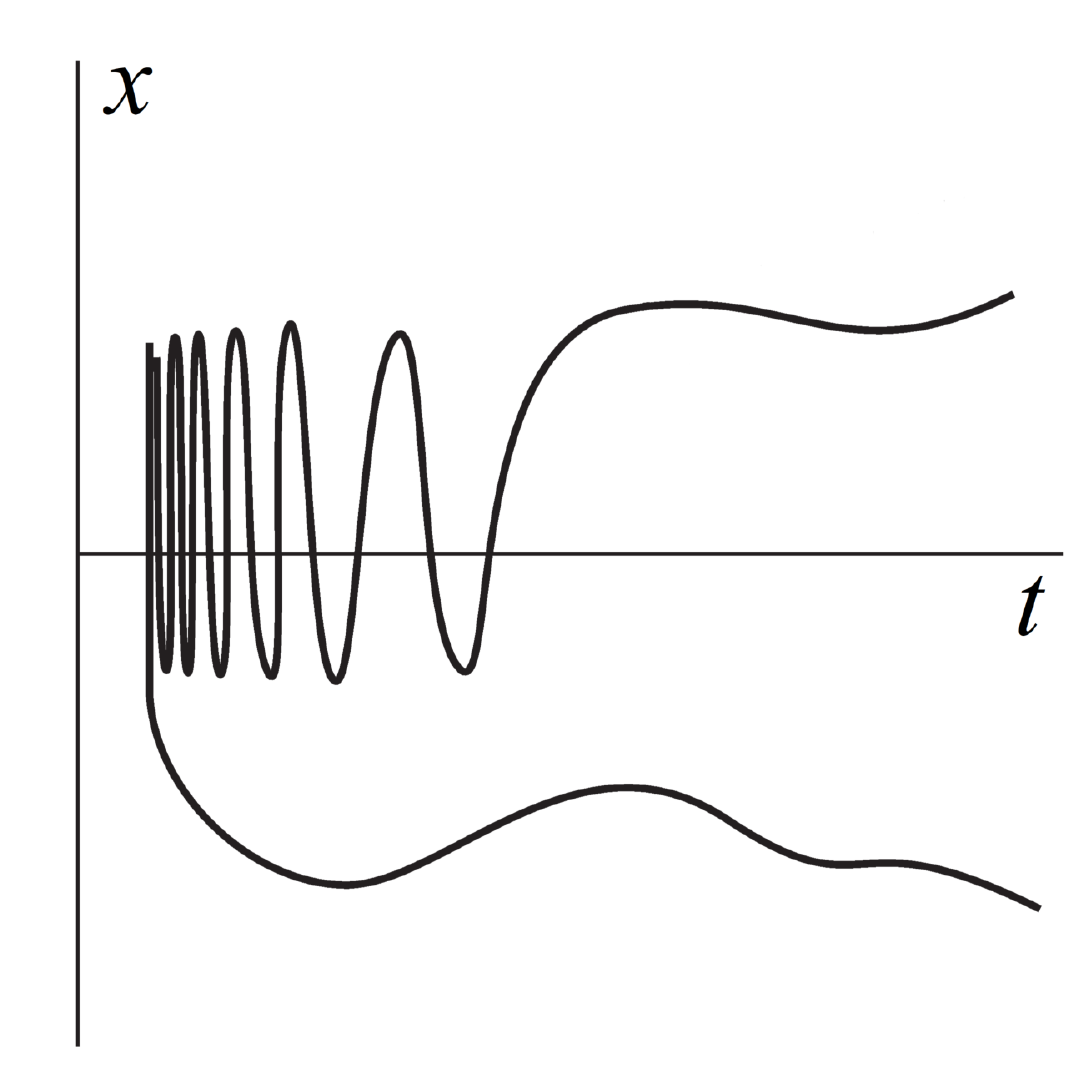}}
\caption{A non-pathwise connected component} 
\label{fig:non-pathwise-connected-component}
\end{figure}
Thus, the bifurcation diagram can be a very wild one. 
That is why we need a quite complicated argument of general topology to prove our main theorem.
}
\end{remark}

The continuity of $G_{m}$ is essential.

\begin{proposition}\label{prop:continuity-of-G2n}
$G_{m}$ is continuous. 
\end{proposition}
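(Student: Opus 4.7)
The plan is to mimic the proof of Proposition~\ref{prop:div-map}, now tracking continuity jointly in the pair $(t,x)$. For odd $m = 2n+1$ there is nothing extra to do: a straightforward induction using the joint continuity of $f^0(t,x) = f_t(x)$ (Definition~\ref{def:continuous-family}) shows that $(t,x)\mapsto f_t^k(x)$ is continuous on $[0,1]\times\BR$ for every $k$, so $G_{2n+1}(t,x) = f_t^{2n+1}(x) - x$ is continuous.

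For even $m = 2n$, I would first record that $(t,x)\mapsto (f_t^n)'(x) = \prod_{k=0}^{n-1} f_t'\bigl(f_t^k(x)\bigr)$ is jointly continuous on $[0,1]\times\BR$; this uses the same induction together with the joint continuity of $(t,y)\mapsto f_t'(y) = f^1(t,y)$. With that in hand, continuity on the open set $U = \{(t,x) : f_t^n(x)\ne x\}$ is immediate from the quotient formula (nonvanishing denominator), and continuity on the interior of its complement is immediate from the formula $(f_t^n)'(x)+1$.

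The only substantive case is a point $(t_0, x_0)$ with $f_{t_0}^n(x_0) = x_0$ that is approached by points of $U$. Writing $g_t(x) = f_t^n(x) - x$, for $(t,x)\in U$ the identity $f_t^{2n}(x) = f_t^n\bigl(x+g_t(x)\bigr)$ yields
\[
G_{2n}(t,x) \;=\; \frac{g_t\bigl(x+g_t(x)\bigr) - g_t(x)}{g_t(x)} + 2.
\]
Applying the mean value theorem to the $C^1$ function $g_t$ at fixed $t$ produces $c_{t,x}$ between $x$ and $x+g_t(x)$ with $G_{2n}(t,x) = g_t'(c_{t,x}) + 2 = (f_t^n)'(c_{t,x}) + 1$. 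As $(t,x)\to (t_0,x_0)$ through $U$, joint continuity of $g_t(x)$ gives $g_t(x)\to 0$, so $c_{t,x}\to x_0$, and the joint continuity of $(t,x)\mapsto (f_t^n)'(x)$ established above yields $G_{2n}(t,x) \to (f_{t_0}^n)'(x_0)+1 = G_{2n}(t_0,x_0)$.

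The main obstacle, which is mild, is making sure the continuities appealed to are genuinely joint rather than separate in $t$ and $x$: the MVT is invoked at fixed $t$, but because $c_{t,x}$ is trapped between $x$ and $x+g_t(x)$ it converges to $x_0$ under joint convergence $(t,x)\to(t_0,x_0)$, and joint continuity of $(f_t^n)'$ closes the argument. Apart from this bookkeeping, the computation is an exact parameter-parallel of Proposition~\ref{prop:div-map}.
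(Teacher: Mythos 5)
Your proof is correct and follows essentially the same route as the paper's: split the domain into $\{f_t^n(x)\ne x\}$, the interior of its complement, and the boundary, then at a boundary point use the mean value theorem at fixed $t$ and the joint continuity of $(t,x)\mapsto (f_t^n)'(x)$ to pass to the limit. The only cosmetic difference is that you apply the MVT to $g_t=f_t^n-\mathrm{id}$ (mirroring the single-map Proposition~\ref{prop:div-map}) rather than to $f_t^n$ directly as the paper does here, which yields the same expression $(f_t^n)'(c_{t,x})+1$.
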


\begin{proof}
It is clear that $G_{2n+1}$ is continuous. 
We prove that $G_{2n}$ is continuous. 
We write $W=\{\,(t,x)\,|\, f_t^n(x)-x=0\,\} \subset [0,1]\times \BR$. 
It is clear that on $W^c=\{\,(t,x)\,|\, f_t^n(x)-x\ne 0\,\}$, 
$G_{2n}$ is continuous by the definition of $G_{2n}$. 
Also on the interior of $W$, $G_{2n}|_W$ is continuous, because 
$G_{2n}(t,x)=(f_t^n)'(x)+1$ on $W$ and $f_t$ is $C^1$. 
We shall prove that $G_{2n}$ is continuous at 
$(s,p)\in W \cap \overline{W^c}$. 
We have to show that when $(t,x)\to (s,p)$ for $(t,x)\in W^c$, 
$G_{2n}(t,x)\to G_{2n}(s,p)=(f_s^{n})'(p)+1$. 

Let $h_t(x)=f_t^n(x)-x$. Then
\begin{eqnarray*}
G_{2n}(t,x)
&=& 
\displaystyle\frac{f_t^{2n}(x)-x}{f_t^n(x)-x} \\
&=&
\displaystyle\frac{f_t^{n}\left( f_t^n(x)-x +x \right) -f_t^n(x)+ f_t^n(x)-x}{h_t(x)} \\
&=& 
\displaystyle\frac{f_t^{n}(x+h_t(x))- f_t^n(x)}{h_t(x)} \, + \, 1\\
\end{eqnarray*}
for $(t,x)\in W^c$. 
By the mean value theorem,
there exists a $c(t,x)\in (x,x+h_t(x))$ or $c(t,x)\in (x+h_t(x),x)$ such that 
\[\displaystyle\frac{f_t^{n}(x+h_t(x))- f_t^n(x)}{h_t(x)} =(f_t^n)'(c(t,x))\]
for $f_t^n$ is $C^1$. 
When $(t,x)\to (s,p)$, 
$(f_t^n)'(c(t,x))\to (f_s^n)'(p)$, because $h_s(p)=0$ and 
$g^1(t,x):=(f_t^{n})' (x)$ is continuous by Definition~\ref{def:continuous-family}. 
That means 
$G_{2n}(t,x)\to G_{2n}(s,p)=(f_s^{n})'(p)+1$. 
\end{proof}
\

The following are important properties of 
points of the half period of periodic point component.

\begin{proposition}\label{prop:diff-is-minus-one} 
Let $V$ be a periodic point component of period $2n$, 
and $V_n=\{ (t,x) \in V| f^n_t(x) - x = 0 \}$. Then, 
\begin{description}
\item[$(1)$] $(f^n_t)'(x)=-1$ for any $(t,x)\in V_n$. 
\item[$(2)$] For any $t\in [0,1]$, $V_n\cap \left( \{t\}\times \BR \right)$ is 
a finite set.
\end{description}
\end{proposition}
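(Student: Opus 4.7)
For part (1), the idea is to read off the conclusion almost directly from the definition of $G_{2n}$. Since $V \subset G_{2n}^{-1}(0)$ and $V_n$ is exactly the subset of $V$ where $f_t^n(x)-x = 0$, the second branch of the piecewise definition of $G_{2n}$ applies on $V_n$. Concretely, for $(t,x)\in V_n$ I have $0 = G_{2n}(t,x) = (f_t^n)'(x)+1$, so $(f_t^n)'(x) = -1$. This takes one line and needs no work beyond unpacking Definition~\ref{rigorous_def_irreducible_map}.

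For part (2), the plan is to combine the derivative information from (1) with the compactness of $V$ recorded in Remark~\ref{rem:periodic-point-component}(4). Fix $t\in[0,1]$ and write $h_t(x) = f_t^n(x)-x$. By (1), at every $(t,x)\in V_n$ one has $h_t'(x) = (f_t^n)'(x) - 1 = -2$. Since $h_t'$ is continuous in $x$ (from Definition~\ref{def:continuous-family}), for each such point there is an open interval about $x$ on which $h_t' < 0$; hence $h_t$ is strictly decreasing through zero there, and so $x$ is an isolated zero of $h_t$.

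Now I would observe that $V_n \cap (\{t\}\times\BR)$ is closed in $V\cap(\{t\}\times\BR)$ because it is cut out by the continuous condition $f_t^n(x)-x = 0$; since $V$ is compact, this slice is a compact subset of $\BR$. Each of its points is isolated in this subset (in fact isolated already as a zero of $h_t$, by the previous paragraph), and a compact set all of whose points are isolated is finite. This completes (2).

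I do not foresee a serious obstacle here: the only step that needs a little care is invoking the right notion of compactness for the $t$-slice, which follows immediately from compactness of $V$ and the fact that $V_n$ is closed in $[0,1]\times\BR$ as the preimage of $0$ under the continuous map $(t,x)\mapsto f_t^n(x)-x$. The conceptual content is entirely that $V_n$ consists of transverse zeros of $h_t$ with slope $-2$, and isolated zeros in a compact set must be finite in number.
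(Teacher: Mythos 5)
Your proof is correct and takes essentially the same route as the paper: part (1) is read off from the definition of $G_{2n}$, and part (2) uses the derivative condition $(f_t^n)'(x)=-1$ to conclude that each point of the slice is an isolated zero of $h_t(x)=f_t^n(x)-x$, then compactness (of $V$, hence of $V_n$ and its $t$-slice) to conclude finiteness. The paper phrases the final step as ``an infinite compact set has an accumulation point, which contradicts isolation,'' while you phrase it as ``a compact discrete set is finite''; these are the same argument.
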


\begin{proof}
(1) follows from the definition of $G_{2n}$. 

By Proposition~\ref{prop:continuity-of-G2n}, 
$V$ is compact. Since $V_n$ is a closed subset of $V$, $V_n$ is also compact. 
We fix a $t\in [0,1]$. 
If $V_n\cap \left( \{t\}\times \BR \right)$  has an infinite number of points, 
then there is an accumulate point $(t,p)\in V_n$. 
However from (1), $(f^n_t)'(p)=-1$ and therefore $p$ is isolated in the 
set $\{x\in \BR| f^n_t(x) - x = 0\}$. That is a contradiction. 
\end{proof}

\section{Symbolic condition and the statement of Theorem~\ref{main-theorem}}\label{sec-statement}

In order to formulate our results rigorously, 
we need to give the symbolic condition for periodic points 
to be contained in the same periodic point component. 
First, let us recall some definitions and results on 
the kneading theory. 
There are two languages for kneading theory. 
One is Milnor-Thurston's invariant coordinate \cite{MT}, 
and another is Collet-Eckmann's $RL$-method \cite{CE} which is 
originated in \cite{MSS}. 
They are essentially equivalent. 
In this paper, we employ Collet-Eckmann's method because 
it is easier to imagine. 

Let $f$ be a unimodal map and $x\in\BR$. 
The {\em itinerary} $I(x,f)=A_0A_1A_2\cdots$ of $x$ for $f$ is 
the sequence of symbols $L$, $R$ and $C$ such that, 
\[ A_i= \left\{ \begin{array}{ll}
    L\quad & \mbox{if} \quad f^i(x)<0 \\
    R\quad & \mbox{if} \quad f^i(x)>0 \\
    C\quad & \mbox{if} \quad f^i(x)=0 
   \end{array} \right.  \]
When the symbol $C$ appears, the subsequent sequence is just 
the itinerary of $0$.
We shall omit the sequence after $C$. 
An infinite sequence of $R$ and $L$, 
or a finite sequence of $R$ and $L$ followed by $C$ 
is called an {\em admissible sequence}.
Thus, an itinerary is an admissible sequence.
In particular, the itinerary of the critical value $I(f(0),f)$ is 
called the {\em kneading sequence} of $f$ and denoted by $K(f)$.
As in \cite{CE}, 
we will write just $I(x)$ instead of $I(x,f)$, 
when the map acting on $x$ is clear in the context. 

A finite sequence of symbols $R$ and $L$ is 
called {\em even} ( resp. {\em odd} ) 
if it has an even ( resp. odd ) number of $R$'s. 
We define a natural ordering among admissible sequences. 
Let $A=A_0A_1A_2\cdots$ and $B=B_0B_1B_2\cdots$ be 
admissible sequences. 
We say $A<B$ if either $A_0\cdots A_{n-1}=B_0\cdots B_{n-1}$ 
is even and $A_n<B_n$, 
or it is odd and $A_n>B_n$, 
where we define $L<C<R$.
Let $\mathcal{S}(A_0A_1A_2\cdots)=A_1A_2\cdots$ be the shift map 
for admissible sequences. 
$\mathcal{S}(C)$ is not defined. 
It is clear that for any $x\in \BR$,
$\mathcal{S}(I(x))=I(f(x))$.

A sequence $A$ is called {\em maximal} if 
$\mathcal{S}^n(A)\le A$ for all $n\ge0$. 
The order of itineraries as admissible sequences is 
closely related to the order of corresponding points.

\begin{proposition}\label{prop:same-order}{\bf \cite{CE}} 
Let $f$ be a unimodal map. 
\begin{description}
\item[$(1)$] If $I(x)<I(x')$, then $x<x'$. 
\item[$(2)$] If $x<x'$, then $I(x)\le I(x')$. 
\end{description}
\end{proposition}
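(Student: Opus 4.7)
The plan is to prove (2) first by bookkeeping on the common prefix of the two itineraries, after which statement (1) will follow by a short contrapositive argument. For (2), I suppose $x < x'$ and write $I(x) = A_0 A_1 \cdots$ and $I(x') = B_0 B_1 \cdots$. If the sequences agree, the conclusion is immediate, so let $n$ be the smallest index at which they differ. I first observe that for every $i < n$ the common symbol $A_i = B_i$ must lie in $\{L, R\}$: otherwise $A_i = C$ would force $f^i(x) = 0 = f^i(x')$, hence $f^j(x) = f^j(x')$ for every $j \ge i$, contradicting minimality of $n$. Consequently, for each $i < n$ the points $f^i(x)$ and $f^i(x')$ lie strictly on the same side of $0$.

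The central observation is then a monotonicity fact: since $f$ is strictly increasing on $(-\infty, 0)$ and strictly decreasing on $(0, \infty)$, each application of $f$ preserves or reverses the order of a pair of points on the same side of $0$ according as that side is the negative or the positive half-line. Iterating this $n$ times, I obtain $f^n(x) < f^n(x')$ if the prefix $A_0 \cdots A_{n-1}$ is even and $f^n(x) > f^n(x')$ if it is odd; strictness is preserved throughout because $f$ is injective on each half-line.

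Reading off the symbols $A_n$ and $B_n$ from the signs of $f^n(x)$ and $f^n(x')$ via the convention $L < C < R$, a short case check shows $A_n < B_n$ in the even-prefix case and $A_n > B_n$ in the odd-prefix case; by the definition of the order on admissible sequences, both conclusions give $I(x) < I(x')$, which proves (2) in its strong form. Part (1) is then the contrapositive: the hypothesis $I(x) < I(x')$ rules out $x = x'$ and, by (2) with the roles of $x$ and $x'$ swapped, also rules out $x > x'$. The main subtlety I expect to need to handle carefully is the interplay between the $C$ symbol and the parity flip induced by each $R$ in the prefix, namely verifying that whenever one of $A_n$, $B_n$ equals $C$ the inequality on $f^n$ still translates to the correct order $L < C < R$; once this edge case is treated cleanly, the rest is routine.
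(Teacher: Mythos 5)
The paper offers no proof of this proposition — it cites it directly to Collet--Eckmann [CE] — so there is no in-paper argument to compare against. Your argument is the standard one and is correct: a common prefix of $L$'s and $R$'s (no $C$'s can occur before the first disagreement) forces each $f^i(x)$, $f^i(x')$ to sit strictly on one side of the critical point, each $L$ preserves the order and each $R$ reverses it by strict monotonicity of $f$ on the two half-lines, so after $n$ iterates $f^n(x)$ and $f^n(x')$ are ordered according to the parity of the prefix, and the convention $L<C<R$ is exactly the sign order, so the symbol comparison at index $n$ goes the right way. The contrapositive derivation of (1) from (2) is sound, noting that $x = x'$ gives $I(x)=I(x')$ and $x > x'$ gives $I(x') \le I(x)$ by (2).
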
 

By this proposition, 
we see that if $I(x)$ is maximal then $x\geq f^i(x)$ for any $i\in \BN$,
namely, $x$ is the biggest among the points of its orbit.

Let $f:[a,b] \to \BR$ be a horseshoe as in Definition 1, 
i.e. $a$ is a fixed point, $f(a)=f(b)=a$ and
there are $a_1$ and $b_1$ such that
$a<a_1<b_1<b$, $f(a_1)=f(b_1)=b$ and 
$|f'(x)|>1$ for any $x\in[a,a_1]\cup[b_1,b]$.
Then, it is a basic fact that $I:\Lambda(f)\to \Pi$ gives a topological conjugacy
between $f:\Lambda(f)\to \Lambda(f)$ and $\mathcal{S}:\Pi \to \Pi$,
where $\Lambda(f)=\bigcap_{n\geq 0}f^{-n}\left( [a,b] \right)$
and $\mathcal{S}:\Pi\to \Pi$ is the one-sided full shift of symbols $L$ and $R$.
In our case, $f_1$ is a horseshoe.
Therefore, there is a one-to-one correspondence between periodic points of $f_1$
and periodic sequences of $\mathcal{S}:\Pi\to \Pi$.

Now we define a map $\mu$ from the set of all periodic admissible 
sequences to itself. 
It plays a key role in our argument.

\begin{definition}  
Let $A=(A_1A_2\cdots A_n)^\infty$ be a periodic admissible sequence 
of minimal period $n$. 
By some shifts, 
$\mathcal{S}^m(A)=(A_{m+1}A_{m+2}\cdots A_{m+n})^\infty$ (~$0\le m\leq {n-1}$~) 
is a maximal sequence. 
We define 
\[\mu(A)=
\mathcal{S}^{n-m}(A_{m+1}A_{m+2}\cdots 
A_{m+n-1}{\overline A}_{m+n})^\infty\]

where ${\overline L}=R$ and ${\overline R}=L$, 
and we assume that $A_{n+i}=A_i$ for any $i$. 
\end{definition}

Note that in the definition of $\mu$, 
minimal period of $A$ is important. 
Since there is a unique $m$ ($0\le m \le n-1$) 
such that $(A_{m+1}A_{m+2}\cdots A_{m+n})^\infty$ is maximal, 
this definition is well-defined. 
It is clear that;

\begin{proposition}\label{prop:shift-of-mu} 
Let $A$ be a periodic admissible sequence. 
For any $k\in\BN$, 
$\mu(\mathcal{S}^k(A)) = \mathcal{S}^k(\mu(A))$. 
\end{proposition}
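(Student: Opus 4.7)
The plan is to observe that $\mu(A)$ depends only on the orbit $\{\mathcal{S}^i(A)\}_{i\ge 0}$ together with a canonical choice of starting position in the cycle, and that this data behaves covariantly under the shift. Since $A$ has minimal period $n$, the $n$ sequences $\mathcal{S}^0(A), \mathcal{S}^1(A), \ldots, \mathcal{S}^{n-1}(A)$ are pairwise distinct, so the index $m \in \{0,1,\ldots,n-1\}$ for which $\mathcal{S}^m(A)$ is maximal is uniquely determined. Writing $B_i = A_{m+i}$ and $B^* = (B_1\cdots B_{n-1}\overline{B}_n)^\infty$, the definition of $\mu$ reads $\mu(A) = \mathcal{S}^{n-m}(B^*)$.

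Next I would note that $\mathcal{S}^k(A)$ is again periodic of minimal period $n$, so it too admits a unique maximal shift index $m' \in \{0,\ldots,n-1\}$. Uniqueness of the maximal shift forces $\mathcal{S}^{m'+k}(A) = \mathcal{S}^m(A)$, hence $m' + k \equiv m \pmod n$. The crucial consequence is that the maximal representative of $\mathcal{S}^k(A)$ equals $(B_1 B_2 \cdots B_n)^\infty$ symbol by symbol (using $n$-periodicity of $A$), so the flipped sequence appearing in the definition of $\mu(\mathcal{S}^k(A))$ is literally the same $B^*$.

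Combining these observations yields
\[
\mu(\mathcal{S}^k(A)) = \mathcal{S}^{n-m'}(B^*), \qquad \mathcal{S}^k(\mu(A)) = \mathcal{S}^{k+n-m}(B^*).
\]
Since $B^*$ admits $n$ as a period, $\mathcal{S}^n$ acts trivially on it, so it suffices to verify $n - m' \equiv k + n - m \pmod n$, i.e., $m - m' \equiv k \pmod n$, which is precisely the relation obtained above. The two sides therefore coincide.

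The one subtle point is the uniqueness of the maximal shift, which really does require that $A$ has \emph{minimal} period $n$ rather than merely some period $n$; without this, the symbol $\overline{B}_n$ to be flipped would not be unambiguously defined, and $\mu$ itself would fail to be well-defined. Once this is in hand, the proof is pure index bookkeeping and should not present further difficulty.
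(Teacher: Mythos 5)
Your proof is correct. The paper does not actually provide a proof of this proposition (it simply states ``it is clear that''), so your index bookkeeping argument — identifying the maximal shift index $m'$ of $\mathcal{S}^k(A)$ via $m'+k\equiv m\pmod n$, observing that the flipped maximal block $B^*$ is the same for $A$ and $\mathcal{S}^k(A)$, and reducing exponents modulo $n$ using that $B^*$ has period $n$ — supplies exactly the verification the authors left implicit.
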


If $A=(A_1 \cdots A_n)^\infty$ is maximal, 
then $\mu(A)=(A_1 \cdots A_{n-1} \overline{A_n})^\infty$.
As mentioned above,
if  $I(x)=A=(A_1 \cdots A_n)^\infty$ for a periodic point $x$ of $f$ is maximal, 
then $x$ is the biggest among its orbit.
The symbol $A_n$ corresponds to the previous point of the orbit of $x$ i.e. $f^{n-1}(x)$. 
It is located near the critical point $0$ and it moves either from left to right or from right to left
passing through $0$ after either saddle-node bifurcation or period doubling bifurcation.
That is the meaning of the map $\mu$.
If we identify those periodic sequences with periodic points of a unimodal map 
having the sequences as their itineraries, 
then $\mu$ maps the periodic point to the twin  
which was born at the same time through a saddle-node bifurcation, 
in the case where $per(\mu(A))=per(A)$. 
In some cases, $per(\mu(A))$ can be smaller than $per(A)$. 
It is clear that it must be a divisor of $n$, 
but actually it must be exactly $n/2$ 
when it is really smaller than $n$. 
The following is the proof, 
but we give a more general statement for later use.

\begin{proposition}\label{prop:take-mu}
Let $A=(A_1\cdots A_n)^{\infty}$ be a periodic admissible sequence of period $n$ 
(not necessarily of minimal period $n$). 
\begin{description}
\item[$(1)$] 
If $A=(A_1\cdots A_n)^{\infty}$ is maximal, 
then the minimal period of
\newline
$\widetilde{A}=(A_1\cdots A_{n-1}\overline{A_n})^{\infty}$ 
is either $n$ or $n/2$. 
\item[$(2)$] If $A=(A_1\cdots A_n)^{\infty}$ is maximal, then 
either $per(A)$ or $per(\widetilde{A})$ is $n$. 
\item[$(3)$] If $per(A)=n$, then $per(\mu(A))$ is either $n$ or $n/2$. 
\end{description}
\end{proposition}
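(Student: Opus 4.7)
The plan is to derive (1), (2), and (3) from a single structural analysis of when $\widetilde{A}$ can have minimal period strictly less than $n$. I prove (1) first; (2) and (3) follow quickly. I split the proof of (1) on $per(A)$, starting with a purely combinatorial lemma that does not require maximality: if $per(A) = k < n$ and $per(\widetilde{A}) = k' < n$, we reach a contradiction. Since $A$ and $\widetilde{A}$ agree on positions $\{1,\dots,n-1\}$ and differ exactly at multiples of $n$, their difference pattern has minimal period $n$ and is also $\mathrm{lcm}(k,k')$-periodic; together with $k,k' \mid n$, this forces $\mathrm{lcm}(k,k') = n$. The map $\BZ/n\BZ \hookrightarrow \BZ/k\BZ \times \BZ/k'\BZ$ is then a bijection onto $\{(a,b) : a \equiv b \pmod d\}$ where $d = \gcd(k,k')$, and the agreement gives $A_a = \widetilde{A}_b$ for every such pair except $(k,k')$. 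Viewed as a bipartite graph on the residue class of $0$ modulo $d$, this is the complete bipartite graph $K_{k/d,\,k'/d}$ with the single edge $(k,k')$ removed. Because $\mathrm{lcm}(k,k') = n$ together with $k,k' < n$ rules out $k \mid k'$ and $k' \mid k$, both $k/d$ and $k'/d$ are at least $2$, so the graph remains connected, yielding $A_k = \widetilde{A}_{k'}$ and contradicting the identity $A_k = \overline{\widetilde{A}_{k'}}$ at the exceptional position.

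Hence, if $A$ is maximal and $per(\widetilde{A}) = k' < n$, then necessarily $per(A) = n$. The agreement on $\{1,\dots,n-1\}$ together with the forbidden period-$k'$ extension at position $n$ then forces the rigid structure $A = B^{q-1}B'$, where $B = A_1 \cdots A_{k'}$, $B' = A_1 \cdots A_{k'-1}\overline{A_{k'}}$, and $q = n/k' \geq 2$. Now maximality enters. For each $j \in \{1, \dots, q-1\}$, the shift $\mathcal{S}^{jk'}(A)$ agrees with $A$ on the first $(q-j)k' - 1$ positions and at position $(q-j)k'$ carries $\overline{A_{k'}}$ while $A$ carries $A_{k'}$. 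A short case check on whether $A_{k'} = L$ or $R$ shows that the requirement $\mathcal{S}^{jk'}(A) \leq A$ is equivalent, independently of that value, to the parity of $A_1 \cdots A_{(q-j)k'}$ being odd, i.e.\ to $(q-j)p$ being odd, where $p$ is the parity of the block $B$. Since this must hold for every $j \in \{1,\dots,q-1\}$ and $q-1, q-2$ have opposite parities, the set $\{1,\dots,q-1\}$ can contain at most one index, forcing $q \leq 2$. Combined with $q \geq 2$ we obtain $q = 2$ and $k' = n/2$, completing (1).

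For (2), the combinatorial lemma above already says that $per(A) < n$ and $per(\widetilde{A}) < n$ cannot both hold, so irrespective of maximality one of them equals $n$. For (3), observe that $\mu(A) = \mathcal{S}^{n-m}(\widetilde{A'})$ where $A' = \mathcal{S}^m(A)$ is the maximal shift of $A$; shifts preserve the minimal period, so $per(\mu(A)) = per(\widetilde{A'})$, and since $A'$ is a maximal admissible sequence with $per(A') = n$, applying (1) to $A'$ gives $per(\mu(A)) \in \{n, n/2\}$. The main obstacle I anticipate is the parity bookkeeping in the maximality step: verifying that the case analysis on $A_{k'} \in \{L,R\}$ really collapses to the single uniform condition ``$(q-j)p$ odd'', so that varying $j$ produces the contradiction for $q \geq 3$. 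Everything else is clean CRT-style combinatorics or shift-invariance.
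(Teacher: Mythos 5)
Your proof is correct, and it reverses the logical order that the paper uses. The paper proves~(1) directly — assuming $per(\widetilde A)<n/2$, it writes $\widetilde A=B^k$ with $k\ge 3$, lets $A=B^{k-1}\widetilde B$, and extracts the two shift inequalities $BB>B\widetilde B$ and $B>\widetilde B$, whose parity conditions contradict each other — and then deduces~(2) from~(1) by a bespoke case analysis specific to $per(\widetilde A)=n/2$ (showing $n/k$ odd, splitting $D=D'D''$, and concluding $A=\widetilde A$). You instead isolate what amounts to a strengthened form of~(2) as a purely combinatorial lemma, independent of maximality: if both $per(A)=k<n$ and $per(\widetilde A)=k'<n$, then $\mathrm{lcm}(k,k')=n$, and the CRT embedding $\BZ/n\BZ\hookrightarrow\BZ/k\BZ\times\BZ/k'\BZ$ turns the agreement on $\{1,\dots,n-1\}$ into a connected bipartite graph of equalities forcing $A_k=\widetilde A_{k'}$, contradicting the flip at position $n$. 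You then feed this into~(1): with $per(A)=n$ secured, the rigid structure $A=(B^{q-1}B')^\infty$ and the shifts $\mathcal{S}^{jk'}(A)$ collapse to the single uniform condition ``$(q-j)p$ odd,'' from which $q=2$ follows by the parity clash at $j=1,2$. The trade-offs: your lemma is more general (no maximality, handles all $k'<n$ at once) and makes~(2) essentially trivial, whereas the paper's~(1) argument is more economical — it needs only two shifts and avoids the CRT/graph machinery; in exchange, the paper's~(2) proof is ad hoc while yours is uniform. Part~(3) is handled identically in both. One small imprecision worth tightening in a write-up: the phrase ``the set $\{1,\dots,q-1\}$ can contain at most one index'' should be split by the parity of $p$ — if $p$ is even, no $j$ satisfies the condition (so $q\le 1$, impossible); if $p$ is odd, only $j=q-1$ can (so $q\le 2$) — but the conclusion $q=2$ is unaffected.
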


\begin{proof} 
\item[(1)] 
Suppose that the minimal period of $\widetilde{A}=(A_1\cdots A_{n-1}\overline{A_n})^{\infty}$ 
is smaller than $n/2$. 
Then it is written as 
$A_1\cdots A_{n-1}{\overline A}_n=B\cdots B$, 
where $B$ is a finite sequence of $R$ and $L$. 
We denote the length of $B$ and the number of $B$'s 
by $m$ and $k$ respectively, then $k\ge3$ and $km=n$.
Let $B=B_1\cdots B_m$ and ${\widetilde B}=B_1\cdots B_{m-1}{\overline B}_m$. 
Then $A_1\cdots A_n=B\cdots B{\widetilde B}$, where the number of 
$B$'s is $k-1$. 
Since $A$ is maximal and $k-1\ge2$, by shifting $A$, we have 
$BB>B{\widetilde B}$ and $B>{\widetilde B}$.  
The second inequality means that $B$ is odd. 
However that contradicts the first inequality. 
\item[(2)] Suppose that $per(\widetilde A)\ne n$. 
Then by (1), $per(\widetilde A)=n/2$, and therefore we can write 
$A_1\cdots A_{n-1}{\overline A}_n=BB$ for some finite sequence $B$ of 
$R$ and $L$. 
Note that the $n$ is even in this case. 

Assume that $per(A)=k \ne n$. 
Firstly, we claim that $n/k$ is odd. 
Because if $n/k$ is even, we can write $A=D \cdots D$ where 
$D$ is a sequence of $R$ and $L$ of length $k$, and the number of $D$'s is even. 
However that contradicts $\widetilde{A}=BB$. 
Now suppose that $n/k$ is odd. 
Note that $n/k\geq 3$ because $k \ne n$. 
Since the number of $D$'s is odd and $n$ is even, 
the length of $D$ is even. 
Therefore, we can write $D=D'D''$ where the lengths of $D'$ and $D''$ 
are the same. 
Since $A_1\cdots A_{n-1}{\overline A}_n=BB$, 
the first $B$ must start with $D'$, 
and the second $B$ must start with $D''$. 
Therefore $D'=D''$ and $D=D'D'$. 
Then $B$ is a sequence of odd number of $D'$'s, 
and $A=D\cdots D$ is a double of $B$. 
That means $A=BB= \widetilde{A}$ and leads to a contradiction.
\item[(3)] Let $per(A)=n$. By some shift, 
we can assume that $A$ is maximal. 
Then by (1), $per(\mu(A))=n/2$ if $per(\mu(A))<n$. 
\end{proof}

By the previous proposition,
we have two situations when we apply $\mu$ on a periodic sequence.
One is that we get a sequence whose minimal period 
is the same as the original one. 
In section~\ref{sec-map-mu}, we will see that $\mu(\mu(A))=A$ in this case. 
Another case is that the minimal period of $\mu(A)$ is exactly the half. 
In this case, $\mu(A)$ indicates the itinerary of the periodic point 
from which the periodic point corresponding to $A$  
was born through a period-doubling bifurcation. 
What we want to get by applying $\mu$ is the itinerary of the 
periodic point which is on the same periodic point component of 
the bifurcation diagram. 
In case of period-doubling bifurcation, 
that is a point which is in the same orbit and obtained from the 
first one by applying the map the half times of the period. 
Thus, in this case, we need another treatment of the sequence.

\begin{definition}\label{Def:nu}
We define a map $\nu$ from the set of all periodic admissible sequences 
to itself as follows.
Let $A=(A_1A_2\cdots A_n)^\infty$ be a periodic admissible sequence 
of minimal period $n$.
When $per(\mu(A))=n$, define $\nu(A)=\mu(A)$.
When $per(\mu(A))$ is smaller than $n$ ( that is $n/2$ ), 
define $\nu(A)=\mathcal{S}^{n/2}(A)$. 
\end{definition}

For the standard family of quadratic maps, 
by the monotonicity of kneading sequences \cite{MT} and the 
non-degeneracy of bifurcation \cite{DH}, 
we can see that expanding periodic points $p$ and $q$ of 
minimal period $n$ are on the same periodic point component 
if and only if $I(p)=\nu(I(q))$. 

Our main theorem in this paper asserts that the same is true 
for any continuous full family of $C^1$ unimodal maps.

\begin{theorem}\label{main-theorem}  \quad 
Let $(1,p), (1,q)\in G_n^{-1}(0)\cap\{t=1\}$ $( p\ne q )$ and 
$per(p)=per(q)=n$. 
Then, 
$(1,p), (1,q)$ are on the 
same periodic point component if and only if $I(p,f_1)=\nu(I(q,f_1))$. 
\end{theorem}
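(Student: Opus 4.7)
The plan is to establish both directions of the equivalence, treating the two cases of Definition~\ref{Def:nu} in parallel.

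For the \emph{if} direction, assume $I(p,f_1)=\nu(I(q,f_1))$. In the first case, $per(\mu(I(q)))=n$, so $I(p)=\mu(I(q))$. I would continue both $p$ and $q$ backwards from $t=1$ using the implicit function theorem, valid as long as $(f_t^n)'\ne 1$. The symbolic definition of $\mu$ is tailored to pair precisely those two itineraries that merge at a saddle-node bifurcation, so the two continuations must meet at some $(t_0,x_0)$ with $(f_{t_0}^n)'(x_0)=1$, giving a continuous arc in $G_n^{-1}(0)$ between $(1,p)$ and $(1,q)$. In the second case, $per(\mu(I(q)))=n/2$ and $\nu(I(q))=\mathcal{S}^{n/2}(I(q))=I(f_1^{n/2}(q))$, so $p=f_1^{n/2}(q)$. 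Tracing the whole orbit of $q$ backwards in $t$, the first period-doubling parameter $t_0$ with $(f_{t_0}^{n/2})'(r)=-1$ forces the pair $\{q,f_1^{n/2}(q)\}=\{q,p\}$ to collapse to a single point $r\in V_n$, placing $(1,p)$ and $(1,q)$ in the same periodic point component through $(t_0,r)$.

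For the \emph{only if} direction, suppose $(1,p)$ and $(1,q)$ lie in a common periodic point component $V$ of period $n$. I would analyse how the itinerary $I(\cdot,f_\cdot)$ varies on $V$. Off the singular set $\Sigma_V=V_n\cup\{(t,x)\in V:(f_t^n)'(x)=1\}$ the implicit function theorem makes $V$ locally a $C^1$ arc along which the itinerary is locally constant. By Proposition~\ref{prop:diff-is-minus-one} and a parallel argument for the saddle-node locus, $\Sigma_V\cap(\{t\}\times\BR)$ is finite for each $t$. I would then show the two ways the itinerary can change across $\Sigma_V$: at a saddle-node point of $V$ the two nearby branches have itineraries that differ by $\mu$ (which is $\nu$ in the non-halving case), while at a point of $V_n$ the two branches represent the two halves of the same $f_t$-orbit, differing by $\mathcal{S}^{n/2}$ (which is $\nu$ in the halving case). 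Propagating these local rules along $V$ from $(1,q)$ to $(1,p)$, and using that $\nu$ is an involution, yields $I(p,f_1)=\nu(I(q,f_1))$; the motion is non-trivial because $p\ne q$.

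The principal difficulty is the wild topology of $V$ allowed in the $C^1$ setting: $V$ need not be path-connected or locally connected (Remark~\ref{rem:periodic-point-component}(6)), and other periodic point components may accumulate on $V$ from arbitrarily close. A naive path-connectedness argument therefore fails for the only-if direction. Here the separation/filling technique referred to in the introduction is essential: it provides a Jordan-type curve isolating $V$ from other components, reducing the global symbolic propagation to a bounded neighbourhood of $V$ with controlled finite singular set, in which the local rules above can be consistently assembled into the claimed identity.
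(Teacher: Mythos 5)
Your argument in both directions rests on continuing periodic branches in $t$ by the implicit function theorem, but Definition~\ref{def:continuous-family} assumes only $C^0$ dependence on $t$; the remark following it explicitly declines to assume $f^0(t,x)$ is differentiable in $t$. The IFT therefore does not apply, and the wildness of periodic point components allowed by Remark~\ref{rem:periodic-point-component}(6) is precisely the symptom of that missing differentiability. Moreover, even granting the continuation, your claim in the \emph{if} direction that the two branches \emph{must} merge (at a saddle-node, or by collapsing onto a period-doubling locus) is exactly the hard content of the theorem and is asserted rather than established: nothing in your sketch rules out the branches continuing disjointly back to $t=0$.

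You have also assigned the two key tools to the opposite directions from the paper's proof. The paper's \emph{only if} direction makes no use of path-connectedness, analytic continuation, or the separation theorem at all: it is a purely combinatorial argument via the shuffle machinery. Proposition~\ref{prop:same-shuffle} gives local constancy of the shuffle on loci of fixed minimal period, and Propositions~\ref{prop:half-shift} and~\ref{prop:shuffle} together with the ordinary connectedness of $V$ (not path-connectedness) yield $[\sigma(p)]=[\sigma(q)]$ and then pin down $I(p)$ and $I(q)$ as differing by $\mu$ or by $\mathcal{S}^{n/2}$ according to whether $V$ contains period-$n/2$ points --- exactly the two cases of $\nu$. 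The separation/filling technique (Proposition~\ref{prop:connecting-curve}) enters only in the \emph{if} direction, through Proposition~\ref{prop:two-points}: it shows a periodic point component meeting $\{t=1\}$ must meet it in at least two points, and then the \emph{if} direction follows immediately from the already-proved \emph{only if} direction and the fact that for the horseshoe $f_1$ itineraries determine points. Your proposal, by contrast, routes the separation theorem into the \emph{only if} direction to patch path-connectedness, which is neither how the paper uses it nor clearly sufficient for the symbolic propagation you describe across a possibly non--locally-connected $V$.
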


\begin{example}
{\rm
Let us consiter the standard family of quadratic maps $q_t(x)=t-x^2$ and 
its bifurcation diagram of period $6$. 
Refer Figure~\ref{figure_2}. 
Figure~\ref{fig:BD-Period-6-magnified} is the 
$[1.4, 2.1]\times [-2,2]$ part of Figure~\ref{figure_2}. 
The component of fixed points and the component of period $2$ are removed. 

\begin{figure}[t]
\centerline{\includegraphics[width=8cm]{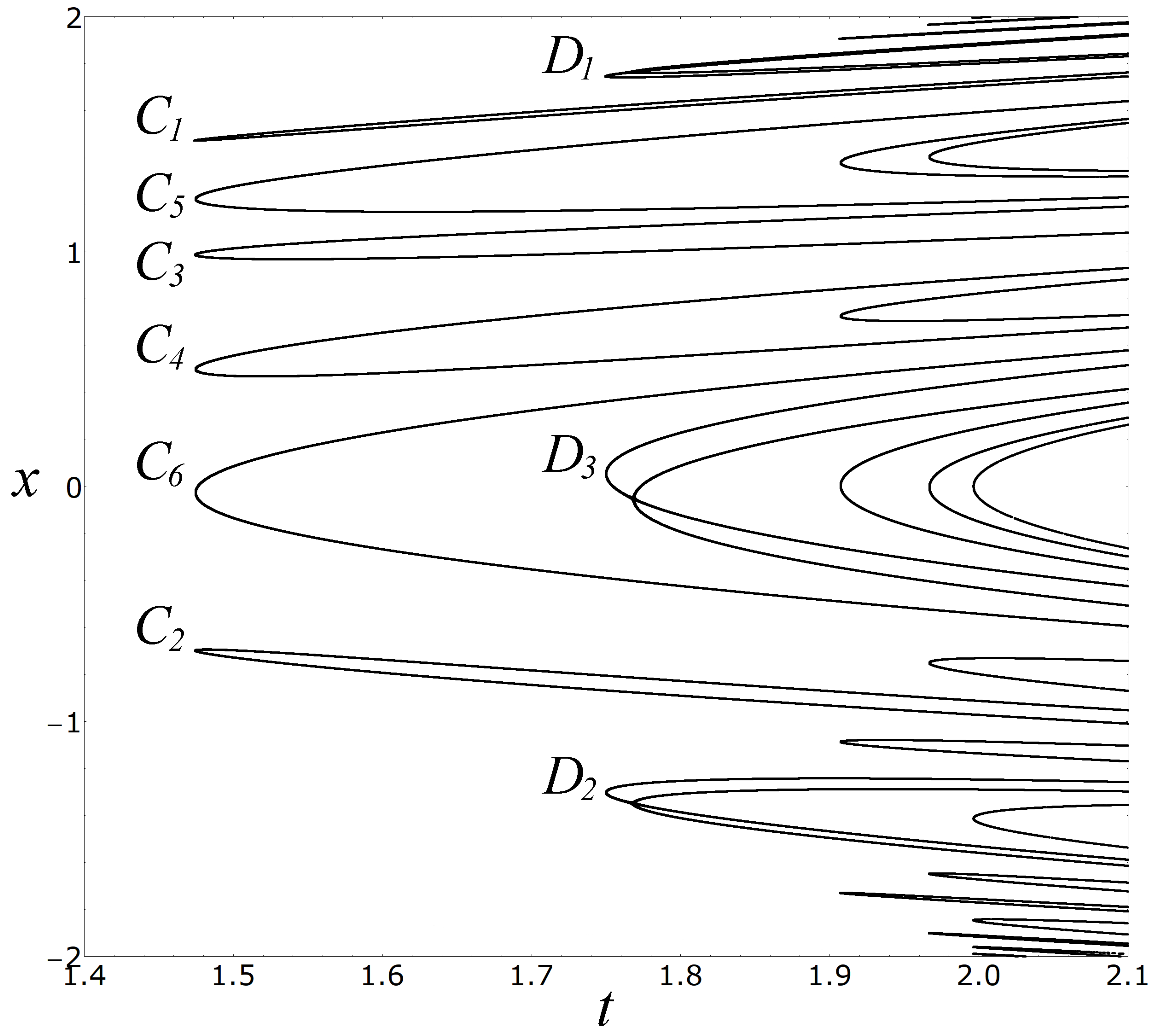}}
\caption{A magnified bifurcation diagram}
\label{fig:BD-Period-6-magnified}
\end{figure}

When $t$ is approximately $1.47$, 
a saddle-node bifurcation occurs and the first period orbit of period 6 apears. 
That produces the components $C_1, \ldots, C_6$. 
Let $q(t,x)=(t,q_t(x))$. 
Then $q(C_i)=C_{i+1}$ for $1\leq i \leq 5$ and $q(C_6)=C_1$. 
The itineraries of two points at the right end of $C_1$ are 
$(RLRRRR)^\infty$ and $(RLRRRL)^\infty$. 
($C_1$ is too thin and we can not see the shape very well. 
But the shape is basically the same as that of other $C_i$'s.)
It is natural that those two itineraries are maximal, 
because any point on $C_1$ has the biggest $x$-value among its orbit. 

It is known that 
for any $t>2$, any periodic point of $q_t$ is expanding 
and its itinerary does not change when $t$ increases. 
Therefore, we can discuss the itineraries of the periodic points 
of $q_{2.1}$ on the right end of Figure~\ref{fig:BD-Period-6-magnified}  
although it is not a horseshoe yet. 

Let $x_1>x_2$ be two right end points on $C_1$. 
Then $I(x_1)=(RLRRRR)^\infty$ and 
$\mu(I(x_1))=(RLRRR \overline{R})^\infty =(RLRRRL)^\infty=I(x_2)$. 
Since the period of $\mu(I(x_2))$ is also $6$, by the Definition~\ref{Def:nu}, 
we have $\nu(I(x_1))=\mu(I(x_1))=I(x_2)$. 
Also for any other component $C_i$, the situation is the same. 
Since $q^i(C_1)=C_{i+1}$ for $1\leq i \leq 5$, 
the itineraries of the right end two points of $C_i$ are $I(x)$ and $\mu(I(x))$ 
each other. 

The components $D_1$, $D_2$, $D_3$ apear when $t$ is approximately $1.75$. 
They are components of the periodic orbit of period $3$. 
Note that $q(D_1)=D_2$, $q(D_2)=D_3$ and $q(D_3)=D_1$. 
The itineraries of two right end points of $D_1$ are $(RLL)^\infty$ and $(RLR)^\infty$, 
and they are maximal. 
(Also $D_1$ is too thin, but the shape is basically the same as $D_3$ and $D_2$.)
Note that the period $3$ part of $D_i$ ($i=1,2,3$) is removed by the division. 
Therefore, it is not contained in $G_6^{-1}(0)$. 

A period-doubling bifurcation occurs when $t$ is approximately $1.77$. 
On $D_1$, it occurs on $(RLR)^\infty$ side, 
and as a result, a period $6$ branch is born. 
Its itinerary is $(RLRRLL)^\infty$ and the period is $6$. 
The periodic point correspoing to 
the $3$-times shift $\mathcal{S}^3((RLRRLL)^\infty)=(RLLRLR)^\infty$ 
is on the same period $6$ component. 
$(RLLRLR)^\infty$ is maximal. 
If we apply $\mu$ to $(RLLRLR)^\infty$, 
the period of $\mu((RLLRLR)^\infty)=(RLLRLL)^\infty=(RLL)^\infty$ becomes $3$. 
In this case, by the Definition~\ref{Def:nu}, 
$\nu((RLLRLR)^\infty)=\mathcal{S}^3((RLLRLR)^\infty)=(RLRRLL)^\infty$ 
and the corresponding point is on the same period $6$ branch. 
On $D_2$, $D_3$, the situation is similar 
because they are the mapped images of $D_1$. 
}
\end{example}

In the following sections, we prove some preliminary results. 
The proof of Theorem~\ref{main-theorem} will be given in 
section~\ref{sec:proof-main-theorem}.

\section{Basic properties of the map $\mu$}\label{sec-map-mu}

The proof of the following proposition is straightforward. 
Also this proposition is essentially the same as Lemma 3 of \cite{JR2}. 
Although some translation work is necessary, 
we can see that the definition of \lq\lq admissible\rq\rq \  in \cite{JR2} 
is the same as our \lq\lq maximal\rq\rq.

\begin{proposition}\label{prop:maximal-sequence}
Let $A=(A_1 \cdots A_n)^\infty$ be a maximal sequence 
of minimal period $n$. 
Then $\widetilde{A}=(A_1 \cdots A_{n-1} {\overline A}_n)^\infty$ is also 
a maximal sequence. 
\end{proposition}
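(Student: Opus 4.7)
The plan is to verify maximality directly from the definition. For each $k \in \{1, \ldots, n-1\}$ I need $\mathcal{S}^k(\widetilde{A}) \le \widetilde{A}$, and I would do this by running the two position-by-position comparisons $\widetilde{A}$ vs.\ $\mathcal{S}^k(\widetilde{A})$ and $A$ vs.\ $\mathcal{S}^k(A)$ in parallel. The key structural observation is that $\widetilde{A}$ differs from $A$ only at positions that are multiples of $n$, and $\mathcal{S}^k(\widetilde{A})$ differs from $\mathcal{S}^k(A)$ only at positions $j$ with $j+k$ a multiple of $n$; within one period this leaves exactly two discrepant positions, $j = n-k$ and $j = n$.

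Let $m$ be the first position where $A$ and $\mathcal{S}^k(A)$ differ. Since $A$ is maximal with minimal period $n$, we have $\mathcal{S}^k(A) < A$, so $m$ exists with $m \le n$. First I would dispose of the easy case $m < n-k$: at every position $j \le m$ the comparisons $\widetilde{A}_j$ vs.\ $\mathcal{S}^k(\widetilde{A})_j$ and $A_j$ vs.\ $\mathcal{S}^k(A)_j$ are literally the same entries, the common prefix has the same parity, and so $\mathcal{S}^k(\widetilde{A}) < \widetilde{A}$ transfers directly.

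The substantive work is the case $m \ge n-k$, which forces the shift-agreement relation $A_{k+i}=A_i$ for $i=1,\ldots,n-k-1$. Here I would split on whether $A_n = A_{n-k}$. If $A_n \ne A_{n-k}$, then at position $n-k$ one has $\mathcal{S}^k(\widetilde{A})_{n-k}=\overline{A_n}=A_{n-k}=\widetilde{A}_{n-k}$, so the flip cancels a disagreement and pushes the first true discrepancy to position $n$, where $\widetilde{A}_n=\overline{A_n}$ and $\mathcal{S}^k(\widetilde{A})_n=A_k$. If instead $A_n=A_{n-k}$, then $A$ and $\mathcal{S}^k(A)$ agree at position $n-k$ as well, so $m>n-k$; at position $n-k$ the flip creates a new disagreement in $\widetilde{A}$ vs.\ $\mathcal{S}^k(\widetilde{A})$, and I would use the parity of the prefix $A_1\cdots A_{n-k-1}$ together with $\mathcal{S}^k(A)<A$ to check the sign. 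In either sub-case the final inequality is read off using the parity of the relevant prefix and the hypothesis $\mathcal{S}^k(A) \le A$.

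The delicate point that I expect to require the most care is the parity bookkeeping across the two flipped positions: the flips at $n-k$ and at $n$ could in principle conspire to reverse the direction of the inequality. To close this, I would exploit the minimality of the period: the relation $A_{k+i}=A_i$ on a prefix, combined with matching of symbols at $n-k$ and $n$, would otherwise force a strictly smaller period on $A$, contradicting $per(A)=n$ and thereby ruling out the configurations in which $\widetilde{A}$ could fail maximality.
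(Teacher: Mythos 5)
The paper offers no proof of this proposition --- it declares the argument ``straightforward'' and cites Lemma~3 of \cite{JR2} --- so the proposal must stand on its own, and it has real gaps. Your structural setup (within one period the $\widetilde{A}$-comparison and the $A$-comparison agree except at positions $n-k$ and $n$) and your Case~1 are fine. But in sub-case~2a the claim that the flip ``pushes the first true discrepancy to position $n$'' is false: the first disagreement between $\widetilde{A}$ and $\mathcal{S}^k(\widetilde{A})$ can land anywhere in $(n-k,\,n]$. Take the maximal sequence $A=(RLRRRR)^\infty$, $n=6$, $k=4$; then $m=n-k=2$ and $A_2=L\neq R=A_6$, but $\widetilde{A}=(RLRRRL)^\infty$ and $\mathcal{S}^4(\widetilde{A})=(RLRLRR)^\infty$ first disagree at position $4$, not $6$. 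Your analysis inspects only position $n$ and therefore misses the generic case.

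The deeper problem is that the ``parity bookkeeping'' you defer is the substance of the proposition, and the device you propose for closing it --- minimality of the period --- is not the right mechanism. In sub-case~2b what you actually need is that the number of $R$'s in the block $A_1\cdots A_{n-k}$ is odd, and this does not follow from minimality of the period; it is a consequence of maximality applied to the \emph{complementary} shift $n-k$. Concretely: when $A_j=A_{j+k}$ for all $j\le n-k$, the first disagreement of $A$ with $\mathcal{S}^{k}(A)$ (at $m=n-k+j^*$) and the first disagreement of $A$ with $\mathcal{S}^{n-k}(A)$ (at some $j^*\le k$) involve the same pair of symbols $A_{j^*}$, $A_{n-k+j^*}$ with their roles exchanged, while the common prefix for the first comparison is the common prefix for the second preceded by exactly the block $A_1\cdots A_{n-k}$. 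Requiring both comparisons to come out ``$<$'' then forces that block to have odd parity, which is what gives the correct sign at the flipped position. Your proposal never invokes the second shift $n-k$, so the sign check cannot close; a similar pairing of shifts (together with an analysis of the intermediate positions $(n-k,n)$) is needed to repair sub-case~2a as well.
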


Note that if $per(A)<n$, then this proposition does not hold. 
For example, $A=(RLLRLLRLL)^\infty$ is maximal. 
However $\widetilde{A}=(RLLRLLRLR)^\infty$ is not maximal. 

From this proposition, 
it follows easily that;

\begin{proposition}\label{prop:mu-is-involutive} 
Let $A$ be a periodic sequence of minimal period $n$. 
If $per(\mu(A))=n$, then $\mu(\mu(A))=A$. 
\end{proposition}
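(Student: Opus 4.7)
The plan is to reduce to the case where $A$ is already maximal, and then apply the definition of $\mu$ twice, using Proposition~\ref{prop:maximal-sequence} to avoid any preparatory shift in the second application.

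First, I would reduce to the case where $A$ is maximal. Let $m$ be the unique integer in $\{0,\dots,n-1\}$ such that $A_0 := \mathcal{S}^m A$ is maximal; then $A = \mathcal{S}^{n-m} A_0$ (viewing $\mathcal{S}^n$ as the identity on period-$n$ sequences). Since shifts preserve the minimal period, $A_0$ has minimal period $n$ and, by Proposition~\ref{prop:shift-of-mu}, $per(\mu(A_0)) = per(\mu(A)) = n$. If the statement is known in the maximal case, a second application of Proposition~\ref{prop:shift-of-mu} yields
\[\mu(\mu(A)) = \mathcal{S}^{n-m}\,\mu(\mu(A_0)) = \mathcal{S}^{n-m} A_0 = A,\]
so it suffices to prove the statement assuming $A$ itself is maximal.

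Now suppose $A=(A_1\cdots A_n)^\infty$ is maximal of minimal period $n$. Then the shift parameter in the definition of $\mu$ is $0$, so
\[\mu(A)=(A_1\cdots A_{n-1}\overline{A_n})^\infty =:\widetilde{A}.\]
By Proposition~\ref{prop:maximal-sequence}, $\widetilde{A}$ is again a maximal sequence, and by the hypothesis its minimal period equals $per(\mu(A)) = n$. Hence the shift in the definition of $\mu(\widetilde{A})$ is also $0$, and a direct computation gives
\[\mu(\widetilde{A})=(A_1\cdots A_{n-1}\overline{\overline{A_n}})^\infty=(A_1\cdots A_n)^\infty=A,\]
since $\overline{\overline{A_n}}=A_n$.

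The only conceptual input is Proposition~\ref{prop:maximal-sequence} combined with the hypothesis $per(\mu(A))=n$: without the latter, the minimal period of $\widetilde A$ could drop to $n/2$ (cf.\ Proposition~\ref{prop:take-mu}(1)), which is precisely the period-doubling case handled separately by the map $\nu$ of Definition~\ref{Def:nu}. Once this possibility is excluded, nothing remains beyond the observation that flipping a single symbol twice is the identity, so I do not expect any real obstacle in this argument.
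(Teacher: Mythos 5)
Your proof is correct and matches the paper's intended argument: the paper gives no explicit proof, stating only that Proposition~\ref{prop:mu-is-involutive} ``follows easily'' from Proposition~\ref{prop:maximal-sequence}, and your reduction to the maximal case via Proposition~\ref{prop:shift-of-mu} followed by a direct double application of $\mu$ is exactly the argument being implied.
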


The following is one of the important properties of the map $\mu$.

\begin{proposition}\label{prop:monotonicity-of-mu}
Let $A$ and $B$ be periodic sequences of minimal period $n$. 
If $\mu(A)=\mu(B)$, then either $A=B$ or, 
$n$ is even and $\mathcal{S}^{n/2}(A)=B$. 
\end{proposition}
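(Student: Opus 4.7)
The plan is to split the argument on $per(\mu(A))$, which by Proposition~\ref{prop:take-mu}(3) is either $n$ or $n/2$, and handle the two cases very differently.

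In the first case, when $per(\mu(A))=n$, the hypothesis $\mu(A)=\mu(B)$ forces $per(\mu(B))=n$ as well (equal sequences have equal minimal periods), so Proposition~\ref{prop:mu-is-involutive} applies to both. Then $A=\mu(\mu(A))=\mu(\mu(B))=B$, and we are done without any further work.

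In the second case, $per(\mu(A))=n/2$, and in particular $n$ is even. Let $A^{*}$ and $B^{*}$ denote the (unique) maximal sequences in the shift-orbits of $A$ and $B$, so that $A=\mathcal{S}^{-m_A}(A^{*})$ and $B=\mathcal{S}^{-m_B}(B^{*})$ for appropriate $0\le m_A,m_B<n$. Writing $\widetilde{A^{*}}=(A^{*}_1\cdots A^{*}_{n-1}\overline{A^{*}_n})^\infty$ and similarly $\widetilde{B^{*}}$, the definition of $\mu$ gives $\mu(A)=\mathcal{S}^{-m_A}(\widetilde{A^{*}})$ and $\mu(B)=\mathcal{S}^{-m_B}(\widetilde{B^{*}})$. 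Rearranging the equality $\mu(A)=\mu(B)$ shows that $\widetilde{A^{*}}$ and $\widetilde{B^{*}}$ lie in the same shift-orbit. The crucial step, which I would emphasize, is to invoke Proposition~\ref{prop:maximal-sequence}: both $\widetilde{A^{*}}$ and $\widetilde{B^{*}}$ are still \emph{maximal} sequences, even though their minimal period has dropped to $n/2$. Since the maximal element of any shift-orbit is unique, this collapses to $\widetilde{A^{*}}=\widetilde{B^{*}}$. Flipping back the $n$-th coordinate then yields $A^{*}=B^{*}$ term by term, so $A$ and $B$ belong to a common orbit; write $B=\mathcal{S}^{k}(A)$ with $0\le k<n$.

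To finish, I would apply the equivariance Proposition~\ref{prop:shift-of-mu} to turn $\mu(A)=\mu(B)$ into $\mu(A)=\mathcal{S}^{k}(\mu(A))$. Since $\mu(A)$ has minimal period $n/2$, the shift $\mathcal{S}^{k}$ fixes $\mu(A)$ if and only if $n/2$ divides $k$; combined with $0\le k<n$ this leaves precisely $k=0$ (so $A=B$) or $k=n/2$ (so $B=\mathcal{S}^{n/2}(A)$), which is the desired dichotomy. The main obstacle in this argument is the $n/2$-period case, and specifically the observation that $\widetilde{A^{*}}$ retains maximality even after its period halves; without this, the uniqueness-of-maximal trick that pins down $A^{*}=B^{*}$ is unavailable and one is left chasing multiple possible preimages of $(D)^\infty$ under the flip-the-last-coordinate operation. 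Everything past that observation is elementary bookkeeping with shifts.
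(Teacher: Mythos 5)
Your proof is correct and uses essentially the same ingredients as the paper's: Proposition~\ref{prop:mu-is-involutive} in the $per(\mu(A))=n$ case, and Proposition~\ref{prop:maximal-sequence} together with uniqueness of the maximal element in a shift-orbit in the $per(\mu(A))=n/2$ case. The only (cosmetic) difference is in the finish: the paper tracks the positions $k,h$ of the flipped symbols and does direct index arithmetic to get $\mathcal{S}^{k-h}(\mu(A))=\mu(A)$ and hence $k-h=n/2$, whereas you first deduce $A^{*}=B^{*}$ and then invoke the shift-equivariance of $\mu$ (Proposition~\ref{prop:shift-of-mu}) to force $k\in\{0,n/2\}$.
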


\begin{proof}
If $per(\mu(A))=per(\mu(B))=n$, 
then by Proposition~\ref{prop:mu-is-involutive}, 
$A=\mu(\mu(A))=\mu(\mu(B))=B$. 
So, we assume that 
$per(\mu(A))=per(\mu(B))=n/2$. 

Let $A=(A_1 \cdots A_n)^\infty$ and $B=(B_1 \cdots B_n)^\infty$. 
Let $k$ and $h$ be integers such that 
$1\le k\le n$, $1\le h \le n$ and 
$$\begin{array}{ll} 
\mu(A) & =(A_1 \cdots {\overline A}_k \cdots A_n)^\infty \\
\mu(B) & =(B_1 \cdots {\overline B}_h \cdots B_n)^\infty \ .
\end{array}$$
If $k=h$, then $A=B$ for $\mu(A)=\mu(B)$. 
So, without loss of generality, we assume that $k>h$. 
By the definition of $\mu$ and Proposition~\ref{prop:maximal-sequence}, 
$\mathcal{S}^k(\mu(A)) =   (A_{k+1} \cdots A_n A_1 \cdots {\overline A}_k)^\infty$ and 
$\mathcal{S}^h(\mu(B)) = (B_{h+1} \cdots B_n B_1 \cdots {\overline B}_h)^\infty$ are 
maximal. 
Since the maximal sequence of periodic sequence is unique 
and $\mu(A)=\mu(B)$, we have 
$\mathcal{S}^k(\mu(A)) = \mathcal{S}^h(\mu(B))$. 
Since $\mu(A)=\mu(B)$ again, we have, 
$$\mathcal{S}^k(\mu(A))=\mathcal{S}^h(\mu(B)) = \mathcal{S}^h(\mu(A)) \ . $$
Therefore, 
$\mathcal{S}^{k-h}(\mu(A))=\mu(A)$. 
Then $k-h= n/2$, because 
$1\le k-h <n$ and $per(\mu(A))= n/2$. 
Also $(A_{k+1} \cdots A_n A_1 \cdots {\overline A}_k)^\infty
=(B_{h+1} \cdots B_n B_1 \cdots {\overline B}_h)^\infty$ means that 
$\mathcal{S}^k(A) = \mathcal{S}^h(B)$. 
So we get 
$$\mathcal{S}^h(B)=\mathcal{S}^k(A) = \mathcal{S}^{(n/2) + h}(A) $$
and this means 
$\mathcal{S}^{n/2}(A)=B$. 
\end{proof}

\section{Periodic point components}\label{sec:periodic-point-components}

In the definition of periodic point components, 
we removed components corresponding to the period $n/2$ 
from ${\widetilde G}_n^{-1}(0)$ when $n$ is an even number. 
Note that for a divisor $k$ of $n$ which is not a divisor of $n/2$, 
${\widetilde G}_k^{-1}(0)$ is not removed from 
${\widetilde G}_n^{-1}(0)$. 
However, we can show that such components 
do not have any intersection with components of minimal period $n$. 
In fact, we prove the following proposition which asserts that 
for any continuous family of $C^1$ maps on $\BR$ 
( not only of unimodal maps ), 
on the bifurcation diagrams, minimal period changes only to 
the half when it decreases.

\begin{proposition}\label{prop:period-changes}
Let $g_n \ (n=1,2,\ldots )$ be a sequence of $C^1$ maps 
converging to a $C^1$ map $g$ in the $C^1$ topology. 
Let $\mathcal{O}_n=\{q_1(n) < q_2(n) < \cdots\ < q_m(n)\}$ 
be a periodic orbit of $g_n$ of minimal period $m$ 
such that $q_i(n)$ converges to a periodic point $q_i$ of $g$ 
as $n\to \infty$ for each $i$. 
We write $\mathcal{O}=\{q_1\le q_2\le \cdots \le q_m\}$. 
If the minimal period of $\mathcal{O}$, say $k$, 
is smaller than $m$, then $k=m/2$.
\end{proposition}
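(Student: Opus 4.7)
The plan is to pass to a subsequence so that the combinatorial permutation on the orbit stabilizes, identify the rigid cluster structure forced by the coalescence of points, and then extract from the mean value theorem a sign condition that pins down the cluster size.

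Each $g_n$ acts on $\mathcal{O}_n$ as a cyclic $m$-permutation, i.e.\ there is an $m$-cycle $\sigma_n \in S_m$ with $g_n(q_i(n)) = q_{\sigma_n(i)}(n)$. Since there are only finitely many such $m$-cycles, I would first pass to a subsequence along which $\sigma_n = \sigma$ is constant. Next, introduce the equivalence relation $i \sim j$ iff $q_i = q_j$; by the non-decreasing nature of the limits, its classes are consecutive blocks $E_1 < E_2 < \cdots < E_r$ of $\{1,\ldots,m\}$. Continuity of $g$ lets $\sigma$ descend to a permutation $\bar\sigma$ of these classes, and since $\sigma$ is cyclic so is $\bar\sigma$, hence $r$ equals the minimal period $k$ of $\mathcal{O}$ on its distinct points; a transitivity argument then shows every class has the same size $s := m/k$. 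Standard cycle arithmetic in $S_m$ now gives that $\sigma^k$ consists of $k$ disjoint $s$-cycles, and since $\sigma^k$ preserves each $E_j$ and $|E_j| = s$, each $E_j$ is itself an $s$-cycle of $\sigma^k$.

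Set $p = q_{E_1}$ and let $x_1(n) < \cdots < x_s(n)$ be the points of $\mathcal{O}_n$ in $E_1$; all converge to $p$. Writing $\tau := \sigma^k|_{E_1}$, relabeled as an $s$-cycle on $\{1,\ldots,s\}$ via the local order, we have $g_n^k(x_i(n)) = x_{\tau(i)}(n)$ and $g_n^k \to g^k$ in $C^1$ near $p$. For each $i \in \{1,\ldots,s-1\}$ the mean value theorem supplies $\xi_i(n) \in (x_i(n), x_{i+1}(n))$ with
\[
\frac{x_{\tau(i+1)}(n) - x_{\tau(i)}(n)}{x_{i+1}(n) - x_i(n)} = (g_n^k)'(\xi_i(n)) \longrightarrow (g^k)'(p) =: \lambda.
\]
Since $k<m$ forces $s \ge 2$, I would argue on the sign of $\lambda$. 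If $\lambda > 0$, every ratio is eventually positive, so $\tau(i+1) > \tau(i)$ for all $i$ and $\tau$ would be the identity, contradicting that it is an $s$-cycle with $s \ge 2$. If $\lambda = 0$, rescale by $u_i(n) := (x_i(n) - x_1(n))/(x_s(n) - x_1(n))$ and pass to a further subsequence with $u_i(n) \to u_i^* \in [0,1]$, $u_1^* = 0$, $u_s^* = 1$; the limiting identities $u_{\tau(i+1)}^* - u_{\tau(i)}^* = \lambda (u_{i+1}^* - u_i^*) = 0$ telescope to $u_{\tau(j)}^* = u_{\tau(1)}^*$ for all $j$, contradicting that the multiset $\{u_{\tau(j)}^*\} = \{u_j^*\}$ contains both $0$ and $1$. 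Hence $\lambda < 0$; then the ratios are eventually negative, so $\tau$ is strictly decreasing and must be the reversal $i \mapsto s+1-i$. This permutation has order $2$, and matching its order with that of the $s$-cycle $\tau$ forces $s = 2$, i.e.\ $k = m/2$.

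The main obstacle I anticipate is the structural step: establishing cleanly that $\sigma^k|_{E_1}$ is a single $s$-cycle rather than a product of shorter cycles. This relies both on $\sigma$ being a genuine $m$-cycle (not just some permutation) and on $\bar\sigma$-transitivity yielding the uniform cluster size $s$, so that a cycle-length count in $S_m$ can be matched with the block decomposition. Once this is secured, the mean-value-plus-sign analysis closes each case in a single line.
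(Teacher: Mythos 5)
Your proof is correct and takes a genuinely different, and arguably more self-contained, route than the paper's. The paper first excludes the possibility that $\mathcal{O}$ contains the critical point by invoking $C^1$-stability of the resulting super-attracting sink, then uses that $g_n$ is a local homeomorphism near $\mathcal{O}$ to argue that $g_n$ maps each "cluster interval" $I_j(n)=[q_{i_j}(n),q_{i_{j+1}-1}(n)]$ homeomorphically onto another, so the set of endpoints is invariant and hence equals all of $\mathcal{O}_n$, forcing $k=m/2$ (with the fact that all clusters have equal cardinality $m/k$ left implicit, justified by the cyclic homeomorphic identification of clusters). Your argument replaces both of these topological steps with a single analytic one: after the same cluster decomposition, apply the mean value theorem to $g_n^k$ on consecutive points of one cluster, obtaining $\lambda=(g^k)'(p)$ as the limiting ratio; $\lambda>0$ forces $\tau$ increasing hence trivial, $\lambda=0$ is killed by the affine rescaling $u_i(n)$, and $\lambda<0$ forces $\tau$ to be the order reversal, which is a single cycle only when $s=2$. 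This buys you a cleaner treatment: you never need the somewhat delicate "persistent super-attracting sink has a nearby basin" argument (the $\lambda=0$ case is disposed of directly), and you make explicit the uniform cluster size and the single-cycle structure of $\sigma^k|_{E_j}$ that the paper uses silently. The paper's version is shorter once the critical-point lemma is granted, and the orientation-reversal picture it gives is perhaps more geometric, but your route is more elementary and fills in the same combinatorial bookkeeping more carefully. One small expository point: when you write "$u_{\tau(i+1)}^* - u_{\tau(i)}^* = \lambda(u_{i+1}^*-u_i^*)$," it is worth noting explicitly that the denominator $u_{i+1}(n)-u_i(n)>0$ so that the ratio-to-product passage is legitimate even when $u_{i+1}^*=u_i^*$; as written the logic is fine but a reader might pause there.
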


\begin{proof}
First of all, 
we claim that $\mathcal{O}$ does not contain any critical point. 
If one of those points is a critical point, 
then $\mathcal{O}$ is a hyperbolic sink of minimal period $k<m$. 
Since it is stable under small $C^1$ perturbation, 
for sufficiently large $n$, 
$g_n$ turns out to have a sink of the same period near $\mathcal{O}$, 
and its basin is very close to that of $\mathcal{O}$. 
That means $g_n$ cannot have a periodic point 
of period $m$ near $\mathcal{O}$. 
However that is a contradiction. 

We write $\mathcal{O} =\{ q_1 \le \cdots \le q_m \} 
= \{ p_1 < \cdots < p_k \}$. 
If $n$ is sufficiently large, there are numbers 
$1=i_1<i_2< \cdots <i_k <i_{k+1} = m+1 $ such that 
if $i_j\le h<i_{j+1}$ then $q_h(n)$ is very close to $p_j$ 
for all $1\le j\le k$. 

We write $I_j(n)=[q_{i_j}(n),q_{i_{j+1}-1}(n)]$ for all $1\le j\le k$. 
Since there is no critical point in $\mathcal{O}$, 
for sufficiently large $n$, 
$g_n$ is a local homeomorphism near $\mathcal{O}$. 
In particular, $g_n$ must map each interval $I_j(n)$ onto 
one of others homeomorphically. 
That means the set of all the boundary points of $I_j(n)$'s is 
invariant under $g_n$. 
Since $\mathcal{O}_n$ is a periodic orbit of minimal period $m$, 
$\mathcal{O}_n$ must consist of only the boundary points. 
Therefore, we have $k=m/2$. 
\end{proof}

\begin{proposition}\label{prop:k-is-the-half-of-n}
Let $V$ be a periodic point component.
If the period of $V$ is $n$ and 
$k=\min\{ per(t,x)| (t,x)\in V\}$ is smaller than $n$, 
then $k=n/2$ . 
\end{proposition}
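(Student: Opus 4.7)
My plan is to combine a 2-adic restriction on the minimal periods allowed in $G_m^{-1}(0)$ with a connectedness argument based on Proposition~\ref{prop:period-changes}. Write $\nu_2(\cdot)$ for the 2-adic valuation and $V_\pi = \{(t,x) \in V : per(t,x) = \pi\}$ for the stratum of minimal period $\pi$. Assume $V$ is a component of $G_m^{-1}(0)$ with period $n$, so every period in $V$ divides $m$ and in particular $n \mid m$. The first step is the restriction: if $\pi$ is any minimal period appearing in $G_m^{-1}(0)$ with $m = 2r$ even, then $\nu_2(\pi) \geq \nu_2(m) - 1$. Indeed, in the branch $f_t^r(x) \neq x$ of Definition~\ref{rigorous_def_irreducible_map}, vanishing of $G_{2r}$ forces $\pi \mid 2r$ but $\pi \nmid r$, so $\nu_2(\pi) = \nu_2(r) + 1 = \nu_2(m)$; in the branch $f_t^r(x) = x$, vanishing forces $(f_t^r)'(x) = -1$, and by the chain rule $(f_t^r)'(x) = [(f_t^\pi)'(x)]^{r/\pi}$, so $r/\pi$ must be odd (as $-1$ has no real even root), giving $\nu_2(\pi) = \nu_2(r) = \nu_2(m) - 1$.

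Second, I would carry out a connectedness argument. Fix any minimal period $\pi_0$ occurring in $V$, put $S = \{2^j \pi_0 : j \in \BZ\} \cap \BN$, and let $V^A = \bigcup_{\pi \in S} V_\pi$. The core claim is that $V^A$ is both open and closed in $V$; since $V^A \supset V_{\pi_0} \neq \emptyset$, connectedness of $V$ then forces $V^A = V$, so every minimal period in $V$ is of the form $2^j \pi_0$. Both openness and closedness reduce to the closure relation $\overline{V_\pi} \subset V_\pi \cup V_{\pi/2}$ (with $V_{\pi/2} = \emptyset$ if $\pi$ is odd), combined with the fact that $\pi \in S$ iff $\pi/2 \in S$ whenever $\pi/2$ is an integer. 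To extract this closure relation from Proposition~\ref{prop:period-changes}, given $(t_j, x_j) \in V_\pi$ converging to $(t_0,x_0) \in V$, I would list the orbit $\{f_{t_j}^i(x_j)\}_{i=0}^{\pi-1}$ in increasing value as $q_1(j) < \cdots < q_\pi(j)$ and pass to a subsequence, using finiteness of permutations together with the compactness from Definition~\ref{def:full-family}(iii), on which the sorting permutation is constant and each $q_i(j)$ converges. Since $f_{t_j} \to f_{t_0}$ in the $C^1$ topology by Definition~\ref{def:continuous-family}, the hypotheses of Proposition~\ref{prop:period-changes} are satisfied, and its conclusion gives $per(t_0,x_0) \in \{\pi, \pi/2\}$.

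Finally, taking $\pi_0 = n$, every period of $V$ is a positive integer of the form $n/2^s$ with $s \geq 0$, and $k < n$ forces $s \geq 1$. The restriction from the first step gives $\nu_2(k) \geq \nu_2(m) - 1$; combined with $\nu_2(n) \leq \nu_2(m)$ (from $n \mid m$), the inequality $\nu_2(n) - s \geq \nu_2(m) - 1 \geq \nu_2(n) - 1$ forces $s \leq 1$, hence $s = 1$ and $k = n/2$. The residual cases $\nu_2(n) < \nu_2(m)$ or $n$ odd admit no $s \geq 1$ at all, so the hypothesis $k < n$ is vacuous there. The main obstacle I foresee is the bookkeeping for the closure relation: applying Proposition~\ref{prop:period-changes} to orbits whose orderings vary with $j$ is routine but requires careful subsequence extraction.
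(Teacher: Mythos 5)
Your proof is correct, and while it rests on the same two tools as the paper (Proposition~\ref{prop:period-changes} and the algebraic consequences of the vanishing of $G_{2r}$), it organizes them into a genuinely different argument. The paper supposes $k<n/2$, decomposes $V$ into $V'=\{per\in\{n,n/2\}\}$ and $V''=V\setminus V'$, uses connectedness to produce a point $(s_0,q_0)\in V''\cap\overline{V'}$, applies Proposition~\ref{prop:period-changes} to force $per(q_0)=n/4$, and then contradicts Proposition~\ref{prop:diff-is-minus-one}(1) through $(f_{s_0}^{n/2})'(q_0)=\bigl((f_{s_0}^{n/4})'(q_0)\bigr)^2\ge 0$. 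You instead prove two global structural facts and intersect them: the $2$-adic constraint $\nu_2(\pi)\in\{\nu_2(m)-1,\nu_2(m)\}$ on every minimal period in $G_m^{-1}(0)$, obtained from $(f_t^r)'(x)=\bigl[(f_t^\pi)'(x)\bigr]^{r/\pi}$ (the general form of the paper's squaring trick), and, via a clopen decomposition driven by the closure relation $\overline{V_\pi}\subset V_\pi\cup V_{\pi/2}$, the fact that the minimal periods occurring in $V$ form a dyadic chain $\{n/2^s\}$. Your route is slightly longer but more systematic: it makes explicit the subsequence extraction needed to invoke Proposition~\ref{prop:period-changes}, and it does not rely on the paper's implicit identification of a component of period $n$ with a component of $G_n^{-1}(0)$ (which underlies the paper's assertion that $k\mid n$ and its appeal to Proposition~\ref{prop:diff-is-minus-one}); you handle an arbitrary ambient $G_m^{-1}(0)$ with $n\mid m$ and show the statement is vacuous outside the case $\nu_2(m)=\nu_2(n)$. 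The paper's route, by contrast, reaches the contradiction faster by going directly to the single boundary point where the period first drops below $n/2$.
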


\begin{proof}
Suppose that $k\ne n/2$. 
Since $k$ must be a divisor of $n$, 
we have $k<n/2$. 
We write 
$V'=\{ \, (s,x)\in V\, |\, per(x)=n \mbox{ or } n/2\,\}$ 
and $V''=V-V'$. 
$V''$ is a closed subset of $V$, 
because $per(x)<n/2$ for any $(s,x)\in V''$. 

It is clear that there exists a point 
$(s_0,q_0)\in V''$ such that 
$(s_0,q_0)\in \overline{V'}$, otherwise 
$V''$ is a closed and open subset of $V$, 
and $V$ turns out to be non-connected. 
Therefore, there is a sequence of points 
$(s_i,p_i)\in V'$ ($i=1,2,\ldots $) such that 
$(s_i,p_i)\to (s_0,q_0)$ as $i\to \infty$.  
If $per(p_i)=n$ for infinitely many $i$'s, 
then by Proposition~\ref{prop:period-changes}, 
$per(q_0)$ must be $n/2$, 
and that contradicts the definition of $V''$. 
So, we can assume that $per(p_i)=n/2$ 
for any $i\ge1$. 
Then by Proposition~\ref{prop:period-changes} again, 
we get $per(q_0)=n/4$. 
Since $f_{s_0}^{n/4}(q_0)=q_0$, we have 
$(f_{s_0}^{n/2})'(q_0)=((f_{s_0}^{n/4})'(q_0))^2 >0$. 
However that contradicts Proposition~\ref{prop:diff-is-minus-one}(1). 
\end{proof}
\ 

One of the keys to the proof of out main theorem 
is Proposition~\ref{prop:two-points} 
which claims that every periodic point component has two and more intersection 
points with $\{t=1\}$ line. 
(In fact, we will see later that the number of intersection points is 
exactly two.)
The essence of the proof of Proposition~\ref{prop:two-points} is 
the following Proposition~\ref{prop:connecting-curve} whose proof needs 
a quite involved argument of general topology.

\begin{proposition}\label{prop:connecting-curve}
Let $M>0$ be a positive number, $D=[0,1]\times[-M,M]$ a rectangle and 
$G:D\to \BR$ a continuous map such that 
$G^{-1}(0) \cap \partial D$ is a finite set on $\{1\}\times (-M,M)$. 

If there is a connected component $F_C$ of $G^{-1}(0)$ which intersects 
exactly once to $\{ 1 \} \times (-M, M)$ at a point $p_0 = (1, y_0)$, 
then there is a path in $D - G^{-1}(0)$ connecting 
points $y_-$ and $y_+$ in $\{ 1 \} \times (-M, M)$ which are 
arbitrary close to $y_0$ with $y_- < y_0 < y_+$. 
\end{proposition}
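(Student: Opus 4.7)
The plan is to construct an open simply connected region $V\subset D$ containing $F_C$ whose topological boundary is disjoint from $G^{-1}(0)$ and whose intersection with $\partial D$ is a small open interval about $p_0$; the ``inner'' boundary arc of $V$ is then the desired path. The two main ingredients are a Sura--Bura type clopen separation in the compact metric set $K:=G^{-1}(0)$ and a plane-topology ``filling'' construction of the sort referenced in the introduction.

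First I would fix $\varepsilon>0$ so small that the interval $I=\{1\}\times(y_0-\varepsilon,y_0+\varepsilon)$ meets $K$ only at $p_0$; this is possible because $K\cap\partial D$ is finite. Since $F_C$ is a connected component of the compact metric space $K$, the Sura--Bura theorem supplies, for every open neighborhood $W$ of $F_C$ in $D$, a clopen-in-$K$ subset $A$ with $F_C\subset A\subset W$. Choosing $W$ a sufficiently small metric neighborhood of $F_C$ we have $W\cap\partial D\subset I$, hence $A\cap\partial D\subset K\cap I=\{p_0\}$. Now $A$ and $K\setminus A$ are disjoint compact sets at positive distance $3\theta$, and the open $\theta$-neighborhood $U$ of $A$ in $D$ satisfies $F_C\subset U$, $\partial U\cap K=\varnothing$, $\overline U\cap(K\setminus A)=\varnothing$, and $U\cap\partial D\subset I$.

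Next I would carry out the filling step. Let $U_0$ be the connected component of $U$ containing $F_C$, set $J=U_0\cap\partial D$ (an open subset of $\partial D$ containing $p_0$), and let $E$ be the connected component of $D\setminus U_0$ that contains the outer arc $\partial D\setminus J$. Put $V:=D\setminus E$. A direct check gives $V\cap\partial D=J$, $\partial V\subset(\partial U_0\cap D^\circ)\cup\partial J$, and for a generic choice of $\theta$ the two endpoints of $J$ avoid the finite set $K\cap\partial D$, so $\partial V\cap K=\varnothing$. Since $E$ contains $\partial D\setminus J$ and the latter lies in the closure of $\mathbb{R}^2\setminus D$, the complement of $V$ in $\mathbb{R}^2$, namely $E\cup(\mathbb{R}^2\setminus D)$, is connected; hence $V$ is a simply connected planar open set.

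Finally, by an initial replacement of $U$ by a finite union of small open rectangles in general position (whose boundaries stay disjoint from $K$ and still cover $A$), one can arrange that $\partial V\cap D^\circ$ is a single simple Jordan arc $\gamma$ joining the two endpoints $(1,y_-),(1,y_+)$ of $\overline{J}$, with $y_-<y_0<y_+$. The arc $\gamma$ lies in $D\setminus K$ because $\partial V\cap K=\varnothing$, and beginning with a sufficiently small initial neighborhood $W$ of $F_C$ makes $|y_\pm-y_0|$ arbitrarily small. The main obstacle is this polygonal-approximation step: without smoothing, the topological boundary of a simply connected planar open set can be a wild continuum (for instance with non-trivial prime-end structure), so $\partial V\cap D^\circ$ need not be a simple arc. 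The ``filling'' technique of \cite{JKP,KT1,KT2} cited in the introduction is exactly what performs this smoothing while preserving the crucial positive distance $d(K,\partial U)>0$.
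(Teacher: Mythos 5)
Your approach is genuinely different from the paper's. The paper doubles $D$ across $\{1\}\times[-M,M]$, collapses $\partial(\hat R(D))$ to a point to get $\BS^2$, fills \emph{every} component of $G^{-1}(0)$, and applies Moore's decomposition theorem twice so that the image of $G^{-1}(0)$ becomes a compact totally disconnected (hence zero-dimensional) subset of a sphere; since a sphere is a Cantor manifold, its complement is connected and therefore path-connected, and the path is pulled back symmetrically. You instead isolate $F_C$ via Sura--Bura, take a metric neighborhood $U$, fill its distinguished component, and try to extract the desired path as the inner boundary arc of the filled region $V$. The preliminary steps (clopen separation, $\partial U\cap K=\varnothing$ from the distance-$3\theta$ gap, $\partial V\subset\partial U_0$, simple connectivity of $V$ from the connectedness of $E\cup(\BR^2-D)$) are sound, and this is a legitimately more elementary route in spirit.

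However, there is a real gap exactly where you flag one, and the way you propose to close it does not work. You reduce the problem to showing that $\partial V\cap D^\circ$ is a simple arc joining $(1,y_-)$ to $(1,y_+)$, acknowledge that the boundary of a bounded simply connected planar domain can be a wild continuum, and assert that the ``filling'' technique of \cite{JKP,KT1,KT2} ``performs this smoothing.'' It does not: in those references and in the present paper, filling is a device for producing non-separating continua so that Moore's theorem applies to the decomposition into fillings; it never replaces a set by a tamer one with a locally connected or Jordan boundary, and nothing in it manufactures an arc from a possibly wild $\partial V$. Your alternative suggestion --- replacing $U$ by a finite union of rectangles in general position --- is the step that would actually have to carry the proof, but it is left entirely as an assertion: you would still have to show that the resulting $U_0$ meets $\partial D$ in a \emph{single} interval (or argue, via the impossibility of two disjoint connected sets joining opposite boundary arcs of a disk, that the filling $V$ does so), and that $\partial V$ is a compact connected $1$-manifold with boundary $\{y_-,y_+\}$, hence an arc. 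None of this is in the write-up, so the argument as given does not establish the proposition. A related, minor issue: $J=U_0\cap\partial D$ need not be connected, so $\partial D\setminus J$ need not be connected, and the phrase ``the component of $D\setminus U_0$ that contains the outer arc $\partial D\setminus J$'' must be replaced by ``the component containing $\partial D\setminus I$.''
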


This proposition might be seemingly obvious. 
But actually, we need a quite complicated argument for the proof, 
because there is a possibility of the existence of infinitely many 
components accumulating to other components as mentioned in 
Remark~\ref{rem:periodic-point-component}(6). 
Such accumulation can occur in many places and also the shapes of such components 
can be a quite deformed one. 
For those reasons, the existence of a continuous curve as stated in 
Proposition~\ref{prop:connecting-curve} is not obvious.

We will give the proof in section~\ref{section-proof-connecting-curve}. 
In our situation, $G_n^{-1}(0)\cap \partial D$ is a finite set 
on $\{1\}\times (-M,M)$ for any $n$, because $f_1$ is a horseshoe and $f_t$ does not 
have any periodic point on $\partial D$ except them by hypothesis.

\begin{proposition}\label{prop:two-points}
Let $V$ be a periodic point component of $G_n^{-1}(0)$ of period $n$. 
If $V\cap\{t=1\}\ne\emptyset$, then $V\cap\{t=1\}$ contains at least two points. 
\end{proposition}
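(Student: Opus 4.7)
My plan is to argue by contradiction via Proposition~\ref{prop:connecting-curve}. Suppose $V\cap\{t=1\}$ is a single point $(1,y_0)$. The first step is to show that $y_0$ is a periodic point of $f_1$ of minimal period exactly $n$. Since $(1,y_0)\in V\subset G_n^{-1}(0)$, the minimal period of $y_0$ divides $n$; by Proposition~\ref{prop:k-is-the-half-of-n} it is either $n$ or $n/2$, and by Remark~\ref{rem:periodic-point-component}(3) the case $n/2$ can occur only when $(f_1^{n/2})'(y_0)=-1$. But $f_1$ is a horseshoe, so every periodic point is expanding with $|(f_1^k)'|>1$ on its orbit for every period $k$; this rules out the derivative $-1$, so $per(y_0)=n$.

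Next I would check that $G_n(1,\cdot)$ genuinely changes sign at $y_0$. Writing $g(x)=f_1^n(x)-x$, we have $g(y_0)=0$ and $g'(y_0)=(f_1^n)'(y_0)-1\neq 0$ because $|(f_1^n)'(y_0)|>1$. For odd $n$, $G_n(1,\cdot)=g$, so the sign change is immediate. For even $n=2m$, the minimality of the period implies $f_1^m(y_0)\neq y_0$, hence in a neighborhood of $y_0$ we have $G_n(1,x)=g(x)/(f_1^m(x)-x)$ with a nonvanishing continuous denominator; so $G_n(1,\cdot)$ changes sign at $y_0$ as well.

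The third step is to invoke Proposition~\ref{prop:connecting-curve} on $D=[0,1]\times[-M,M]$ with $G=G_n$. One has to verify its hypothesis that $G_n^{-1}(0)\cap\partial D$ is a finite subset of $\{1\}\times(-M,M)$: by Definition~\ref{def:full-family}(i), $f_0$ has no periodic points so the left edge is clean; by Remark~\ref{rem:unimod} no non-wandering points live outside $[-M,M]$ and by enlarging $M$ a hair we can ensure none lie exactly on $\{\pm M\}$; on the right edge $f_1$ is a horseshoe, contributing only finitely many periodic points of period dividing $n$. Since by hypothesis $V$ meets $\{t=1\}\times(-M,M)$ exactly at $(1,y_0)$, the proposition supplies a path $\gamma\subset D\setminus G_n^{-1}(0)$ from $(1,y_-)$ to $(1,y_+)$ with $y_-<y_0<y_+$ both as close to $y_0$ as desired.

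Finally I choose $y_-,y_+$ so close to $y_0$ that no other zero of $G_n(1,\cdot)$ lies in $[y_-,y_+]$; then by the sign-change established above, $G_n(1,y_-)$ and $G_n(1,y_+)$ have opposite signs. But $G_n$ is continuous and nowhere zero on $\gamma$, forcing $G_n\circ\gamma$ to have constant sign, a contradiction. The main obstacle I anticipate is step two: carefully justifying the sign change of the quotient $G_{2n}$ at a point where the denominator is only guaranteed nonzero locally, together with cleanly ensuring the finiteness-on-$\partial D$ hypothesis of Proposition~\ref{prop:connecting-curve}; the rest is a standard intermediate-value/connectedness argument once these pieces are in place.
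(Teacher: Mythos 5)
Your proposal is correct and follows essentially the same approach as the paper: rule out period $n/2$ at $t=1$ via hyperbolicity, observe that hyperbolicity forces a sign change of $G_n(1,\cdot)$ across the point (the paper computes $\tfrac{\partial G_n}{\partial x}(1,p) = \tfrac{(f_1^n)'(p)-1}{f_1^{n/2}(p)-p}\ne 0$ in the even case, which is the same content as your numerator/denominator argument), then derive a contradiction from the path supplied by Proposition~\ref{prop:connecting-curve}. Your extra remark about nudging $M$ so that no zeros sit on $\{\pm M\}$ is a reasonable technical cleanup that the paper handles implicitly via Remark~\ref{rem:unimod}.
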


\begin{proof}
Suppose that $V\cap\{t=1\}=\{ (1,p) \}$ is only one point. 
Note that the minimal period of $p$ is $n$, because 
by Proposition~\ref{prop:k-is-the-half-of-n}, 
$per(p)$ is $n$ or $n/2$, and if $per(p)=n/2$, 
then by Proposition~\ref{prop:diff-is-minus-one}(1), 
$\left(f_1^{n/2}\right)'(p)=-1$. 
However $f_1$ is horseshoe and periodic points must be hyperbolic. 
Therefore $per(p)=n$. 

Let $J$ be an open interval in $\{t=1\}$ such that $p\in J$ and 
$J\cap G_n^{-1}(0)=\{p\}$. 
Then by Proposition~\ref{prop:connecting-curve}, 
there are two points $p_0<p<p_1$ in $J$ and a continuous curve 
$\gamma:[0,1]\to [0,1]\times [-M,M]$ such that $\gamma(0)=p_0$, $\gamma(1)=p_1$ 
and $\gamma([0,1])\cap G^{-1}(0)=\emptyset$. 

When $n$ is odd, $G_n(t,x)=f_t^n(x)-x$ and 
$$\frac{dG_n}{dx}(1,p)=(f^n_1)'(p)-1\ .$$ 
When $n$ is even, 
note that $f_1^{n/2}(p)-p\ne0$ because the minimal period of $p$ is $n$. 
By an easy calculation, we have 
$$ \frac{dG_n}{dx}(1,p) = 
\frac{(f^n_1)'(p)-1}{f^{n/2}_1(p)-p}  \  .$$
Since $p$ is a hyperbolic periodic point of $f_1$, 
$(f^n_1)'(p)\ne 1$. 
Therefore, in both cases, $\displaystyle\frac{dG_n}{dx}(1,p)\ne0$, 
and the signs of $G_n(1,p_0)$ and $G_n(1,p_1)$ are different. 
However, since $\gamma$ does not have any intersection 
with $G_n^{-1}(0)$, 
$G_n(\gamma(s))$ is non-zero for any $0\le s\le1$. 
That is a contradiction. 
\end{proof}

\section{Shuffle of periodic point}

For a positive integer $n$, 
we denote the set of all permutations of \mbox{$\{0,1, \ldots ,n-1\}$} 
by $\mathcal{S}_n$.

\begin{definition}
Let $(i_1,i_2, \ldots ,i_n)\in\mathcal{S}_n$ and 
$f:\BR \to \BR$ be a map. 
For a periodic point $p$ of $f$ with minimal period $n$, when 
\[f^{i_1}(p)<f^{i_2}(p)< \cdots <f^{i_n}(p)\ ,\]
we denote  $\sigma(p)=(i_1,i_2, \ldots ,i_n)$ 
and call it the {\em shuffle} of $p$. 
\end{definition}

\begin{proposition}\label{prop:shuffle}
Let $f$ and $g$ be unimodal maps. 
Let $p$ and $q$ be periodic points of $f$ and $g$ respectively 
such that 
\begin{description}
\item[$(i)$] $per(p)=per(q)=n$, 
\item[$(ii)$] the orbits of $p$ and $q$ do not contain the critical point, 
\item[$(iii)$]  $\sigma(p)=\sigma(q)$. 
\end{description}
Then at least one of the following holds: 
\begin{description}
\item[$(a)$] $I(p)=I(q)$, 
\item[$(b)$] $per(I(q))=n$ and $I(p)=\mu(I(q))$, 
\item[$(c)$] $per(I(p))=n$ and $I(q)=\mu(I(p))$. 
\end{description}
\end{proposition}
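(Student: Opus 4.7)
The plan is to translate shuffle data into itineraries. Because the orbits avoid the critical point $0$, each iterate contributes $L$ (if negative) or $R$ (if positive). Given the common shuffle $\sigma(p)=\sigma(q)=(i_1,\ldots,i_n)$, there is a unique $k_p\in\{0,1,\ldots,n\}$ with $f^{i_j}(p)<0$ for $j\le k_p$ and $f^{i_j}(p)>0$ for $j>k_p$, and analogously $k_q$ for $q$; writing $I(p)=A_0 A_1\cdots$, we have $A_{i_j}=L$ if and only if $j\le k_p$. Thus $I(p)=I(q)$ exactly when $k_p=k_q$, which is conclusion $(a)$.

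Now I would assume $k_p\ne k_q$, without loss of generality $k_p<k_q$. The pre-image in the orbit of the maximum $f^{i_n}(p)$ is $f^{i_n-1 \bmod n}(p)$, and since $f$ is strictly increasing on $(-\infty,0]$ and strictly decreasing on $[0,\infty)$, among the negative (resp.\ positive) iterates the one with the largest $f$-image is the largest negative $f^{i_{k_p}}(p)$ (resp.\ the smallest positive $f^{i_{k_p+1}}(p)$). Hence $i_n-1 \bmod n\in\{i_{k_p},i_{k_p+1}\}$, and by the same reasoning applied to $q$, $i_n-1 \bmod n\in\{i_{k_q},i_{k_q+1}\}$. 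Since the $i_j$ are distinct, these two-element index sets can intersect only when $|k_p-k_q|\le 1$, forcing $k_q=k_p+1$ and $i_n-1\equiv i_{k_p+1}\pmod n$. Consequently $I(p)$ and $I(q)$ agree at every position except $j_0:=i_{k_p+1}$, where $I(p)$ has $R$ and $I(q)$ has $L$.

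By Proposition~\ref{prop:same-order}, the cyclic shift of $I(p)$ beginning at the orbit maximum's index $i_n$ is its maximal representative, and $\mu$ flips the last symbol of that representative, namely the symbol at position $i_n-1\equiv j_0\pmod n$. If $per(I(p))=n$ then $\mu(I(p))$ differs from $I(p)$ only at $j_0$, so $\mu(I(p))=I(q)$, giving conclusion $(c)$; exchanging the roles of $p$ and $q$ yields conclusion $(b)$ when $per(I(q))=n$.

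The main obstacle is ruling out the possibility that both $\ell_p:=per(I(p))$ and $\ell_q:=per(I(q))$ are strictly less than $n$. The $\mathbb{F}_2$-difference $I(p)\oplus I(q)$ is $n$-periodic and nonzero on exactly one residue class modulo $n$, so has minimal period $n$; being also periodic of period $\mathrm{lcm}(\ell_p,\ell_q)$, this lcm must equal $n$, so $\ell_p,\ell_q$ are incomparable proper divisors of $n$. Setting $d=\gcd(\ell_p,\ell_q)$ and invoking the Chinese remainder theorem, each $j\in\{0,\ldots,n-1\}$ corresponds to a pair $(j\bmod\ell_p,\,j\bmod\ell_q)$ whose entries agree modulo $d$, and the agreement at $n-1$ of the $n$ compatible pairs forces $I(p)(r_p)=I(q)(r_q)$ for every such pair except one, say $(r_p^*,r_q^*)$. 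Since $\ell_p/d\ge 2$ and $\ell_q/d\ge 2$ by incomparability, one can pick $r_p\ne r_p^*$ with $r_p\equiv r_p^*\pmod d$ and $r_q\ne r_q^*$ with $r_q\equiv r_q^*\pmod d$; writing $c^*:=I(q)(r_q^*)$, the three constraints at $(r_p,r_q^*)$, $(r_p^*,r_q)$, and $(r_p,r_q)$ yield $I(p)(r_p)=c^*$, $I(q)(r_q)=\overline{c^*}$, and $I(p)(r_p)=I(q)(r_q)$, respectively, giving the contradiction $c^*=\overline{c^*}$. Hence at least one of $I(p),I(q)$ has minimal period $n$, and combined with the previous paragraph this completes the proof.
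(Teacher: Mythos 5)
Your proof is correct and follows the same skeleton as the paper's: a common shuffle forces $I(p)$ and $I(q)$ to agree everywhere except possibly at the position $i_n - 1 \bmod n$ preceding the orbit maximum; the maximal shift of either itinerary sits at $i_n$, so $\mu$ flips precisely that symbol; and it remains to see that at least one of $I(p),I(q)$ has minimal period $n$. The genuine difference is in this last step. The paper here simply invokes Proposition~\ref{prop:take-mu}(2), which it proved earlier by a parity case analysis that leans on maximality (through Proposition~\ref{prop:take-mu}(1)). You instead establish the needed fact from scratch --- two $n$-periodic sequences that disagree on exactly one residue class mod $n$ cannot both have a proper period dividing $n$ --- by a self-contained $\gcd/\mathrm{lcm}$ argument on compatible residue pairs. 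Your lemma is purely combinatorial and does not use maximality at all, so it is strictly more general than what Proposition~\ref{prop:take-mu}(2) supplies; the trade-off is that it redoes work the paper already packaged in a lemma. The first half of your argument (intersecting the pre-image index sets $\{i_{k_p},i_{k_p+1}\}$ and $\{i_{k_q},i_{k_q+1}\}$ to force $k_q=k_p+1$) is the same observation as the paper's sign analysis of $h^{i_{k\pm1}}(x)$, just organized around the negativity counts $k_p,k_q$ instead.
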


\begin{proof}
Assume that $\sigma(p)=\sigma(q)=(i_1, \ldots ,i_n)$. 
Let $x$ stands for $p$ or $q$, and 
$h$ stands for $f$ or $g$ corresponding to $x$.

If $n=1$, then $I(x)=R^\infty$ or $L^\infty$ because 
neither $p$ nor $q$ is the critical point. 
So, the statement clearly holds. 
We assume $n\ge2$. 

By the definition, 
$h^{i_n}(x)$ is the biggest among $h^{i_j}(x)$'s. 
We write $i_k=i_n-1$.
When $i_n=0$, we define $i_k=n-1$. 
Then $h(h^{i_k}(x))=h^{i_n}(x)$. 

Since $i_k \ne i_n$, $h^{i_{k+1}}(x)$ exists. 
Therefore, by the definition of shuffle, 
\[h^{i_{k-1}}(x)<h^{i_k}(x)<h^{i_{k+1}}(x)\]
if $h^{i_{k-1}}(x)$ exists. 

We have $h^{i_{k+1}}(x)>0$, 
because if $h^{i_{k+1}}(x)<0$ 
then $h^{i_{k}}(x)<h^{i_{k+1}}(x)<0$, 
and since $h$ is orientation preserving on $\{x<0\}$, 
that contradicts the fact that 
$h^{i_n}(x)$ is the biggest among $h^{i_j}(x)$'s. 
If $k\ne 1$, $h^{i_{k-1}}(x)$ exists and 
$h^{i_{k-1}}(x)< h^{i_{k}}(x)$. 
We have $h^{i_{k-1}}(x)<0$, 
 because if $h^{i_{k-1}}(x)>0$ 
then $0<h^{i_{k-1}}(x)<h^{i_{k}}(x)$, 
and since $h$ is orientation reversing on $\{x>0\}$, 
that contradicts the fact that 
$h^{i_n}(x)$ is the biggest among $h^{i_j}(x)$'s. 

Those facts mean that only $h^{i_k}(x)$ has the freedom of 
$R$ or $L$. 
Note that the orbits of $p$ and $q$ do not have the critical point, 
and symbol $C$ does not appear.

If $I(p)\ne I(q)$, 
then only the symbols corresponding to the points $f^{i_k}(p)$ and $g^{i_k}(q)$ are different.
By Proposition~\ref{prop:same-order}, 
$I(f^{i_n}(p))$ and $I(g^{i_n}(q))$ are maximal sequences. 
If $I(f^{i_n}(p)) \ne I(g^{i_n}(q))$, 
then by Proposition~\ref{prop:take-mu} (2), 
either $per\left( I(f^{i_n}(p)) \right)$ or $per\left( I(g^{i_n}(q)) \right)$ 
is $n$. 
Therefore we have that 
$per(I(q))=n$ and $I(p)=\mu(I(q))$, 
or $per(I(p))=n$ and $I(q)=\mu(I(p))$ 
by the definition of $\mu$.  
\end{proof}

Note that $per(I(p))$ or $per(I(q))$ can be $n/2$. 
The property \lq\lq same shuffle\rq\rq\  is an open property 
on components of constant period, namely;

\begin{proposition}\label{prop:same-shuffle}
\item[$(1)$] Let $(t,p)$ be a periodic point of minimal period $n$. 
Then there exists a $\delta>0$ such that if $(s,q)$ is a periodic point 
of minimal period $n$ and $(s,q)\in B(\delta,(t,p))$ then 
the shuffles of $(t,p)$ and $(s,q)$ are the same, 
where $B(\delta,(t,p))$ is the $\delta$-disk in $\BR^2$ centered at $(t,p)$. 
\item[$(2)$]
Let $W$ be a connected subset of a periodic point component. 
If $per(x)$ is the same for any $(t,x)\in W$, 
then the shuffle $\sigma(x)$ is the same for any $(t,x)\in W$. 
\end{proposition}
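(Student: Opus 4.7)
The plan is to prove (1) by a direct continuity argument and then deduce (2) immediately from (1) using connectedness. The whole argument rests on one fact: for each fixed iterate count $j$, the map $(s,x) \mapsto f_s^j(x)$ is jointly continuous on $[0,1] \times \R$. This follows from Definition~\ref{def:continuous-family}, which gives joint continuity of $f^0(s,x) = f_s(x)$, by induction on $j$.

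For (1), I would first enumerate the orbit of $p$ under $f_t$ in increasing order as
\[
f_t^{i_1}(p) < f_t^{i_2}(p) < \cdots < f_t^{i_n}(p),
\]
so that $\sigma(t,p) = (i_1,\ldots,i_n)$. Since $per(p)=n$, these $n$ values are genuinely distinct, so the gap
\[
\varepsilon := \tfrac{1}{2}\min_{1 \le k < n}\bigl(f_t^{i_{k+1}}(p) - f_t^{i_k}(p)\bigr)
\]
is strictly positive. Using joint continuity of $(s,x) \mapsto f_s^j(x)$ for each $0 \le j \le n-1$, I can choose $\delta>0$ such that any $(s,q) \in B(\delta,(t,p))$ satisfies $|f_s^j(q) - f_t^j(p)| < \varepsilon$ for every $j$. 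By the choice of $\varepsilon$, this $\varepsilon$-perturbation cannot swap any two of the $n$ orbit points, so the order of the iterates of $q$ under $f_s$ matches that of $p$ under $f_t$. Provided $(s,q)$ is itself a periodic point of minimal period $n$ (which is assumed in the hypothesis, ensuring its orbit consists of $n$ distinct points), the shuffles therefore coincide.

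For (2), the shuffle defines a function $\sigma: W \to \mathcal{S}_n$ where the codomain is a finite set with the discrete topology. Under the standing assumption that $per(x)$ is constantly $n$ on $W$, part (1) applies at every point of $W$, so $\sigma$ is locally constant. A locally constant map on a connected space is constant, giving the claim.

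There is no real obstacle; the proof is essentially just a perturbation-of-distinct-points argument. The only point worth guarding is that Definition~\ref{def:continuous-family} gives continuity of $f_t(x)$ in $(t,x)$ but only says the spatial derivative $\partial_x f_t$ is continuous—no $t$-differentiability is assumed. This is harmless here because only the $C^0$-joint continuity of iterates is needed.
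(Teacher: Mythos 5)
Your proof is correct and takes essentially the same approach as the paper: both rest on joint continuity of the iterate maps $(s,x)\mapsto f_s^j(x)$ to keep the $n$ distinct orbit points separated under perturbation, and then deduce (2) from local constancy of $\sigma$ on the connected set $W$. Your phrasing is slightly more direct (you bound $|f_s^j(q)-f_t^j(p)|$ uniformly in $j$ rather than tracking the sorted orbit and the one-step dynamics as the paper does), but the underlying idea and the use of minimal period $n$ to get $n$ genuinely distinct points are the same.
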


\begin{proof}
\item[(1)] 
Let $p_1<\cdots<p_n$ be the orbit of $p$ and 
$B(\epsilon,p_i)$ the $\epsilon$-neighborhood of $p_i$ in $\BR$. 
It is clear that there is an $\epsilon>0$ satisfying the following properties.
\begin{description}
\item[(i)] $B(\epsilon,p_i)$'s are disjoint. 
\item[(ii)] For any $B(\epsilon,p_i)$, 
$f_t(B(\epsilon,p_i))$ has an intersection with only $B(\epsilon, f_t(p_i))$. 
\end{description}
Then, also it is clear that if we take a $\delta>0$ small enough, 
any periodic point $(s,q)$ of period $n$ contained in $B(\delta, (t,p))$ satisfies 
the following. 
\begin{description}
\item[(i)] Let $q_1<\cdots <q_n$ be the orbit of $q$. 
Then $q_i \in B(\epsilon/2,p_i)$ for any $i$. 
\item[(ii)] $f_s(q_i) \in B(\epsilon/2,f_t(p_i))$ for any $i$. 
\end{description}
That means that the shuffles of $(s,q)$ and $(t,p)$ are the same. 

\item[(2)] 
Since there are only a finite of number of shuffles 
for a fixed period, 
$W$ can be divided into a finite number of disjoint subsets 
$W_1, \ldots , W_k$ such that the shuffle is the same on each $W_i$. 
(1) means that each $W_i$ is an open set because the period is the same on $W$. 
Therefore, $W$ is a disjoint union of open sets $W_i$'s. 
For $W$ is connected, we have $k=1$, namely, 
the shuffle is constant on $W$. 
\end{proof}

The main point in the proof of our theorem is that 
if a periodic point component $V$ of period $n$ contains only points of 
period $n$ and $n/2$, 
then shuffles of points of period $n$ in $V$ must be either 
the same or $n/2$-shift each other. 
Let us define $n/2$-shift of shuffle more precisely.

\begin{definition}
Let $\sigma=(i_1, \ldots ,i_n) \in \mathcal{S}_n$ and $n$ be even. 
We define $n/2$-shift of $\sigma$ by 
$\gamma(\sigma)=([i_1+n/2], \ldots ,[i_n+n/2])$, 
where $[k]=k \ {\rm mod} \ n$. 
\end{definition}

Note that if $x$ is a periodic point of period $n$ and 
$n$ is even, 
then $\gamma(\sigma(x)) = \sigma(f^{n/2}(x))$. 
Also, it is clear that $\gamma(\gamma(\sigma))=\sigma$. 
Therefore, we can define that $\sigma$ is equivalent to $\rho$  
if either $\sigma=\rho$ or $\gamma(\sigma)=\rho$. 
We denote the equivalence class of $\sigma$ by $[\sigma]$, 
and call it the {\em shuffle class} of $\sigma$. 
The following proposition is essential in the proof of our theorem.

\begin{proposition}\label{prop:half-shift}  
Let $n$ be even, $V$ a periodic point component of period $n$ and 
$\min\{\,per(x) \,|\, (t,x)\in V\,\}=n/2$. 
\begin{description}
\item[$(1)$] For any $(t_1,p_1), (t_2,p_2) \in V$, 
if $per(p_1)=per(p_2)=n$ then $[\sigma(p_1)]=[\sigma(p_2)]$.
\item[$(2)$] If the orbit of $p$ for $(s,p)\in V$ does not contain 
the critical point and $per(I(p))=n$, 
then $per(\mu(I(p)))=n/2$.
\end{description}
\end{proposition}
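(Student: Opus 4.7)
For part (1), my plan is to extend the shuffle class, already locally constant on period $n$ points of $V$ by Proposition~\ref{prop:same-shuffle}, to a locally constant map on all of $V$ and invoke connectedness. The new content lies at the points of $V_{n/2}$. Fix $(t_0,p_0)\in V_{n/2}$; by Proposition~\ref{prop:diff-is-minus-one}(1) we have $(f_{t_0}^{n/2})'(p_0)=-1$, so the chain rule gives $f_{t_0}'(o_j)\neq 0$ at every orbit point $o_j:=f_{t_0}^j(p_0)$, forcing each $o_j\neq 0$ and making $f_{t_0}$ a local diffeomorphism near each $o_j$ with a definite sign of derivative. For a period $n$ point $(t,q)\in V$ close to $(t_0,p_0)$, the orbit of $q$ under $f_t$ decomposes into $n/2$ disjoint pairs $\{f_t^j(q),f_t^{j+n/2}(q)\}$ clustered near the distinct $o_j$. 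The coarse ordering between pairs is forced by the ordering of the $o_j$, while within each pair the sign $\epsilon_j=\mathrm{sign}(f_t^j(q)-f_t^{j+n/2}(q))$ satisfies $\epsilon_{j+1}=\mathrm{sign}(f_{t_0}'(o_j))\cdot\epsilon_j$ by the mean value theorem, so all $\epsilon_j$ are determined by $\epsilon_0$. Replacing $q$ by $f_t^{n/2}(q)$ flips $\epsilon_0$, hence every $\epsilon_j$, conjugating $\sigma(q)$ by $\gamma$; therefore every period $n$ point of $V$ in a neighborhood of $(t_0,p_0)$ has shuffle in a single class $c(t_0,p_0)$ depending only on the ordering of the $o_j$ and the signs $f_{t_0}'(o_j)$. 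Extending $[\sigma]$ by $c(\cdot)$ on $V_{n/2}$ produces a map $\Sigma:V\to\{\text{shuffle classes}\}$ that is locally constant into a discrete set, hence constant on the connected set $V$, giving $[\sigma(p_1)]=[\sigma(p_2)]$.

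For part (2), I combine (1) with Propositions~\ref{prop:shuffle} and~\ref{prop:take-mu}. Because the period of $V$ is $n$, $V_{n/2}\neq\emptyset$ by hypothesis, and $V_{n/2}$ is closed in $V$, connectedness of $V$ prevents $V_{n/2}$ from being open, so there exist $(t_0,p_0)\in V_{n/2}$ and period $n$ points $(t_k,q_k)\in V$ with $(t_k,q_k)\to(t_0,p_0)$. For $k$ large the orbit of $q_k$ stays close to the orbit of $p_0$, which avoids $0$, so the orbit of $q_k$ avoids $0$; moreover $f_{t_k}^j(q_k)$ and $f_{t_k}^{j+n/2}(q_k)$ both lie near $o_j\neq 0$, forcing the same sign and hence $I(q_k)_j=I(q_k)_{j+n/2}$, so $per(I(q_k))\mid n/2$. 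By (1), $[\sigma(p)]=[\sigma(q_k)]$; replacing $q_k$ by $f_{t_k}^{n/2}(q_k)$ if necessary (an operation that leaves the itinerary unchanged, because $per(I(q_k))\mid n/2$ makes $\mathcal{S}^{n/2}(I(q_k))=I(q_k)$), we may assume $\sigma(p)=\sigma(q_k)$. Applying Proposition~\ref{prop:shuffle} to $p$ and $q_k$, options (a) and (b) are excluded by $per(I(p))=n>n/2\ge per(I(q_k))$, leaving (c) $I(q_k)=\mu(I(p))$, so $per(\mu(I(p)))\mid n/2$; combined with Proposition~\ref{prop:take-mu}(3) this forces $per(\mu(I(p)))=n/2$.

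The hard part is the local analysis at period $n/2$ points in (1): with only a $C^1$ hypothesis the local shape of $G_n^{-1}(0)$ near such a point can be very irregular, and no standard period-doubling normal form is available. The argument nevertheless goes through because the shuffle class is a purely combinatorial invariant, completely determined by the ordering of the orbit of $p_0$ together with the signs $f_{t_0}'(o_j)$ along that (automatically noncritical) orbit, and both data are stable under small $C^1$ perturbations of $(t,q)$.
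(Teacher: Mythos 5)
Your proof is correct, and for part (1) it takes a genuinely different (and arguably cleaner) route to the same destination. The paper proves (1) by cutting $V_{n/2}$ into finitely many compact pieces $K_i$ of constant period-$n/2$ shuffle and $V-V_{n/2}$ into finitely many open pieces $E_j$ of constant period-$n$ shuffle class, then establishing (Lemmas~\ref{lemma:KiEj} and \ref{lemma:separating}) that each $K_i$ meets the closure of exactly one $E_j$ and that the resulting blocks $H_i = K_i \cup E_{t_i}$ have disjoint closures, whence connectedness of $V$ forces a single block. You instead define an explicit shuffle class $c(t_0,p_0)$ at each $(t_0,p_0)\in V_{n/2}$ from the ordering of its orbit and the signs $f_{t_0}'(o_j)$, show that the map $\Sigma$ obtained by extending $[\sigma]$ by $c$ is locally constant, and conclude directly by connectedness. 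The decisive local analysis is the same in both: since $(f_{t_0}^{n/2})'(p_0)=-1$ the orbit of $p_0$ avoids the critical point, the orbit of a nearby period-$n$ point splits into pairs near the $o_j$, the coarse ordering is forced, and the intra-pair signs $\epsilon_j$ propagate by the sign of $f_{t_0}'(o_j)$ and are globally flipped by the $n/2$-shift. Your construction has the advantage of tying the class at a period-$n/2$ point to the actual orbit data rather than to the shuffle alone, which quietly sidesteps a bookkeeping point the paper leaves implicit (that the period-$n/2$ shuffle together with $(f^{n/2})'=-1$ determines the nearby period-$n$ shuffle class, so distinct $K_i$'s cannot feed the same $E_j$); the trade-off is that you should state the (easy, continuity-based) verification that $c$ itself is locally constant on $V_{n/2}$, since local constancy of $\Sigma$ at a point of $V_{n/2}$ requires checking both the period-$n$ and the period-$n/2$ points nearby. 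For part (2), your argument is essentially the paper's: find period-$n$ points $(t_k,q_k)\to(t_0,p_0)\in V_{n/2}$ with orbits avoiding $0$, observe $per(I(q_k))\mid n/2$, normalize $\sigma(p)=\sigma(q_k)$ by an $n/2$-shift (which leaves $I(q_k)$ unchanged), eliminate cases (a),(b) of Proposition~\ref{prop:shuffle}, and finish with Proposition~\ref{prop:take-mu}(3). The one small simplification you make over the paper here is replacing the paper's compactness-based choice of a uniform neighborhood $U$ of $V_{n/2}$ by a direct boundary-point argument; both are valid.
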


\begin{proof}(1) 
We write $V_{n/2}=\{\,(t,x)\in V\,|\, per(x)=n/2\,\}$. 
Note that $V_{n/2}\subset \{\,t<1\,\}$, 
because by Proposition~\ref{prop:diff-is-minus-one}(1), 
$(f^{n/2}_t)'(x)=-1$ for $(t,x)\in V_{n/2}$, and 
for $t=1$, any periodic point must be hyperbolic. 
Since $V_{n/2}$ is a bounded closed set, 
it is compact. 

Let $V_{n/2}=K_1 \cup \cdots \cup K_v$ be the decomposition of $V_{n/2}$ 
into the sets of the same shuffle. 
Namely, the shuffles of points in each $K_i$ are the same, 
and for $K_i \ne K_j$, the shuffles of points in $K_i$ and $K_j$ 
are different.
Since there are only a finite number of shuffles of period $n/2$, 
$V_{n/2}=\cup K_i$ is a finite decomposition.
Each $K_i$ is a compact set, 
because by Proposition~\ref{prop:same-shuffle} (1), 
each $K_j$ is open in $V_{n/2}$ and $\cup_{j\ne i} K_j$ is open in $V_{n/2}$. 
Therefore, each $K_i$ is closed in $V_{n/2}$ and since $V_{n/2}$ is compact, 
$K_i$ is compact. 

On the other hand, 
since $V_{n/2}$ is closed in $V$, 
$V-V_{n/2}$ is an open set in $V$. 
Since the minimal period of any point in $V-V_{n/2}$ is $n$ 
and there are only a finite number of shuffles of length $n$, 
$V-V_{n/2}$ is divided into a finite number of disjoint subsets 
$\{E_j\}$ of the same shuffle class. 
By Proposition~\ref{prop:same-shuffle} (1), 
each $E_j$ is an open set in $V$. 
We claim the following lemma.

\begin{lemma}\label{lemma:KiEj}
For any $K_i$, there exists a unique $E_j$ such that 
$K_i \cap \overline{E_j}\ne \emptyset$. 
\end{lemma}

If this lemma is true, then for each $K_i$ there is a unique $E_{t_i}$ 
such that $K_i \cap \overline{E_{t_i}}\ne \emptyset$. 
We define $H_i=K_i\cup E_{t_i}$ for all $1\leq i \leq v$. 
And let $H_0=E_{j_1}\cup \cdots \cup E_{j_p}$ be the union of 
$E_{j_k}$'s which do not have a $K_i$ such that 
$K_i \cap \overline{E_{j_k}}\ne \emptyset$. 
Then $V=H_0\cup H_1 \cup \cdots \cup H_v$. 
We claim the following lemma. 
We give the proofs of those lemmas at the end of this section.

\begin{lemma}\label{lemma:separating}
For any $0\leq i \leq v$ and $0\leq j \leq v$, if $i\ne j$ 
then $\overline{H_i} \cap H_j=\emptyset$.
\end{lemma}

Since $V$ is connected, 
this lemma says that only one $H_i$ exists and $V=H_i$. 
Note that $V$ is not $H_0$ because $V_{n/2}$ is not empty. 
$V=H_i=K_i\cup E_{t_i}$ means that $[\sigma(p_1)]=[\sigma(p_2)]$ 
for any $(t_1,p_1), (t_2,p_2) \in V$ of $per(p_1)=per(p_2)=n$, 
and this proves (1). 

(2) As mentioned above, 
$(f^{n/2}_t)'(x)=-1$ at any $(t,x)\in V_{n/2}$. 
Since $V_{n/2}$ is compact, 
there is a neighborhood of $U$ of $V_{n/2}$ in $\BR^2$ such that 
the orbit of any point in $V \cap U$ does not contain the critical point.
Since $V$ is connected, there exists a $(t,x)\in (V-V_{n/2})\cap U$ 
sufficiently close to $V_{n/2}$ such that $per(x)=n$ and $per(I(x))=n/2$. 
Since $per(I(p))=n$, $per(p)=n$. 
It follows from (1) that $[\sigma(p)]=[\sigma(x)]$. 
Thus either $\sigma(p)=\sigma(x)$ or 
$\sigma(p)=\gamma(\sigma(x))$. 
If $\sigma(p)=\sigma(x)$, 
then by Proposition~\ref{prop:shuffle},
either $I(p)=I(x)$, or $per(I(x))=n$ and $I(p)=\mu(I(x))$, 
or $per(I(p))=n$ and $I(x)=\mu(I(p))$. 
Since $per(I(x))=n/2$ and $per(I(p))=n$, 
we have $I(x)=\mu(I(p))$. 
Therefore $per(\mu(I(p)))=n/2$ in this case. 
If $\sigma(p)=\gamma(\sigma(x))=\sigma(f_t^{n/2}(x))$, 
then by Proposition~\ref{prop:shuffle} again, 
either $I(p)=I(f_t^{n/2}(x))$, or $per(I(f_t^{n/2}(x)))=n$ 
and $I(p)=\mu(I(f_t^{n/2}(x)))$, 
or $per(I(p))=n$ and $I(f_t^{n/2}(x))=\mu(I(p))$. 
Since $per(I(f_t^{n/2}(x)))=n/2$ and $per(I(p))=n$, 
we have $I(f_t^{n/2}(x))=\mu(I(p))$. 
Therefore $per(\mu(I(p)))=n/2$ in this case too.
\end{proof}

\begin{proof}[Proof of Lemma~\ref{lemma:KiEj}]
First of all, we prove that for any $K_i$, 
there exists an $E_j$ such that $K_i \cap \overline{E_j}\ne \emptyset$. 

Suppose that there exists a $K_i$ such that 
it does not have an intersection with any $\overline{E_j}$. 
Let $K=K_{i_1}\cup \cdots \cup K_{i_\ell}$ be the union of all 
such $K_i$'s, and $K'=K_{j_1}\cup \cdots \cup K_{j_m}$ the union of all 
$K_i$'s which have an intersection with some $\overline{E_j}$. 
Then, $\left(\cup \overline{E_j}\right) \cup K'$ and $K$ are disjoint closed sets, 
and the union is $V$. 
That contradicts the connectivity of $V$. 

Secondly, we show that such $E_j$ is unique. 
If there is a $(t,x)\in K_i \cap \overline{E_j}$, 
then a sequence of periodic points of period $n$ in $E_j$ 
converges to $(t,x) \in K_i$. 
In such case, as mentioned in the proof of 
Proposition~\ref{prop:period-changes}, 
the way of convergence is joining the points of the orbit 
two by two from the smallest one. 
Since $(f^{n/2}_t)'(x)=-1$ for $(t,x)\in V_{n/2}$, 
there is not the critical point in the orbit of $x$. 
Therefore, the itinerary of periodic point of $E_j$ converging 
to $x$ is the same as $I(x)$ if it is sufficiently close to $x$. 
Moreover, $f_t$ is a local homeomorphism on a neighborhood of each point of 
the orbit of $x$, and whether $f_t$ and nearby $f_s$ are 
orientation-preserving or not on them is defined uniquely by $I(x)$. 
That means that the shuffle class of $E_j$ is uniquely defined 
by $K_i$. 
\end{proof}

\begin{proof}[Proof of Lemma~\ref{lemma:separating}]
Suppose that $\overline{H_i}\cap H_j \ne \emptyset$. 
This means that there is a sequence of points $x_\ell$ in $H_i$ 
converging to a point $p\in H_j$. 

$p\in K_j$ or $p\in E_{t_j}$. 
By taking a subsequence, we can assume that 
$x_\ell \in K_i$ or $x_\ell \in E_{t_i}$ for all $\ell$. 
(If $H_i=H_0$ or $H_j=H_0$, then $K_i=\emptyset$ or $K_j=\emptyset$ 
respectively.)
However since $K_i$ is compact, points in $K_i$ do not converge to 
a point in $E_{t_j}$ or $K_j$. 
Therefore there are only the following two cases. 
\begin{itemize}
\setlength{\itemsep}{0cm} 
\setlength{\parskip}{0pt}
\item[(i)] $x_\ell \in E_{t_i}$ and $p\in E_{t_j}$. 
\item[(ii)] $x_\ell \in E_{t_i}$ and $p\in K_j$. 
\end{itemize}
By Proposition~\ref{prop:same-shuffle} (1), 
(i) does not hold. 
(ii) means $\overline{E_{t_i}} \cap K_j \ne \emptyset$ and 
that contradicts Lemma~\ref{lemma:KiEj}. 
\end{proof}

\section{Proof of Theorem~\ref{main-theorem}}\label{sec:proof-main-theorem}

\begin{proof}
{\noindent\bf (i) } 
First, we shall prove that if $p, q\in G_n^{-1}(0)\cap\{t=1\}$ ($p\ne q$) 
are on the same periodic point component and $per(p)=per(q)=n$, 
then $I(p)=\nu(I(q))$.

Let $V$ be the connected component of $G_n^{-1}(0)$ containing $p$ and $q$. 
Write $k=\min\{\, per(x)\,|\,(t,x)\in V\,\}$. 
Then either $k=n$ or $k<n$. 

\ 

{\noindent\bf Case 1. } $k=n$. 

By Proposition~\ref{prop:same-shuffle} (2), 
the shuffle $\sigma(x)$ is the same for any $(t,x)\in V$. 
The orbits of $p$ and $q$ do not contain the critical point of $f_1$, 
because $f_1$ is a horseshoe. 
Therefore, by Proposition~\ref{prop:shuffle}, 
either $I(p)=I(q)$ or $I(p)=\mu(I(q))$ or $I(q)=\mu(I(p))$. 
If $I(p)=I(q)$, then $p$ and $q$ must be identical because 
$f_1$ is a horseshoe again. 
We have $I(p)=\mu(I(q))$ or $I(q)=\mu(I(p))$. 
Since $f_1$ is a horseshoe, 
$per(x)$ and $per(I(x))$ must be identical 
for any periodic point $x$. 
Therefore, by Proposition~\ref{prop:mu-is-involutive}, 
\lq\lq $I(p)=\mu(I(q))$\rq\rq\  and  
\lq\lq $I(q)=\mu(I(p))$\rq\rq\  are equivalent. 
By the definition of $\nu$, we get $I(p)=\nu(I(q))$. 

\ 

{\noindent\bf Case 2. } $k<n$. 

In this case, by Proposition~\ref{prop:k-is-the-half-of-n}, 
$n$ is even and $k=n/2$. 
Then by Proposition~\ref{prop:half-shift} (1), 
$[\sigma(p)]=[\sigma(q)]$. 

If $\sigma(p)=\sigma(q)$, then 
since $f_1$ is a horseshoe, 
there is no critical point in the orbits of $p$ and $q$. 
By Proposition~\ref{prop:shuffle}, either 
$I(p)=I(q)$, or $per(I(q))=n$ and $I(p)=\mu(I(q))$, 
or $per(I(p))=n$ and $I(q)=\mu(I(p))$. 
However all those cases are impossible. 
Because first of all $I(p) \ne I(q)$. 
Secondly, by Proposition~\ref{prop:half-shift} (2), 
we have $per(\mu(I(p)))=n/2$ and $per(\mu(I(q)))=n/2$, 
because $per(I(p))=n$ and $per(I(q))=n$. 
Thus, both $I(p)=\mu(I(q))$ and $I(q)=\mu(I(p))$ are impossible.

Hence we have only 
the case $\sigma(p)=\gamma(\sigma(q))=\sigma\left(f_1^{n/2}(q)\right)$. 
By Proposition~\ref{prop:shuffle}, either 
$I(p)=I\left(f_1^{n/2}(q)\right)$, 
or $per\left( I \left( f_1^{n/2}(q)\right)\right)=n$ 
and $I(p)=\mu\left(I \left(f_1^{n/2}(q)\right)\right)$, 
or $per(I(p))=n$ and $I\left(f_1^{n/2}(q)\right)=\mu(I(p))$. 
But the second and the third cases do not hold, 
because by Proposition~\ref{prop:half-shift} (2) again, 
we have 
$per \left( \mu\left(I \left(f_1^{n/2}(q)\right)\right)\right)=n/2$ 
and $per \left( \mu(I(p)) \right)=n/2$ because 
$per \left( I \left(f_1^{n/2}(q)\right)\right)=n$  
and $per (I(p))=n$. 
Therefore, 
we have only the case $I(p)=I\left(f_1^{n/2}(q)\right)$. 
Since $f_1$ is a horseshoe, we have  $p=f_1^{n/2}(q)$. 
This means $I(p)=\nu(I(q))$ and $I(q)=\nu(I(p))$ by the definition of $\nu$. 

\ 

{\noindent\bf (ii) } 
 Suppose that $I(p)=\nu(I(q))$. 
Let $V$ be the periodic point component containing $(1,q)$. 
By Proposition~\ref{prop:two-points}, 
$V\cap \{t=1\}$ must have at least two points. 
Let $(1,q')$ be one of other points. 
Note that $per(q')=n$ because of the same reason 
as mentioned in the first part of the proof of 
Proposition~\ref{prop:two-points}. 
Since $(1,q)$ and $(1,q')$ are on the same periodic point component $V$, 
by the above argument, $I(q')=\nu(I(q))=I(p)$. 
Then $q'$ must be $p$, 
because $f_1$ is a horseshoe and 
there is only one point whose itinerary is $I(p)$. 
\end{proof}

\section{The proof of Proposition~\ref{prop:connecting-curve}}\label{section-proof-connecting-curve}

Recall some definitions of dimensions. 
For a topological space $X$, the Lebesgue covering dimension $\dim X$ of $X$ is 
less than or equal to $n$ if each finite open cover of $X$ has 
a refinement $\mathcal{V}$ such that no point is included in 
more than $n + 1$ elements of $\mathcal{V}$. 
A small inductive dimension ${\rm ind}\,X$ of a topological space $X$ is 
defined as follows: ${\rm ind}(\emptyset) = -1$. 
By induction, ${\rm ind}\, X \leq n$ if for any point $x \in X$ and 
any open neighborhood $U$ of $x$, there is an 
open neighborhood $V$ of $x$ with $\overline{V} \subseteq U$ such 
that ${\rm ind}( \partial V ) \leq n -1$, 
where the boundary $\partial V$ of $V$ is 
defined by $\partial V := \overline{V} - \mathrm{int}V$. 
A large inductive dimension ${\rm Ind}\, X$ of a topological space $X$ is 
defined as follows: ${\rm Ind}(\emptyset) = -1$. 
By induction, ${\rm Ind}\, X \leq n$ if for any open subset $U$ of $X$ and 
for any closed subset $F \subseteq U$, there is 
an open neighborhood $V$ of $F$ with $\overline{V} \subseteq U$ such 
that ${\rm Ind}( \partial V )\leq n-1$.
By dimension, we mean the small inductive dimension. 
Recall that Urysohn's theorem 
says $\dim X =  {\rm ind}\, X = {\rm Ind}\, X$ for a normal space $X$. 
Note that a metrizable space is normal and so these three dimensions correspond to each other for a metrizable space. 
A compact metrizable space $X$ whose inductive dimension is $n > 0$ is an $n$-dimensional Cantor-manifold if the complement $X - L$ for any closed subset $L$ of $X$ whose inductive dimension is less than $n - 1$ is connected.  
It's known that a compact connected manifold is a Cantor-manifold \cite{HM}\cite{T}. 

By a decomposition, we mean a family $\mathcal{F} $ of pairwise disjoint nonempty subsets of a set $X$ such that $X = \sqcup \mathcal{F}$, where $\sqcup$ denotes a disjoint union.  
Let $\mathcal{F} $ be a decomposition of a topological space $X$. 
For any $x \in X$, denote by $\F(x)$ the element of $\mathcal{F}$ containing $x$. 
For a subset $V$ of $X$, write the saturation $\F(V)  := \bigcup_{x \in V} {\F}(x)$. 
A subset $V \subseteq X$ is saturated if $V = \F(V)$. 
The decomposition $\mathcal{F}$ of a topological space $X$ is upper semicontinuous if each element of $\mathcal{F} $ is both closed and compact and for any $L \in \mathcal{F} $ and for any open neighborhood $U \subset X$ of $L$ there is a saturated neighborhood of $L$ contained in $U$. 
Epstein has shown the following equivalence. 
\begin{lemma}[Remark after Theorem 4.1 \cite{E}]\label{Epstein}
The following are equivalent for a decomposition $\F$ into connected compact elements 
of a locally compact Hausdorff space $X$:  \\
(1) 
$\mathcal{F}$ is upper semicontinuous.    
\\
(2) 
The quotient space $X/\mathcal{F}$ is Hausdorff.  
\\
(3) 
The canonical projection $p: X \to X/\mathcal{F}$ is closed $(\mathrm{i.e.}$ the saturation of a closed subsets is closed$)$. 
\end{lemma}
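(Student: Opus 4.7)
The plan is to establish the equivalences via the cycle (1) $\Rightarrow$ (3) $\Rightarrow$ (2) $\Rightarrow$ (1). The first two implications are short and rely only on formal properties, while the last implication is where the hypotheses \emph{connected} elements and \emph{locally compact} Hausdorff ambient space are both genuinely needed.

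For (1) $\Rightarrow$ (3): take a closed set $C \subseteq X$ and suppose $x \notin \F(C)$. Then $\F(x)$ is a compact subset of the open set $X - C$, so upper semicontinuity produces a saturated open neighborhood $V$ of $\F(x)$ with $V \subseteq X - C$. Since $V$ is saturated, $V \cap \F(C) = \emptyset$ as well, so $x$ lies in the open complement of $\F(C)$; thus $\F(C) = p^{-1}(p(C))$ is closed, giving $p(C)$ closed. For (3) $\Rightarrow$ (2): given $[x]\neq[y]$, the disjoint compact sets $\F(x), \F(y)$ can be separated by disjoint open sets $U, V$ in the Hausdorff space $X$. Then $U' := X/\F - p(X-U)$ and $V' := X/\F - p(X-V)$ are open (using that $p$ is closed); routine bookkeeping shows $[x] \in U'$, $[y] \in V'$, and any $[z] \in U' \cap V'$ would force $\F(z) \subseteq U \cap V = \emptyset$, contradicting $z \in \F(z)$.

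The main step is (2) $\Rightarrow$ (1). Fix $L \in \F$ and an open neighborhood $U \supseteq L$. Local compactness together with Hausdorffness lets me choose an open $W$ with $L \subseteq W \subseteq \overline{W} \subseteq U$ and $\overline{W}$ compact. Let $A := \overline{W} - W$; this is compact, so $p(A)$ is compact in $X/\F$ and hence closed (this is where (2) is used). Thus $\F(A) = p^{-1}(p(A))$ is closed, and I propose the candidate saturated neighborhood
\[
V := W - \F(A),
\]
which is open and contains $L$ (since $L \subseteq W$ and $L \cap A = \emptyset$ together with saturation of $\F(A)$ forces $L \cap \F(A) = \emptyset$). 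The nontrivial claim is that $V$ is saturated, and this is exactly where connectedness of the elements enters: for $x \in V$, write $\F(x) = (\F(x) \cap W) \sqcup (\F(x) \cap (X - \overline{W}))$, which is a legitimate clopen decomposition of $\F(x)$ because $\F(x)$ cannot meet $A$ (else $x \in \F(A)$). Since $x \in W$, the right-hand piece must be empty by connectedness, so $\F(x) \subseteq W$, hence $\F(x) \subseteq V$. Therefore $V$ is a saturated open neighborhood of $L$ with $V \subseteq W \subseteq U$, establishing upper semicontinuity.

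The main obstacle I anticipate is the last step: assembling the saturated neighborhood $V$ in the proof of (2) $\Rightarrow$ (1). One has to juggle three ingredients simultaneously — local compactness (to produce the ``buffer'' set $\overline{W}\setminus W$), Hausdorffness of the quotient (to close up $p(A)$), and connectedness of the fibers (to prevent $\F(x)$ from escaping across the buffer) — and drop any one of them and the construction fails. All remaining verifications are routine, once the right set $V$ has been identified.
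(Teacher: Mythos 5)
Your proof is correct. Note that the paper does not actually prove this lemma -- it is cited verbatim from Epstein's paper (the remark after Theorem 4.1 in \cite{E}), so there is no internal proof to compare against. Your cycle $(1) \Rightarrow (3) \Rightarrow (2) \Rightarrow (1)$ is the standard route, and each step is sound: the passage from $x \notin \F(C)$ to $\F(x) \cap C = \emptyset$, the separation of disjoint compact fibers in a Hausdorff space, and the construction of the ``buffer'' annulus $A = \overline{W} - W$ for $(2) \Rightarrow (1)$ are all correct. The only cosmetic point is that the paper's definition of upper semicontinuity asks for a saturated \emph{neighborhood}, not necessarily an open one; your $(1)\Rightarrow(3)$ step silently assumes openness, but this is harmless since one can always pass to the interior, and your $(2)\Rightarrow(1)$ step in fact produces a saturated \emph{open} neighborhood, which is stronger than required. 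You have also correctly isolated where each hypothesis enters: local compactness to get $\overline{W}$ compact, Hausdorffness of $X/\F$ to close up $p(A)$, and connectedness of $\F(x)$ to prevent it from straddling the annulus.
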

By a continuum, we mean a nonempty compact connected metrizable space. 
A subset $C$ in a topological space $X$ is separating if the complement $X - C$ is disconnected. 
Define a filling $\mathrm{Fill}_{\R^2}(W) \subseteq \R^2$ of a continuum $W \subseteq \R^2$ as follows:  
$ p \in \mathrm{Fill}_{\R^2}(W)$ if either $p \in W$ or the connected component of $\R^2 - W$ containing $p$ is an open disk whose boundary is contained in $W$. 
Here an open disk means a nonempty simply connected open subset in a plane or a sphere. 

Recall that the boundary of an open disk in a plane can be disconnected in general. 
On the other hand, boundedness implies the following observation for the connectivity of the boundaries of a bounded open disk in a plane and an open disk in a sphere. 

\begin{lemma}\label{connectivity_boundary}
The boundaries of a bounded open disk in a plane and an open disk in a sphere are connected. 
\end{lemma}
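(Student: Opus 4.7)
The plan is to reduce both assertions to one on the sphere and then realize $\partial U$ as a nested intersection of compact connected sets.

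For a bounded simply connected open $U \subset \mathbb{R}^2$, embed $\mathbb{R}^2 \hookrightarrow S^2 = \mathbb{R}^2 \cup \{\infty\}$. Boundedness gives $\infty \notin \overline{U}$, so $\partial_{\mathbb{R}^2} U = \partial_{S^2} U$, and $U$ is also an open disk in $S^2$. Thus it suffices to handle the sphere case: let $U \subsetneq S^2$ be a nonempty simply connected open set and show $\partial U$ is connected (when $U = S^2$ the boundary is empty and the claim is vacuous). Picking any $p \in S^2 - U$, we view $U$ as a proper simply connected open subset of $S^2 - \{p\} \cong \mathbb{C}$; by the Riemann mapping theorem (or the topological classification of simply connected surfaces without boundary) there is a homeomorphism $\phi \colon \mathbb{R}^2 \to U$.

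Set $D_n := \phi(\overline{B(0, n)})$, so that the $D_n$ are closed topological disks in $U$ with $D_n \subseteq \mathrm{int}(D_{n+1})$, $\bigcup_n D_n = U$, and $U - \mathrm{int}(D_n) = \phi(\{z : |z|\geq n\})$ connected. Define $L_n := \overline{U} - \mathrm{int}(D_n) = (U - \mathrm{int}(D_n)) \cup \partial U$, with the closure taken in $S^2$. Each $L_n$ is closed in the compact space $S^2$, hence compact. The crucial step is that each $L_n$ is connected: any $q \in \partial U$ is the limit of a sequence in $U$ that must eventually leave the compact set $D_n \subset U$, so $q \in \overline{U - \mathrm{int}(D_n)}$. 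Hence
\[
U - \mathrm{int}(D_n) \;\subseteq\; L_n \;\subseteq\; \overline{U - \mathrm{int}(D_n)},
\]
and $L_n$ is connected because it is sandwiched between a connected set and its closure.

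Finally, $\{L_n\}$ is a nested decreasing sequence of nonempty compact connected sets (nonempty since $\partial U \subseteq L_n$ and $\partial U \neq \emptyset$ when $U \neq S^2$), and $\bigcap_n L_n = \overline{U} - \bigcup_n \mathrm{int}(D_n) = \overline{U} - U = \partial U$. By the standard theorem that a nested intersection of compact connected subsets of a Hausdorff space is connected, $\partial U$ is connected. The principal technical input is the identification $U \cong \mathbb{R}^2$; everything else is a clean sandwich-plus-nested-intersection argument, so the main question is whether one is content to invoke Riemann mapping or surface classification, or to build the exhaustion by closed disks from scratch using more elementary planar topology.
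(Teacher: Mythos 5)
Your proof is correct but takes a genuinely different route from the paper's. The paper argues directly by contradiction: it supposes $\partial D$ splits into disjoint nonempty closed sets $A$ and $B$, takes tubular neighborhoods $U_A$ and $U_B$ at half the minimum separation, and uses the radial curves $C(\theta)$ of a homeomorphism from the open unit disk onto $D$, together with compactness of $\partial D$, to locate an annulus at some uniform radius inside the preimage of $(U_A \sqcup U_B) \cap D$; that annulus is then covered by the two disjoint nonempty open preimages of $U_A \cap D$ and $U_B \cap D$, contradicting its connectivity. You instead reduce the plane case to the sphere case via one-point compactification, identify $U$ with $\mathbb{R}^2$ by Riemann mapping (or the classification of simply connected surfaces), and exhibit $\partial U$ as the nested intersection of the compact ``ends'' $L_n = \overline{U} - \mathrm{int}(D_n)$ of a closed-disk exhaustion; each $L_n$ is connected thanks to the sandwich $U - \mathrm{int}(D_n) \subseteq L_n \subseteq \overline{U - \mathrm{int}(D_n)}$, and the standard theorem on nested intersections of nonempty compact connected sets finishes the argument. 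Both approaches hinge on the same nontrivial input --- the existence of a homeomorphism from a model open disk (or $\mathbb{R}^2$) onto $D$ --- but your route trades the paper's somewhat delicate uniform-compactness step (producing the annulus at a single radius $s$ for all directions $\theta$ simultaneously) for a ready-made black-box theorem, and makes the connectivity of each end completely transparent. Unifying the plane and sphere cases at the outset is also a clean touch that the paper handles less explicitly.
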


\begin{proof}
Let $D$ be either a bounded open disk in a plane $\R^2$ or an open disk in $\mathbb{S}^2$ and $p : D_0 \to D$ a homeomorphism from the unit open disk $D_0$ in a plane. 
Then the boundary $\partial D$ is compact. 
Suppose that the boundary $\partial D$ is disconnected.
There are disjoint nonempty closed subsets $A$ and $B$ whose union is $\partial D$.
Put $d := \min \{ d(a,b) \mid a \in A, b \in B \} > 0$.
Then $U_A := B_{d/2}(A)$ and $U_B := B_{d/2}(B)$ are open neighborhoods of $A$ and $B$ respectively such that $U_A \cap D \neq \emptyset$ and $U_B \cap D \neq \emptyset$. 
Denote by $C(\theta)$ the image of a curve $p(\{ (r \cos \theta , r \sin \theta) \mid r \in [0, 1) \})$ for any $\theta \in [0, 2\pi)$.  
For any $\theta \in [0, 2\pi)$, since the curve $C(\theta)$ in the open disk $D$ is closed as a subspace of $D$ (i.e. $\overline{C(\theta)} \cap D = C(\theta)$), the difference $\overline{C(\theta)} - C(\theta)$ is contained in $\partial D$. 
Since any curve from a point in $D$ to a point in the boundary $\partial D$ intersects $U_A \sqcup U_B$, compactness of $\partial D$ implies that the preimage $p^{-1}((U_A \sqcup U_B) \cap D)$ contains an annulus $\A := \{ (r \cos t , r \sin t) \mid t \in \R, r \in (s, 1) \}$ for some $s \in [0,1)$. 
This implies that $p^{-1}((U_A \cap D) \cap \A$ and $p^{-1}((U_B \cap D) \cap \A$ form an open covering of $\A$ and so that $\A$ is disconnected. 
That contradicts that $\A$ is annular.
Thus $\partial D$ is connected.
\end{proof}
Notice that the previous lemma is true for not only two dimensional open disk but also $n$-dimensional open ball. 
The previous lemma implies the following observation. 

\begin{corollary}\label{connectivity_boundary02}
The boundary of an unbounded open disk in a plane is unbounded unless the disk is $\R^2$. 
\end{corollary}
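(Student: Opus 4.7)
The plan is to argue by contradiction: assume $D \subseteq \R^2$ is open, nonempty, simply connected, unbounded, $D \neq \R^2$, and yet $\partial D$ is bounded. Choose $R>0$ with $\partial D \subset B_R(0)$. Then $U := \R^2 \setminus \overline{B_R(0)}$ is connected and disjoint from $\partial D$, so it lies in $\R^2 \setminus \partial D = D \sqcup \mathrm{int}(D^c)$; connectedness forces it into exactly one of those two pieces. The alternative $U \subseteq \mathrm{int}(D^c)$ would yield $D \subseteq \overline{B_R(0)}$, contradicting unboundedness. Hence $U \subseteq D$, and therefore $D^c \subseteq \overline{B_R(0)}$ is compact in $\R^2$ and nonempty (since $D \neq \R^2$).

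Next I would pass to the one-point compactification $\mathbb{S}^2 = \R^2 \cup \{\infty\}$. Because $U \subseteq D$, the set $D \cup \{\infty\}$ is open in $\mathbb{S}^2$, so $\mathbb{S}^2 \setminus D = D^c \cup \{\infty\}$ is closed. Moreover, $D^c$ being contained in $\overline{B_R(0)}$ means that the open neighborhood $(\mathbb{S}^2 \setminus \overline{B_R(0)}) \cup \{\infty\}$ of $\infty$ meets $D^c \cup \{\infty\}$ only at $\infty$ itself. Thus $\{\infty\}$ is an isolated point of $D^c \cup \{\infty\}$, and since $D^c \neq \emptyset$, the set $D^c \cup \{\infty\}$ is disconnected.

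Finally I invoke the classical fact that a simply connected open subset of $\mathbb{S}^2$ has connected complement. Applied to $D$, which remains open and simply connected as a subspace of $\mathbb{S}^2$, this forces $D^c \cup \{\infty\}$ to be connected, contradicting the previous paragraph. So the assumption that $\partial D$ is bounded must be false, which is exactly the statement of the corollary.

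The main obstacle is supplying the classical complement-connectedness result, which is not among the lemmas proved earlier in the paper (the preceding Lemma only concerns connectedness of the boundary of a disk, and that alone is too weak here). If one prefers a self-contained argument, this step can be replaced by a winding-number computation: for any $r > R$ the round circle $\gamma := \{|z|=r\}$ lies in $D$ (since $\R^2 \setminus \overline{B_R(0)} \subseteq D$), hence is null-homotopic in $D$ by simple connectedness, so its winding number about any point $p \notin \gamma$ that stays outside the support of the homotopy is zero; but for $p \in D^c \subseteq \overline{B_R(0)}$ the winding number of $\gamma$ about $p$ is $\pm 1$, which is the desired contradiction.
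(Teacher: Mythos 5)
Your argument is correct, but it takes a route different from the paper's and also misassesses the strength of the preceding lemma. The paper proves the corollary by applying Lemma~\ref{connectivity_boundary} to $D$ viewed as an open disk in $\mathbb{S}^2$: since $D$ is open and simply connected, $\partial_{\mathbb{S}^2}D$ is connected; unboundedness of $D$ puts $\infty$ in $\partial_{\mathbb{S}^2}D$, and $D \subsetneq \R^2$ puts a second boundary point in $\R^2$; if $\partial_{\R^2}D$ were bounded, then $\infty$ would be an isolated point of the connected set $\partial_{\mathbb{S}^2}D = \partial_{\R^2}D \cup \{\infty\}$, a contradiction. So your remark that the boundary-connectedness lemma ``is too weak here'' is not right --- it is precisely the tool the paper uses, and it closes the argument in three lines. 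Your version instead invokes the fact that a simply connected open proper subset of $\mathbb{S}^2$ has connected complement (equivalently, your winding-number variant), and locates the contradiction by showing $\infty$ is isolated in $\mathbb{S}^2 \setminus D$ rather than in $\partial_{\mathbb{S}^2}D$. Both routes pass to the sphere and derive a contradiction by isolating $\infty$ in a set that ought to be connected; the difference is whether the load is carried by boundary connectedness (already proved in the paper) or by complement connectedness (not proved there, though your winding-number justification is a clean, self-contained substitute). For consistency with the paper's toolkit, the shorter path is to apply Lemma~\ref{connectivity_boundary} directly.
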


\begin{proof}
Let $\mathbb{S}^2$ be the one-point compactification of $\R^2$, $\infty$ the point at infinity (i.e. $\{ \infty \} = \mathbb{S}^2 - \R^2$), and $D$ an unbounded open disk which is a proper subset of $\R^2$. 
Unboundedness of $D$ implies that the boundary $\partial_{\mathbb{S}^2} D$ of $D$ in $\mathbb{S}^2$ contains $\infty$. 
Since $D \subsetneq \R^2$, the boundary $\partial_{\mathbb{S}^2} D$ contains a point in $\R^2$. 
Lemma \ref{connectivity_boundary} implies that the boundary $\partial_{\mathbb{S}^2} D$ is connected and so  the boundary $\partial_{\R^2} D = \partial_{\mathbb{S}^2} D - \{ \infty \}$ of $D$ in $\R^2$ is unbounded. 
\end{proof}
We show the following property of the complement of a filling. 

\begin{lemma}\label{lem:complement}
The complement of a continuum $W$ on $\R^2$ consists of one unbounded open annulus and bounded open disks, and the complement of the filling $\mathrm{Fill}_{\R^2}(W)$ is an unbounded open annulus on $\R^2$. 
In other words, the complement of $W$ in the one-point compactification $\mathbb{S}^2$ of $\R^2$ consists of open disks, and the complement of the filling $\mathrm{Fill}_{\R^2}(W)$ is an open disk on $\mathbb{S}^2$ containing the point at infinity. 
\end{lemma}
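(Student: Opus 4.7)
The plan is to pass to the one-point compactification $\mathbb{S}^2 = \R^2 \cup \{\infty\}$ and classify the connected components of $\mathbb{S}^2 - W$. First I would record the basic topological fact that for any component $V$ of $\mathbb{S}^2 - W$, the boundary $\partial V$ lies in $W$: since $W$ is closed and $\mathbb{S}^2 - W$ is an open set partitioned into its components, any boundary point of $V$ cannot be an interior point of another component, hence must lie in $W$. Writing $\overline{V} = V \cup \partial V$ we get $W \cup V = W \cup \overline{V}$, and since $W$ and $\overline{V}$ are each connected and meet along $\partial V \neq \emptyset$ (assuming $W$ is a proper subset of $\mathbb{S}^2$), their union is connected.

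Fix a component $U$ of $\mathbb{S}^2 - W$. Using the previous observation I would show that $\mathbb{S}^2 - U$ is connected: it equals $\bigcup_{V \neq U}(W \cup \overline{V})$, a union of connected sets all containing the connected set $W$, hence itself connected. By the classical plane-topology theorem that a connected open subset of $\mathbb{S}^2$ is simply connected if and only if its complement is connected, and that a simply connected proper open subset of $\mathbb{S}^2$ is homeomorphic to an open disk, each component $U$ of $\mathbb{S}^2 - W$ is an open disk. An elementary alternative avoiding Riemann mapping is a direct Jordan-curve argument: any Jordan curve $\gamma \subset U$ separates $\mathbb{S}^2$ into two open disks exactly one of which contains the connected set $W$; the other disk must lie inside $U$, showing $\gamma$ bounds in $U$, and a standard approximation then shows $U$ is simply connected.

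Once the classification is in hand, the lemma follows directly. Components of $\R^2 - W$ not containing $\infty$ are bounded open disks, while the unique component of $\mathbb{S}^2 - W$ containing $\infty$ restricts in $\R^2$ to the unbounded component, which is an open disk with an interior point removed and therefore an open annulus. By Corollary~\ref{connectivity_boundary02} this unbounded component is not itself an open disk in $\R^2$, so by the definition of filling only the bounded components enter $\mathrm{Fill}_{\R^2}(W)$; consequently $\mathbb{S}^2 - \mathrm{Fill}_{\R^2}(W)$ is precisely the open disk of $\mathbb{S}^2 - W$ containing $\infty$, and its restriction to $\R^2$ is the unbounded open annulus, as claimed.

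The main obstacle is the appeal to the classical fact that a connected open subset of $\mathbb{S}^2$ with connected complement is an open disk. Everything else is a bookkeeping exercise once this topological fact is granted. If one wants to stay inside the dimension-theoretic/Cantor-manifold framework being built up in this paper rather than cite the classical result, the crux becomes executing the Jordan-curve argument carefully enough to conclude simple connectivity of each complementary component, which is the only genuinely nontrivial point in the proof.
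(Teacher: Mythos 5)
Your proof is correct and reaches the same classification, but by a genuinely different route. The paper argues by contradiction: it uses the Riemann mapping theorem to equate ``open disk'' with ``simply connected proper open subset of $\mathbb{C}$,'' supposes a bounded component $D$ of $\R^2 - W$ is not simply connected, extracts a non-contractible simple closed curve $\gamma \subset D$, and then applies the Jordan curve theorem to split $W$ into two nonempty open pieces, contradicting connectivity. You instead prove directly that for each component $U$ of $\mathbb{S}^2 - W$, the set $\mathbb{S}^2 - U$ is connected, by writing it as a union of the connected sets $W \cup \overline{V}$ over the other components $V$, all sharing the connected set $W$; you then invoke the classical equivalence, for connected open $U \subset \mathbb{S}^2$, between ``$\mathbb{S}^2 - U$ connected'' and ``$U$ simply connected,'' together with the fact that a simply connected proper open subset of a surface is an open disk. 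Both approaches lean on classical plane topology of comparable depth, but yours replaces the contradiction-plus-Riemann-mapping step with a positive, self-contained connectivity argument and avoids the (implicit in the paper) fact that a non-simply-connected planar domain contains a non-contractible simple closed curve. Your alternative Jordan-curve sketch is essentially the positive form of the paper's argument, and your final bookkeeping — using Corollary~\ref{connectivity_boundary02} to rule out the unbounded component as a disk in $\R^2$, so that only bounded components enter the filling — matches the paper's.
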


\begin{proof}
Since $W$ is bounded and closed, the complement $\R^2 - W$ is the union of bounded open disks with or without punctures and exactly one unbounded connected component $\mathbb{A}$. 
We show that each connected component of $\R^2 - W$ which is bounded is simply connected. 
Indeed, assume that there is a connected component $D \subset \R^2$ of the complement $\R^2 - W$ which is bounded but is not an open disk. 
Since each connected component of $\R^2 - W$ is closed in $\R^2 - W$, we have $\partial D \subseteq W$. 
The Riemann mapping theorem states that each nonempty open simply connected proper subset of $\mathbb{C}$ is conformally equivalent to the unit disk, and so the component $D$ is not simply connected. 
Then there is a simple closed curve $\gamma \subset D$ which is not contractible in $D$. 
By the Jordan curve theorem, the complement $\R^2 - \gamma$ consists of an unbounded open annulus $\mathbb{A}_\gamma$ and an open disk $D_\gamma$ each of which intersects a connected component of the boundary $\partial D \subseteq W$. 
Since $\gamma \subset D \subset \R^2 - W$, the disjoint union $\mathbb{A}_\gamma  \sqcup D_\gamma$ of open subsets is an open neighborhood of $W$, which contradicts the connectivity of $W$. 
Similarly, each connected component of $\mathbb{S}^2 - W$ is simply connected and so the connected component $\mathbb{A}$ is an open annulus because $\mathbb{A}$ is an open disk minus the point at infinity.   
%
By Corollary \ref{connectivity_boundary02}, the boundary of any unbounded open disk in $\R^2$ is unbounded unless the disk is $\R^2$. 
Since $W$ is bounded, each open disk whose boundary is contained in $W$ is bounded and so does not intersect $\mathbb{A}$ because the boundary of any bounded disk intersecting $\mathbb{A}$ intersects $\mathbb{A} \subset \R^2 - W$. 
This implies that $\mathbb{A} \cap \mathrm{Fill}_{\R^2}(W) = \emptyset$. 
Since the boundary of any connected component of $\R^2 - W$ is contained in $W$, we have $\mathrm{Fill}_{\R^2}(W) = \R^2 - \mathbb{A}$ and so $\R^2 - \mathrm{Fill}_{\R^2}(W) = \mathbb{A}$. 
In other words, the complement $\R^2 - \mathrm{Fill}_{\R^2}(W)$ is the unbounded open annulus $\A$. 
\end{proof}
We show that the filling of a continuum is a non-separating continuum. 

\begin{lemma}\label{lem:filling}
The filling of a continuum in a plane is a non-separating continuum. 
\end{lemma}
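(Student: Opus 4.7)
The plan is to set $K := \mathrm{Fill}_{\R^2}(W)$ and extract essentially everything from Lemma \ref{lem:complement}, which already identifies $\R^2 - K$ with the unique unbounded connected component $\A$ of $\R^2 - W$ and gives the structure of the bounded components as open disks with boundaries in $W$. With this in hand, the task reduces to two independent checks: that $K$ is a continuum and that $\R^2 - K$ is connected.

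First I would verify that $K$ is compact. Since $W$ is bounded, there is a closed ball $\overline{B}$ containing $W$; then $\A$ contains $\R^2 - \overline{B}$, so $K = \R^2 - \A \subseteq \overline{B}$ is bounded, and since $\A$ is open, $K$ is also closed in $\R^2$. Metrizability is inherited from $\R^2$, and $K \supseteq W$ is nonempty. The one point requiring attention is connectedness. Using the decomposition $\R^2 - W = \A \sqcup \bigsqcup_i D_i$ from Lemma \ref{lem:complement} into the unbounded annulus and the bounded open disks, one has $K = W \cup \bigcup_i \overline{D_i}$. Each $\overline{D_i}$ is connected as the closure of a connected set. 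Because $D_i$ is a bounded proper open subset of $\R^2$, its boundary $\partial D_i$ is nonempty, and since $D_i$ is a connected component of the open set $\R^2 - W$, we have $\partial D_i \subseteq W$. Hence $\overline{D_i} \cap W \supseteq \partial D_i$ is nonempty, so $W \cup \overline{D_i}$ is connected. The sets $W \cup \overline{D_i}$ all share the connected set $W$, so their union $K$ is connected.

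For the non-separating property, Lemma \ref{lem:complement} gives $\R^2 - K = \A$, which is an open annulus and in particular connected, so $K$ does not separate $\R^2$. I do not anticipate a substantive obstacle, since the main topological content (unique unboundedness, simple connectedness of bounded complementary components, and the identification $\R^2 - \mathrm{Fill}_{\R^2}(W) = \A$) has already been established in Lemma \ref{lem:complement}; the only delicate bookkeeping is confirming that each bounded complementary component has nonempty boundary contained in $W$, which is what lets connectedness propagate from $W$ through the closures $\overline{D_i}$ to the full filling.
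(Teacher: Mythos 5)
Your proof is correct and takes essentially the same approach as the paper's. Both extract compactness and the non-separating property directly from Lemma~\ref{lem:complement} and establish connectedness of the filling by exploiting $\partial D_i \subseteq W$ for each bounded complementary component $D_i$; you phrase this as a direct union-of-connected-sets-sharing-$W$ argument, while the paper argues by contradiction with a separating open cover $U \sqcup V$, but the underlying idea is identical.
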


\begin{proof}
Let $W$ be a continuum in $\R^2$. 
By Lemma \ref{lem:complement}, the filling $F_W := \mathrm{Fill}_{\R^2}(W) \subseteq \R^2$ is the complement of an unbounded open annulus, and so is bounded, closed, and non-separating. 
Moreover, the filling $F_W$ is a disjoint union of $W$ and open disks $U_\lambda$ $(\lambda \in \Lambda)$ whose boundaries are contained in $W$. 
Then it suffices to show that $F_W$ is connected. 
Indeed, assume that $F_W$ is disconnected.  
Then there are two disjoint open subsets $U$ and $V$ whose union is a neighborhood of $F_W$ such that $F_W \cap U \neq \emptyset$ and $F_W \cap V \neq \emptyset$. 
Then $F_W \not\subset U$ and $F_W \not\subset V$. 
The connectivity of $W$ implies either $W \subset U$ or $W \subset V$. 
We may assume that $W \subset U$. 
Fix any $\lambda \in \Lambda$. 
Since $\partial U_\lambda \subset W \subset U$, we have $U_\lambda \cap U \neq \emptyset$. 
Since $U$ and $V$ are disjoint nonempty open subsets with $U_\lambda \subset U \sqcup V$, the connectivity of $U_\lambda$ implies that $U_\lambda \subset U$. 
This means that $F_W = W \sqcup \bigsqcup_{\lambda \in \Lambda} U_\lambda \subset U$, which contradicts $F_W \not\subset U$. 
\end{proof}

We state an observation which is a generalization of a part of the Jordan curve theorem. 

\begin{lemma}\label{inclusion_boundary}
A bounded open disk in a plane which contains the boundary of another bounded open disk in the plane contains another open disk. 
\end{lemma}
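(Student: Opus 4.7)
The statement to prove is: if $D_1$ and $D_2$ are bounded open disks in $\R^2$ with $\partial D_2 \subseteq D_1$, then $D_2 \subseteq D_1$ (so $D_1$ contains \emph{the} other open disk $D_2$). My plan is to deduce this from a connectedness argument anchored in Lemma~\ref{connectivity_boundary}.

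The crux is to establish that the complement $\R^2 - D_1$ is connected. Decompose $\R^2 - D_1 = \partial D_1 \sqcup (\R^2 - \overline{D_1})$. By Lemma~\ref{connectivity_boundary}, $\partial D_1$ is a nonempty connected compact set. For every connected component $C$ of the open set $\R^2 - \overline{D_1}$, the boundary $\partial C$ lies in $\partial(\R^2 - \overline{D_1}) = \partial \overline{D_1}$, which is in turn contained in $\partial D_1$ (a standard fact for open $D_1$, since $D_1 \subseteq \mathrm{int}\,\overline{D_1}$). Because $C$ is a nonempty proper subset of $\R^2$, $\partial C$ is nonempty, so $\overline{C}$ meets $\partial D_1$; hence $C \cup \partial D_1$ is connected as the union of two connected sets with a common limit point. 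Taking the union over all such components $C$, the set $\R^2 - D_1 = \bigcup_C (C \cup \partial D_1)$ is a union of connected sets all containing $\partial D_1$, and is therefore connected.

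Once this is in hand, the rest is short. The hypothesis $\partial D_2 \subseteq D_1$ gives $(\R^2 - D_1) \cap \partial D_2 = \emptyset$, so
\[
\R^2 - D_1 \;\subseteq\; \R^2 - \partial D_2 \;=\; D_2 \,\sqcup\, (\R^2 - \overline{D_2}).
\]
This exhibits $\R^2 - D_1$ as a disjoint union of two relatively open subsets $A := (\R^2 - D_1) \cap D_2$ and $B := (\R^2 - D_1) \cap (\R^2 - \overline{D_2})$. Since both $D_1$ and $\overline{D_2}$ are bounded, any point sufficiently far from the origin lies in $B$, so $B$ is nonempty. Connectedness of $\R^2 - D_1$ then forces $A = \emptyset$, which is exactly $D_2 \subseteq D_1$.

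The main obstacle is the connectedness of $\R^2 - D_1$: because $D_1$ is merely a bounded simply connected open set, its topological boundary can be a wild continuum and $\R^2 - \overline{D_1}$ may have infinitely many bounded components in addition to its unique unbounded component. The plan sidesteps these pathologies by leveraging the connectivity of $\partial D_1$ from Lemma~\ref{connectivity_boundary} together with the elementary observation that every component of $\R^2 - \overline{D_1}$ must touch $\partial D_1$, so that $\partial D_1$ functions as the connective tissue binding the various pieces into one connected complement.
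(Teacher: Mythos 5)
Your proof is correct, and it takes a genuinely different route from the paper. The paper's argument is geometric: it wraps $\partial D$ in a finite union of open balls $U$ with transverse pairwise boundary intersections so that $\overline U \subset D'$ and $\partial U$ is a finite union of simple closed curves, then invokes the Jordan curve theorem inside $D'$ to conclude $\mathrm{Fill}_{\R^2}(U) \subset D'$, and finally sandwiches $\overline D \subset \mathrm{Fill}_{\R^2}(\partial D) \subset \mathrm{Fill}_{\R^2}(U) \subset D'$. Your argument instead isolates a clean point-set fact: the complement $\R^2 - D_1$ of a bounded open disk is connected. You get this by decomposing $\R^2 - D_1 = \partial D_1 \sqcup (\R^2 - \overline{D_1})$, observing (via Lemma~\ref{connectivity_boundary}) that $\partial D_1$ is a connected compactum, and showing that every component $C$ of $\R^2 - \overline{D_1}$ satisfies $\partial C \subseteq \partial D_1$ (this uses $D_1 \subseteq \mathrm{int}\,\overline{D_1}$, valid since $D_1$ is open) and hence attaches to $\partial D_1$. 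Once $\R^2 - D_1$ is known connected, the hypothesis $\partial D_2 \subset D_1$ exhibits $\R^2 - D_1$ as a disjoint union of the two relatively open pieces $(\R^2 - D_1) \cap D_2$ and $(\R^2 - D_1) \cap (\R^2 - \overline{D_2})$; the latter is nonempty by boundedness, so the former is empty, i.e.\ $D_2 \subseteq D_1$ (and then $\overline{D_2} \subseteq D_1$ since $\partial D_2 \subset D_1$ by hypothesis). Your route is more elementary: it avoids the filling machinery, the transversality-and-finite-cover construction, and the Jordan curve theorem, relying only on Lemma~\ref{connectivity_boundary} and basic separation arguments. What the paper's approach buys is consistency with the filling formalism used throughout the section (and it produces $\mathrm{Fill}_{\R^2}(U)$ as an explicit witness), but as a proof of this particular lemma your argument is both shorter and self-contained.
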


\begin{proof}
Let $D \subset \R^2$ be a bounded open disk and $D' \subset \R^2$ a bounded open disk with $\partial D \subset D'$. 
By Lemma \ref{connectivity_boundary}, the boundary $\partial D \subset \R^2$ is a bounded closed connected subset and so is a continuum.  
By Lemma \ref{lem:complement}, the complement of $\mathrm{Fill}_{\R^2}(\partial D)$ is an unbounded open annulus $\mathbb{A}$ on $\R^2$ and the complement of the difference $\mathrm{Fill}_{\R^2}(\partial D) - D$ consists of the unbounded open annulus $\mathbb{A}$ and the open disk $D$. 
Moreover, there is an open neighborhood $U$ of $\partial D$ which does not contain $D$ but  is a finite union of open balls of finite radii such that $\overline{U}  \subset D'$ and that each pair of boundaries of such two distinct open balls intersects transversely if it intersects. 
Then each connected component of $ \partial U$ consists of finitely many arcs and so is a simple closed curve. 
This implies that $U$ is a punctured disk whose boundary is a finite union of simple closed curves and that the filling $\mathrm{Fill}_{\R^2}(U)$ of $U$ is a bounded open disk whose boundary is a simple closed curve. 
Since $\partial U$ consists of finitely many simple closed curves contained in the bounded open disk $D'$, the Jordan curve theorem to the open disk $D'$ implies that any simple closed curve which is a connected component of $\partial U \subset D'$ bounds an open disk on $D'$. 
This means that $\mathrm{Fill}_{\R^2}(U) \subset D'$. 
Since $\partial D \subset U$, we have $\overline{D} \subset \mathrm{Fill}_{\R^2}(\partial D) \subset \mathrm{Fill}_{\R^2}(U) \subset D'$. 
\end{proof}

We show that the inclusion relation on the set of fillings of elements of a decomposition is a partial order.

\begin{lemma}\label{inclusion}
For any continua $F \neq F'$ in $\R^2$ with $\mathrm{Fill}_{\R^2}(F) \cap \mathrm{Fill}_{\R^2}(F') \neq \emptyset$, we have either $\mathrm{Fill}_{\R^2}(F) \subset D_{F'} \subset \mathrm{int} (\mathrm{Fill}_{\R^2}(F'))$ or $\mathrm{Fill}_{\R^2}(F') \subset D_{F} \supset \mathrm{Fill}_{\R^2}(F')$, where $D_{F'}$ is some bounded open disk which is a connected component of $\R^2 - F'$ and $D_{F}$ is some bounded open disk which is a connected component of $\R^2 - F$. 
\end{lemma}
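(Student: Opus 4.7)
The plan is to exploit Lemma \ref{lem:complement}, which identifies $\mathrm{Fill}_{\R^2}(X) = \R^2 - \A_X$ for the unique unbounded connected component $\A_X$ of $\R^2 - X$ and writes $\R^2 - X = \A_X \sqcup \bigsqcup_\lambda D^X_\lambda$ with each $D^X_\lambda$ a bounded open disk satisfying $\partial D^X_\lambda \subset X$. Combined with Lemma \ref{inclusion_boundary}, which upgrades \textit{$\partial D \subset D'$} to \textit{$D \subset D'$} for bounded open disks in a plane, this furnishes all the inclusions we need; the argument then reduces to a case analysis on where the connected continuum $F'$ sits in the complementary partition of $\R^2 - F$ and symmetrically for $F$ in $\R^2 - F'$.

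The main case is when $F' \subset D_F$ for some bounded component $D_F$ of $\R^2 - F$. Every bounded component $D^{F'}_\mu$ of $\R^2 - F'$ then satisfies $\partial D^{F'}_\mu \subset F' \subset D_F$, so Lemma \ref{inclusion_boundary} gives $D^{F'}_\mu \subset D_F$; combined with $F' \subset D_F$ one obtains
$\mathrm{Fill}_{\R^2}(F') = F' \cup \bigcup_\mu D^{F'}_\mu \subset D_F.$
Since $D_F$ is an open connected component of $\R^2 - F$ sitting inside $\mathrm{Fill}_{\R^2}(F)$, we have $D_F \subset \mathrm{int}(\mathrm{Fill}_{\R^2}(F))$, producing the second alternative of the conclusion. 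The first alternative arises by the symmetric argument when $F \subset D_{F'}$ for some bounded component of $\R^2 - F'$.

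It remains to exclude the residual configuration in which neither inclusion holds. Since the stated conclusion places $\mathrm{Fill}_{\R^2}(F)$ inside a component of $\R^2 - F'$ and symmetrically, one focuses on the situation $F \cap F' = \emptyset$, and connectivity of $F$ and $F'$ then forces $F' \subset \A_F$ and $F \subset \A_{F'}$. Taking a witness $p \in \mathrm{Fill}_{\R^2}(F) \cap \mathrm{Fill}_{\R^2}(F')$, the point $p$ lies in some bounded disk $D^F_\lambda$ and some bounded disk $D^{F'}_\mu$; since $F'$ avoids $D^F_\lambda$, the connected open set $D^F_\lambda$ lies in a single component of $\R^2 - F'$, which must be $D^{F'}_\mu$ through $p$, and symmetrically $D^{F'}_\mu \subset D^F_\lambda$. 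Thus $D^F_\lambda = D^{F'}_\mu$, and their common boundary—nonempty by Lemma \ref{connectivity_boundary}—would lie simultaneously in $F$ and $F'$, contradicting disjointness. The main obstacle is precisely this exclusion step, since the two continua could \emph{a priori} be interlaced without either being contained in a bounded complementary disk of the other; the needed rigidity comes from the connectedness of $\partial D$ for bounded open disks (Lemma \ref{connectivity_boundary}), which combined with the shared witness $p$ collapses the two disks to the same one and so to an empty boundary.
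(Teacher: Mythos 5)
Your proof is correct, but it is organized quite differently from the paper's. The paper fixes a witness point $p\in\mathrm{Fill}_{\R^2}(F)\cap\mathrm{Fill}_{\R^2}(F')$ and splits on whether $p$ lies in $F$, in $F'$, or in neither; the last case is the delicate one, and the paper handles it by introducing signed distance functions to $\partial D_{F}$ and $\partial D_{F'}$, using connectedness of these boundaries (Lemma \ref{connectivity_boundary}) to force one boundary into the other disk, then Lemma \ref{inclusion_boundary} to nest the disks, and finally connectivity of $F$ to pull $F$ into $D_{F'}$. You instead split on which complementary component of $F$ contains the connected set $F'$ (and symmetrically): if $F'$ sits in a bounded disk $D_F$, then applying Lemma \ref{inclusion_boundary} to each bounded complementary disk of $F'$ gives $\mathrm{Fill}_{\R^2}(F')\subset D_F$ at once, and the only remaining configuration, $F'\subset\A_F$ together with $F\subset\A_{F'}$, is excluded by showing the two bounded disks carrying the witness $p$ coincide and hence would have boundary inside $F\cap F'=\emptyset$. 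This bypasses the signed-distance machinery entirely and is, if anything, shorter; both arguments ultimately rest on the same three supporting lemmas, so nothing is lost. Two small points to tidy up: the nonemptiness of $\partial D^F_\lambda$ comes simply from $D^F_\lambda$ being a nonempty bounded open proper subset of the connected space $\R^2$, not from Lemma \ref{connectivity_boundary} (which gives connectedness); and in the residual case you should record the one-line observation that $F\subset\A_{F'}=\R^2-\mathrm{Fill}_{\R^2}(F')$ forces the witness $p$ to avoid $F\cup F'$, so that it indeed lies in bounded disks of both complements. Finally, note that your reduction to $F\cap F'=\emptyset$ is not really a consequence of $F\neq F'$; the lemma is false for overlapping continua. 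But the paper's own proof silently makes the same disjointness assumption, and it is harmless in the application, where $F$ and $F'$ are distinct components of $G^{-1}(0)$.
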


\begin{proof}
By Lemma \ref{lem:complement}, the complements of $\mathrm{Fill}_{\R^2}(F)$ and $\mathrm{Fill}_{\R^2}(F')$ (resp. $F$ and $F'$) are unbounded open annuli (resp. unbounded open annuli and bounded open disks) on $\R^2$. 
Since $\mathrm{Fill}_{\R^2}(F) \cap \mathrm{Fill}_{\R^2}(F') \neq \emptyset$, 
fix a point $p \in \mathrm{Fill}_{\R^2}(F) \cap \mathrm{Fill}_{\R^2}(F')$. 
Suppose that $p \in F$. 
Then there is a bounded open disk $D_{F'}$ which is the connected component of $\R^2 - F'$ containing $p$. 
Hence $p \in F \cap D_{F'}$. 
Since $F \cap F' = \emptyset$ and $\partial D_{F'} \subset \partial F' \subset F'$, we have $F \cap \partial D_{F'} = \emptyset$. 
Since the disjoint union $D_{F'} \sqcup (\R^2 - \overline{D_{F'}}) = \R^2 - \partial D_{F'}$ is an open neighborhood of $F$, the connectivity of $F$ implies $F \subset D_{F'}$. 
By Lemma \ref{inclusion_boundary}, we obtain $\mathrm{Fill}_{\R^2}(F) \subset D_{F'} \subset \mathrm{int} (\mathrm{Fill}_{\R^2}(F'))$.
By symmetry, we may assume that $p \notin F \sqcup F'$. 
Then there are bounded open disks $D_F$ and $D_{F'}$ such that $D_F$ (resp. $D_{F'}$) is the connected component of $\R^2 - F$ (resp. $\R^2 - F'$) containing $p$. 
Then $\partial D_F \subset \partial F \subset F$ and $\partial D_{F'} \subset \partial F' \subset F'$.  
Since $F \cap F' = \emptyset$, we have $\partial D_F \cap \partial D_F' = \emptyset$. 
Define a continuous function $f: \R^2 \to \R$ as follows: $f(x) = - \min_{y \in \partial D_{F'}} d(x, y)$ if $x \in D_{F'}$ and $f(x) = \min_{y \in \partial D_{F'}} d(x, y)$ if $x \notin D_{F'}$, where $d$ is the Euclidean distance on $\R^2$.  
We show that $\overline{D_F} \subset D_{F'}$ or $D_F \supset \overline{D_{F'}}$. 
Indeed, since $\partial D_F \cap \partial D_F' = \emptyset$, we have $0 \notin f(\partial D_F)$.  
By Lemma \ref{connectivity_boundary}, the boundary $\partial D_F$ is connected and so we obtain either $f(\partial D_F) \subset \R_{<0}$ or $f(\partial D_F) \subset \R_{>0}$. 
This means either that $\partial D_F \subset D_{F'}$ or $\partial D_F \subset \R^2 - D_{F'}$. 
Suppose that $\partial D_F \subset D_{F'}$. 
By Lemma \ref{inclusion_boundary}, we have $\overline{D_F} \subset D_{F'}$. 
%
Thus we may assume that $\partial D_F \subset \R^2 - D_{F'}$. 
Since $f(p) < 0$, we have $0 \in f(D_F)$ and so $D_F \cap \partial D_{F'} \neq \emptyset$. 
Similarly, define a continuous function $f': \R^2 \to \R$ as follows: $f'(x) = - \min_{y \in \partial D_{F}} d(x, y)$ if $x \in D_{F}$ and $f'(x) = \min_{y \in \partial D_{F}} d(x, y)$ if $x \notin D_{F}$. 
As the same argument, we may assume that $\partial D_{F'} \subset D_{F}$ or $\partial D_{F'} \subset \R^2 - D_{F}$. 
Therefore $\partial D_{F'} \subset D_{F}$. 
Lemma \ref{inclusion_boundary} implies $\overline{D_{F'}} \subset D_{F}$. 
Since either $\overline{D_F} \subset D_{F'}$ or $D_F \supset \overline{D_{F'}}$, by symmetry, we may assume that $\overline{D_F} \subset D_{F'}$.  
Then $F \cap D_{F'} \neq \emptyset$. 
Since $F \cap F' = \emptyset$, we have $F \cap \partial D_{F'} = \emptyset$ and so $0 \notin f(F)$. 
The connectivity of $F$ implies that $f(F) \subset \R_{<0}$. 
This means that $F \subset D_{F'}$ and so $\mathrm{Fill}_{\R^2}(F)  \subset D_{F'} \subset \mathrm{int}(\mathrm{Fill}_{\R^2}(F'))$.
\end{proof}

We recall the following tool.

\begin{lemma}[Moore's theorem (cf. p.3 in \cite{D})]\label{Moore's}
Let $S$ be a plane or a sphere.  
The quotient space $S/\F$ of an upper semicontinuous decomposition $\F$ of $S$ into non-separating continua is homeomorphic to $S$ unless $\F$ is the singleton of the sphere. 
\end{lemma}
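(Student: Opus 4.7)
The plan is to prove Moore's theorem by first reducing to the sphere case and then showing that the quotient space is a compact, simply connected $2$-manifold, whence it must be $\mathbb{S}^2$ by the classification of closed surfaces. The reduction proceeds as follows: if $S = \R^2$, pass to the one-point compactification $\mathbb{S}^2 = \R^2 \cup \{\infty\}$ and extend the decomposition by adjoining the singleton $\{\infty\}$; the resulting decomposition is still upper semicontinuous into non-separating continua (the complement of any bounded continuum remains connected in $\mathbb{S}^2$ by Lemma \ref{lem:complement}), and a homeomorphism $\mathbb{S}^2/\F \cong \mathbb{S}^2$ respecting $[\{\infty\}]$ yields $\R^2/\F \cong \R^2$ by deletion of that point.

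Once on the sphere, I would first extract the soft topological structure of the quotient. By Epstein's lemma (Lemma \ref{Epstein}), upper semicontinuity of $\F$ is equivalent to the quotient map $\pi : \mathbb{S}^2 \to \mathbb{S}^2/\F$ being closed and $\mathbb{S}^2/\F$ being Hausdorff. Compactness of $\mathbb{S}^2$ transfers to the quotient, and closedness of $\pi$ together with second countability of $\mathbb{S}^2$ gives second countability of $\mathbb{S}^2/\F$. By Urysohn's metrization theorem, $\mathbb{S}^2/\F$ is a compact, metrizable space. Moreover, a standard argument using cellular neighborhood bases (built from the planar-topology lemmas) shows that its covering dimension is at most $2$.

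The central step is to show that every point $[C] \in \mathbb{S}^2/\F$ has an open neighborhood homeomorphic to an open disk. Given $C \in \F$ and an open neighborhood $U$ of $[C]$, upper semicontinuity provides a saturated open neighborhood of $C$ inside $\pi^{-1}(U)$. Because $C$ is a non-separating continuum in $\mathbb{S}^2$, the filling $\mathrm{Fill}_{\mathbb{S}^2}(C) = C$ and the complement $\mathbb{S}^2 - C$ is a single open disk. Using this together with Lemma \ref{inclusion} and the Schoenflies theorem, one constructs a nested sequence of Jordan-disk neighborhoods $D_n \supset C$ whose saturations $\F(D_n)$ are still Jordan disks (again by non-separation of each element and upper semicontinuity) with $\bigcap_n D_n = C$, so that $\{\pi(D_n)\}$ forms a neighborhood basis of $[C]$. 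A Bing-type shrinking argument, producing homeomorphisms of $\mathbb{S}^2$ that contract $C$ to a point while remaining the identity outside a small neighborhood, then realizes each $\pi(D_n)$ as an honest open disk. Combined with the compactness of the quotient, this makes $\mathbb{S}^2/\F$ a closed topological $2$-manifold.

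Finally, to identify the manifold, I would verify that $\mathbb{S}^2/\F$ is simply connected: any loop in the quotient lifts through the closed surjection $\pi$ to a compact set avoiding all but finitely many suitably thick elements of $\F$, the lift is null-homotopic in $\mathbb{S}^2$, and the null-homotopy descends under $\pi$. The classification of closed surfaces then forces $\mathbb{S}^2/\F \cong \mathbb{S}^2$, unless the quotient is a single point (i.e.\ $\F = \{\mathbb{S}^2\}$), which is the excluded case. The main obstacle I anticipate is the local disk construction around each $[C]$: although the planar-topology machinery developed earlier (Lemmas \ref{lem:complement}--\ref{inclusion}) supplies fillings and nested disks, combining these ingredients with upper semicontinuity into an explicit shrinking homeomorphism is the most delicate part of the argument and is the step for which Moore's original proof, and its modern versions in Daverman's monograph, require the heaviest machinery.
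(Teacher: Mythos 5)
The paper does not prove this statement at all: Moore's theorem is invoked as a black box, cited from p.~3 of Daverman's monograph \cite{D}, so there is no in-paper proof to compare yours against. Your outline follows a recognizable strategy (reduce to the sphere, show the quotient is a closed $2$-manifold, show it is simply connected, invoke the classification of surfaces), and the soft parts are fine: the one-point-compactification reduction, compactness and Hausdorffness of the quotient via Lemma~\ref{Epstein}, and metrizability.

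There is, however, a genuine gap at the two places where all the real content of Moore's theorem lives. First, the claim that every point $[C]$ has a neighborhood homeomorphic to an open disk is exactly the hard part, and you dispose of it by saying that ``a Bing-type shrinking argument \dots realizes each $\pi(D_n)$ as an honest open disk.'' That names the difficulty rather than resolving it: one must construct homeomorphisms of $\mathbb{S}^2$ shrinking a given cellular continuum while simultaneously controlling the possibly uncountably many, densely packed other elements of $\F$, which is precisely the content of the shrinking criterion --- and if you had it, the quotient map would be a near-homeomorphism and you would conclude $\mathbb{S}^2/\F\cong\mathbb{S}^2$ directly, with no need for the classification of surfaces. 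The intermediate claim that the saturation $\F(D_n)$ of a Jordan disk is again a Jordan disk is also false in general. Second, your simple-connectivity step assumes a loop in the quotient ``lifts through the closed surjection $\pi$''; quotient maps admit no such lifting in general, and for monotone maps one needs a separate argument (e.g.\ the Vietoris--Begle theorem in \v{C}ech cohomology, using that non-separating plane continua are acyclic by Alexander duality). As written the proposal is an honest road map to the theorem, not a proof of it; for the purposes of this paper the correct move is the one the authors make, namely to cite the result.
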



Fix a square $D := [0,1] \times [-M, M]$ and 
a continuous function $G: D \to \R$ such that 
$G^{-1}(0) \cap \partial D \subseteq \{ 1 \} \times (-M, M)$ is finite. 
Let $R: [0,1] \times [-M, M] \to [1,2] \times [-M, M]$ be the reflection with respect to $\{1\} \times [-M, M]$ (i.e. $R( x, y ) = ( 2 - x, y)$). 
For a subset $B \subset \R^2$, define the union $\hat{R}(B) := B \cup R(B)$. 
Consider the double $[0,2] \times [-M, M] = \hat{R}([0, 1] \times [-M, M])$ with respect to $\{1\} \times [-M, M]$. 
Extend $G$ to $\hat{R}(D) = R(D) \cup D$ by  $G|_{R(D)} = G \circ R$ (see Figure \ref{figure_4}). 
Note that the statement of Proposition \ref{prop:connecting-curve} for the original continuous map $G$ is equivalent to one of the extended continuous map $G$. 
Therefore we deal with the extended continuous map $G: [0,2] \times [-M, M] \to \R$ to show the statement. 
Denote by $\mathcal{G}_0$ the set of connected components of $G^{-1}(0) \subset (0,2) \times (-M, M)$.

\begin{figure}
\begin{center}
\includegraphics[scale=0.25]{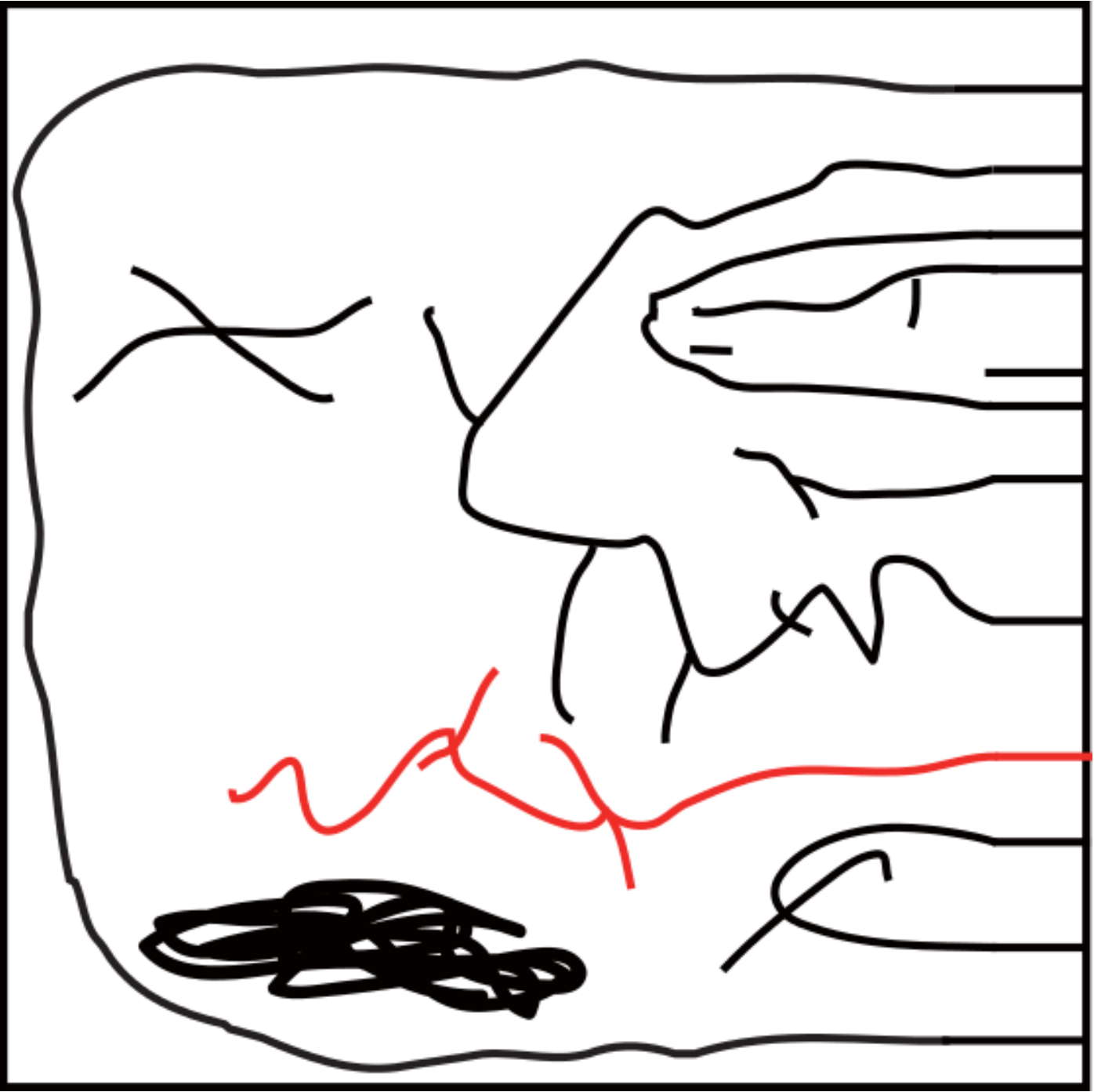}
\includegraphics[scale=0.25]{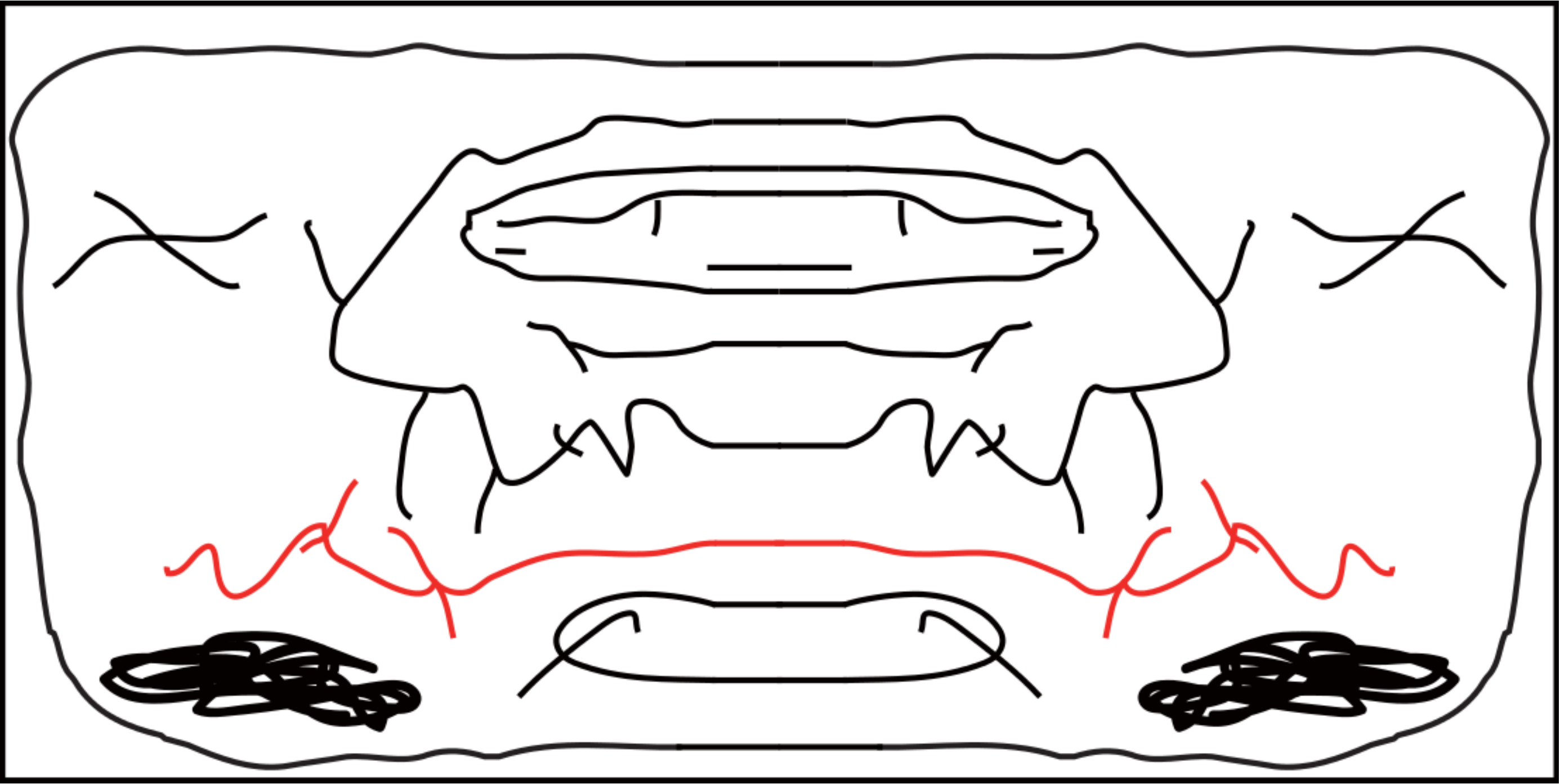}
\end{center} 
\caption{The original domain $[0,1] \times [-M, M]$ and its double $[0,2] \times [-M, M]$ with respect to $\{ 1 \} \times [-M, M]$}
\label{figure_4}
\end{figure}

\begin{lemma}\label{Open-disk}
Let $\mathcal{C}_0 := \{ C \in \mathcal{G}_0 \mid \mathrm{Fill}_{\R^2}(C_0) \subseteq \mathrm{Fill}_{\R^2}(C) \}$ for a continuum $C_0 \in \mathcal{G}_0$. 
Then the family $\mathcal{F}_{C_0} := \{ \mathrm{Fill}_{\R^2}(C) \mid C \in \mathcal{C}_0 \}$  is a totally ordered set with respect to the inclusion relation and has a maximal element. 
\end{lemma}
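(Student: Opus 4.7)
The total ordering follows at once from Lemma \ref{inclusion}: any two distinct members of $\mathcal{F}_{C_0}$ both contain $\mathrm{Fill}_{\R^2}(C_0) \neq \emptyset$ and so intersect, while their underlying continua (distinct components of $G^{-1}(0)$) are disjoint, so the lemma forces one filling to contain the other.

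For the existence of a maximum, suppose $\mathcal{F}_{C_0}$ is infinite (the finite case being trivial). Using total ordering together with the boundedness of all fillings inside $D$, I will pick a strictly increasing cofinal chain $F_n := \mathrm{Fill}_{\R^2}(C_n)$ with $\mathrm{diam}(F_n) \to \sup \{\mathrm{diam}(F) : F \in \mathcal{F}_{C_0}\}$, and define $F^* := \overline{\bigcup_n F_n} \subseteq D$. Then $F^*$ is a continuum, being the closure of an increasing union of continua that all contain the nonempty set $\mathrm{Fill}_{\R^2}(C_0)$. Letting $\mathbb{A}^*$ be the unbounded connected component of $\R^2 - F^*$ and setting $B^* := \partial \mathbb{A}^*$, Lemma \ref{lem:complement} applied to the continuum $F^*$ makes $\mathbb{A}^* \cup \{\infty\} \subseteq \mathbb{S}^2$ an open disk, whence Lemma \ref{connectivity_boundary} exhibits $B^*$ as a bounded connected subcontinuum of $F^*$.

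The crucial step is to verify $B^* \subseteq G^{-1}(0)$. Given $p \in B^* \subseteq \partial F^*$: if $p \in F_n$ for some $n$, then $p \notin \mathrm{int}(F_n)$ (else $p \in \mathrm{int}(F^*)$, contradicting $p \in \partial F^*$), so $p \in \partial F_n \subseteq C_n \subseteq G^{-1}(0)$. Otherwise one picks $p_k \in F_{n_k}$ with $n_k \to \infty$ and $p_k \to p$ alongside $q_j \notin F^*$ with $q_j \to p$; for each large $k$ and a suitable $q_{j_k}$ within distance $1/k$ of $p$, the straight segment from $p_k \in F_{n_k}$ to $q_{j_k} \notin F_{n_k}$ must cross $\partial F_{n_k} \subseteq C_{n_k}$ at a point $r_k$ within distance $2/k$ of $p$. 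Since $G(r_k) = 0$ and $r_k \to p$, continuity of $G$ yields $G(p) = 0$. Thus $B^*$ is a connected closed subset of $G^{-1}(0)$ and lies in a single component $C^* \in \mathcal{G}_0$.

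Finally, Lemma \ref{lem:complement} applied to $B^*$ identifies $\R^2 - \mathrm{Fill}_{\R^2}(B^*)$ as the unbounded component of $\R^2 - B^*$, which is exactly $\mathbb{A}^*$; hence $\mathrm{Fill}_{\R^2}(B^*) = \R^2 - \mathbb{A}^* \supseteq F^* \supseteq F_n$. Monotonicity of the filling operator on continua (a direct consequence of Lemma \ref{lem:complement}) combined with $B^* \subseteq C^*$ gives $\mathrm{Fill}_{\R^2}(C^*) \supseteq \mathrm{Fill}_{\R^2}(B^*) \supseteq F_n$ for every $n$. Since $\mathrm{Fill}_{\R^2}(C_0) \subseteq F_1 \subseteq \mathrm{Fill}_{\R^2}(C^*)$, we have $C^* \in \mathcal{C}_0$, so $\mathrm{Fill}_{\R^2}(C^*) \in \mathcal{F}_{C_0}$ is an upper bound of the cofinal chain $\{F_n\}$; by total ordering it is the maximum. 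The main technical obstacle will be verifying $\partial F^* \subseteq G^{-1}(0)$ via the path-crossing argument, together with the careful identification $\mathrm{Fill}_{\R^2}(B^*) = \R^2 - \mathbb{A}^*$ through the complementary-component structure of Lemma \ref{lem:complement}.
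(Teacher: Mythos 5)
Your argument is correct in substance but takes a genuinely different route from the paper. The paper argues by contradiction: assuming $\mathcal{F}_{C_0}$ has no maximal element, it realizes $D_0 := \bigcup \mathcal{F}_{C_0}$ as a nested union of the open disks $D_F$ supplied by Lemma~\ref{inclusion}, concludes via a null-homotopy argument that $D_0$ is a bounded open disk whose boundary is connected by Lemma~\ref{connectivity_boundary}, and then sandwiches $\partial D_0 \subseteq \overline{\bigcup \partial \mathrm{Fill}_{\R^2}(C)} \subseteq \overline{\bigcup \partial C} \subseteq G^{-1}(0)$; this puts $\partial D_0$ inside a single component $C_\infty$ whose filling contains $D_0$, yielding a maximal element and a contradiction. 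You instead construct the maximum directly: pass to a countable cofinal chain $F_n$, form the continuum $F^* := \overline{\bigcup_n F_n}$, extract its outer boundary $B^*$ through Lemma~\ref{lem:complement} and Lemma~\ref{connectivity_boundary}, and verify $B^* \subseteq G^{-1}(0)$ by a segment-crossing argument. Both arguments hinge on the same two topological facts (connectedness of the outer boundary, and its containment in $G^{-1}(0)$), but the paper's choice to work with the full union and its topological boundary makes the $G^{-1}(0)$-inclusion a one-line sandwich and avoids extracting a countable chain at all.

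One step in your plan needs repair. A chain with $\mathrm{diam}(F_n) \to \sup\{\mathrm{diam}(F) : F \in \mathcal{F}_{C_0}\}$ is not automatically cofinal: strict inclusion of fillings need not strictly increase diameter, so there could be $F$ with $F_n \subsetneq F$ for every $n$ and yet $\mathrm{diam}(F)$ equal to the supremum. Cofinality is exactly what you use at the end, where $\mathrm{Fill}_{\R^2}(C^*)$ dominating every $F_n$ is supposed to make it dominate all of $\mathcal{F}_{C_0}$. The fix is cheap: by Lemma~\ref{inclusion}, strict inclusion of fillings forces the smaller one into the interior of the larger, so Lebesgue measure (area) is strictly increasing on the totally ordered set $\mathcal{F}_{C_0}$. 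Hence $\mathcal{F}_{C_0}$ order-embeds into $\R$ and therefore has a countable cofinal subset; take your $F_n$ from that instead of from a diameter-maximizing sequence.
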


\begin{proof}
By Lemma \ref{inclusion}, the set $\mathcal{F}_{C_0}$ with the inclusion is a totally ordered set. 
Put $D_0 := \bigcup \mathcal{F}_{C_0} = \bigcup \{ \mathrm{Fill}_{\R^2}(C) \mid C \in \mathcal{C}_0 \} \subset [0,2] \times [-M, M]$. 
Assume that $\mathcal{F}_{C_0}$ has no maximal element. 
By Lemma \ref{inclusion}, there is a family $\mathcal{D}_0 := \{ D_F \}_{F \in \mathcal{F}_{C_0}}$ of bounded open disks with $D_0 = \bigcup \mathcal{D}_0$ which is total ordered with respect to the inclusion relation. 
We show that each closed curve $\gamma$ on $D_0$ is null homotopic in $D_0$. 
Indeed, since $\gamma$ is compact and the family $\mathcal{D}_0$ consists of open disks and is an open covering of $\gamma$ and the totally ordered set with respect to the inclusion relation, there is an element $D \in \mathcal{D}_0$ such that $\gamma \subset D$ and so the curve $\gamma$ is null homotopic. 
Therefore the union $D_0$ is a bounded open disk on $\R^2$. 
By Lemma \ref{connectivity_boundary}, the boundary $\partial D_0$ is connected. 
Moreover we have $\partial D_0 \subseteq \overline{\bigcup \{ \partial \mathrm{Fill}_{\R^2}(C) \mid C \in \mathcal{C}_0 \}} \subseteq \overline{\bigcup \{ \partial C \mid C \in \mathcal{C}_0 \}} \subseteq G^{-1}(0)$. 
Since $\partial D_0 \subseteq G^{-1}(0)$ is connected, there is an element $C_{\infty} \in \mathcal{G}_0$ such that $\partial D_0 \subseteq C_{\infty}$ and so $D_0 \subset \mathrm{Fill}_{\R^2}(C_{\infty})$. 
Since $\mathrm{Fill}_{\R^2}(C_0) \subseteq D_0$, we obtain $\mathrm{Fill}_{\R^2}(C_0) \subseteq \mathrm{Fill}_{\R^2}(C_{\infty})$ and so $C_{\infty} \in \mathcal{C}_0$. 
This means that $\mathrm{Fill}_{\R^2}(C_{\infty})$ is the maximal element of $\mathcal{F}_{C_0}$, which contradicts the assumption. 
\end{proof}

We will show the following statement which is a statement of Proposition \ref{prop:connecting-curve} for the extended continuous map $G$ on the double $[0,2] \times [-M, M]$, and which is equivalent to Proposition \ref{prop:connecting-curve}.

\begin{lemma}\label{existence-path}
If there is a connected component of $G^{-1}(0)$ which intersects exactly once to $\{ 1 \} \times [-M, M]$ at a point $p_0 = (1, y_0)$, then there is a path in $D - G^{-1}(0)$ connecting points $y_-$ and $y_+$ in $\{ 1 \} \times [-M, M]$ which are arbitrary close to $y_0$ with $y_- < y_0 < y_+$. 
\end{lemma}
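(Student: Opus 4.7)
The plan is to apply Moore's theorem to a decomposition of $S^2$ that collapses each maximal filling of a connected component of $G^{-1}(0)$ to a point; in the resulting quotient sphere, a small saturated open disk around the image of $F_C$ has a boundary that lifts to a Jordan curve in $S^2$ enclosing $F_C$ and disjoint from $G^{-1}(0)$, and one arc of this Jordan curve supplies the required detour on the axis.

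First, apply Lemma \ref{Open-disk} to $F_C$ to obtain the maximal filling $F_\infty$ containing $\mathrm{Fill}_{\mathbb{R}^2}(F_C)$; by Lemma \ref{lem:filling}, $F_\infty$ is a non-separating continuum. Embed $\mathbb{R}^2$ in $S^2$ by one-point compactification, and define a decomposition $\mathcal{F}$ of $S^2$ whose non-trivial elements are the maximal fillings of connected components of $G^{-1}(0)$, with all other points being singletons. By Lemma \ref{inclusion}, distinct maximal fillings are pairwise disjoint, so $\mathcal{F}$ is a valid decomposition into non-separating continua. The first technical step is to establish upper semi-continuity of $\mathcal{F}$: given a maximal filling $F$ and an open neighborhood $U \supseteq F$ in $S^2$, construct a saturated open $W$ with $F \subseteq W \subseteq U$ by covering $F$ with finitely many small open balls whose closures lie in $U$ (following the proof of Lemma \ref{inclusion_boundary}) and taking the filling of their union; Lemma \ref{inclusion} then implies that any other maximal filling meeting this open disk is entirely contained in it, so the disk is saturated.

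Applying Moore's theorem (Lemma \ref{Moore's}) yields a quotient map $\pi : S^2 \to Q := S^2/\mathcal{F}$ with $Q \cong S^2$. Let $\bar{p}_0 := \pi(F_\infty)$. By the construction above, for every $\delta > 0$ there is a saturated open set $W_\delta \supset F_\infty$ of diameter less than $\delta$ whose boundary $J_\delta := \partial W_\delta$ is a Jordan curve in $S^2$. Saturation gives that $J_\delta$ is disjoint from every maximal filling other than $F_\infty$; in particular $J_\delta \cap G^{-1}(0) = \emptyset$.

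Now choose $y_-, y_+$ close to $y_0$ with $y_- < y_0 < y_+$ and $(1, y_\pm) \notin G^{-1}(0)$; this is possible since $G^{-1}(0) \cap A$ is finite, where $A := \{1\} \times [-M, M]$. For $\delta$ sufficiently small, $J_\delta$ lies in a small neighborhood of $F_\infty$, and one verifies, using the $R$-invariance of $F_\infty$ together with the hypothesis $F_C \cap A = \{p_0\}$, that $J_\delta$ meets $A$ at exactly two points, one just above $p_0$ and one just below, both lying in $\{1\} \times (y_-, y_+)$. Joining the short axis segments from $(1, y_\pm)$ to these crossings with one of the two arcs of $J_\delta$ between them produces a path from $(1, y_-)$ to $(1, y_+)$ in the doubled domain avoiding $G^{-1}(0)$; folding this path by the reflection $R$ where necessary preserves avoidance of $G^{-1}(0)$ and returns a path in the original domain $D$, giving the statement equivalent to Proposition \ref{prop:connecting-curve}. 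The main obstacle is the simultaneous control required when constructing $W_\delta$---saturated, of arbitrarily small diameter, with Jordan-curve boundary crossing the axis at exactly two points arbitrarily close to $p_0$---for which the ball-approximation of Lemma \ref{inclusion_boundary} must be balanced against the nesting structure of fillings from Lemma \ref{inclusion}, and the local behaviour of $F_\infty \cap A$ near $p_0$ must be controlled using the hypothesis on $F_C$.
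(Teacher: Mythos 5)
There is a genuine gap. Your key object is the \emph{maximal} filling $F_\infty$ containing $\mathrm{Fill}_{\R^2}(L_C)$ (where $L_C$ denotes the distinguished component of $G^{-1}(0)$), and the detour is built from a Jordan curve bounding a small saturated neighbourhood of $F_\infty$. By Lemma~\ref{Open-disk}, $F_\infty=\mathrm{Fill}_{\R^2}(L'')$ for some component $L''$ of $G^{-1}(0)$, and nothing in the hypotheses forces $L''=L_C$. Whenever $L''\neq L_C$, Lemma~\ref{inclusion} places $\mathrm{Fill}_{\R^2}(L_C)$ inside a bounded complementary disk $U$ of $L''$; since $p_0\in L_C\subset U$ and $\partial U\subset L''$, the open disk $U$ contains a whole open interval $\{1\}\times(c,d)\ni p_0$ of the doubling axis with $(1,c),(1,d)\in L''$, so $F_\infty\supset\{1\}\times(c,d)$ with $d-c>0$. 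Any curve bounding a small neighbourhood of $F_\infty$ therefore crosses the axis only \emph{outside} $[c,d]$, and the endpoints $y_\pm$ it supplies cannot be taken arbitrarily close to $y_0$. In such a configuration the path the lemma asks for must pass \emph{through} the region $U-\mathrm{Fill}_{\R^2}(L_C)$, i.e.\ inside $F_\infty$, and your construction can never reach it. The hypothesis $L_C\cap(\{1\}\times[-M,M])=\{p_0\}$ controls $\mathrm{Fill}_{\R^2}(L_C)$ (whose filled disks do avoid the axis, by the argument of Lemma~\ref{lem:complement} applied to the single axis crossing), but it gives no control at all on $F_\infty\cap(\{1\}\times[-M,M])$.

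This is exactly why the paper's proof does \emph{not} pass to the maximal filling enclosing $L_C$. It collapses $F_C:=\mathrm{Fill}_{\R^2}(L_C)$ itself, and then takes the fillings $\mathrm{Fill}_{\D}(L_i)$ of the remaining axis-meeting components in the complementary open disk $\D=\mathbb{S}^2-F_C$ rather than in $\R^2$; consequently $U-F_C$ is an \emph{annulus} rather than a disk and is not swallowed by any element of the decomposition, leaving room for the path. The path itself is then obtained non-constructively, from zero-dimensionality of the image of $G^{-1}(0)$ under the quotient map and the Cantor-manifold property of the quotient sphere, rather than by exhibiting a bounding Jordan curve. There is also a secondary gap in your write-up: saturation of the open set $W_\delta$ only shows that each element of $\mathcal{F}$ \emph{meeting} $W_\delta$ lies in $W_\delta$; it does not prevent a nontrivial element lying entirely in the closed complement from touching $J_\delta=\partial W_\delta$, so the claim $J_\delta\cap G^{-1}(0)=\emptyset$ would require a separate argument (and the appeal to Lemma~\ref{inclusion} to establish saturation is itself circular, since that lemma presupposes the relevant continua are already disjoint).
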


\begin{proof}
Since $\partial \hat{R}(D) \cap G^{-1}(0) = \emptyset$, 
we may assume that $G$ is constant on $\partial \hat{R}(D) = \partial ([0,2] \times [-M, M])$. 
Collapsing the boundary $\partial \hat{R}(D)$ into a point, denoted by $\infty$, 
the resulting surface is a sphere, denoted by $\mathbb{S}^2$ (see the left figure in Figure \ref{figure_5}). 
Then the induced map $G : \mathbb{S}^2 \to \R$ is a well-defined continuous map. 
Suppose that there is a connected component $L_C$ of $G^{-1}(0)$ which intersects exactly once to $\{ 1 \} \times [-M, M]$ at a point $(1, y_0)$. 
Let $L_1, \ldots , L_k$ be the connected components except $L_C$ of $G^{-1}(0)$ intersecting $\{ 1 \} \times [-M, M]$. 
Write $F_C := \mathrm{Fill}_{\R^2}(L_C)$. 
Then the complement $\D := \mathbb{S}^2 - F_C$ is an open disk. 
Put $F_i :=  \mathrm{Fill}_{\D}(L_i)$ for $i = 1, \ldots , k$. 
Lemma \ref{lem:filling} implies that the fillings $F_C, F_1, \ldots , F_l$ are non-separating  continua. 
By Lemma \ref{inclusion}, we have that either $F_i \cap F_j = \emptyset$, $F_i \subset F_j$, or $F_i \supset F_j$ for any pair $i \neq j$. 
By Lemma \ref{Open-disk}, we may assume that $F_1, \ldots , F_l$ are the maximal elements with respect to the inclusion relation. 

Define a decomposition $\F_0$ on $\mathbb{S}^2$ which consists of connected components of $F_C, F_1, \ldots, F_l$ and singletons of the points of the complement of $F_C \sqcup F_1 \sqcup \dots \sqcup F_l$. In other words,  
$$\F_0 = \{ F_C, F_1, \ldots, F_l \} \sqcup \{ \{ x \} \mid x \in \mathbb{S}^2 - F_C \sqcup F_1 \sqcup \dots \sqcup F_l \}$$ 
Since $F_C, F_1, \ldots, F_l$ are closed and $\mathbb{S}^2$ is normal, the quotient space $\mathbb{S}^2/\mathcal{F}_0$ is Hausdorff. 
Lemma \ref{Epstein} implies that the decomposition $\F_0$ is upper semicontinuous. 
Since each element of $\F_0$ is non-separating, 
applying the Moore's theorem to a decomposition $\F_0$ of $\mathbb{S}^2$, 
the quotient space $\mathbb{S}^2/\F_0$ is a sphere (see the middle figure in Figure \ref{figure_5}). 
This means that there are finitely many connected components of $G^{-1}(0)/\F_0$ intersecting $(\{ 1 \} \times [-M, M])/\F_0$, which are singletons. 
Recall that $\mathcal{G}_0$ is the set of connected components of $G^{-1}(0)$ 
and that $\max P$ is the subset of maximal elements of a partial order set $P$. 
Putting $\mathcal{M} := \max \{ \mathrm{Fill}_{\R^2}(L) \mid L \in \mathcal{G}_0 - \{ F_C, F_1, \ldots, F_l \} \} \sqcup \{ F_C, F_1, \ldots, F_l \}$ with respect to the inclusion relation, 
Lemma \ref{Open-disk} implies that the family $\mathcal{M}$ and the points of the complement $\mathbb{S}^2  - \bigcup \mathcal{M}$ form a decomposition $\F_1$. 
In other words, the decomposition $\F_1$ is defined by 
$$\F_1 :=  \mathcal{M} \sqcup \{ \{ x \} \mid x \in \mathbb{S}^2  - \bigcup \mathcal{M} \}$$
Note that $\mathcal{M} = \max \{ \mathrm{Fill}_{\R^2}(L) \mid L \in \mathcal{G}_0 - \{ F_C, F_1, \ldots, F_l \} \} \sqcup \{ F_C, F_1, \ldots, F_l \} = \max \{ \mathrm{Fill}_{\R^2}(L) \mid L \in \mathcal{G}_0, L \cap \{ 1 \} \times [-M,M] = \emptyset \} \sqcup \{ F_C, F_1, \ldots, F_l \}$. 
Lemma \ref{lem:filling} implies that each element of $\mathcal{M}$ is a non-separating  continuum. 
Since each element of $\F_1$ is closed and $\mathbb{S}^2$ is normal, the quotient space $\mathbb{S}^2/\mathcal{F}_1$ is Hausdorff. 
Lemma \ref{Epstein} implies that the decomposition $\F_1$ is upper semicontinuous. 
Since each element of $\F_1$ is non-separating, 
applying the Moore's theorem to a decomposition $\F_1$, 
the quotient space $\mathbb{S}^2/\F_1$ is a sphere (see the right figure in Figure \ref{figure_5}). 
Since a locally compact Hausdorff space is zero-dimensional if and only if it is totally disconnected, a compact totally disconnected subset $G^{-1}(0)/\F_1$ of a sphere is zero-dimensional. 
Since a sphere is a Cantor manifold, the complement $(\mathbb{S}^2 - G^{-1}(0))/\F_1$ is a connected surface. 
Since a manifold is connected if and only if it is path-connected, the complement $(\mathbb{S}^2 - G^{-1}(0))/\F_1$ is path-connected. 
Since $\F_1(x) = \{ x \}$ for any point $x \in \mathbb{S}^2 - \F_1(G^{-1}(0))$, we have that $\mathbb{S}^2 - (\F_1(G^{-1}(0)) \sqcup \{ \infty \})$ is path-connected. 
Since $\mathbb{S}^2 - (\F_1(G^{-1}(0)) \sqcup \{ \infty \}) = \mathrm{int} \hat{R}(D) - G^{-1}(0)$ is symmetric with respect to $\{ 1 \} \times [-M, M]$, there is a path in $D - G^{-1}(0)$ connecting points $y_-$ and $y_+$ in $\{ 1 \} \times [-M, M]$ which are arbitrary close to $y_0$ with $y_- < y_0 < y_+$. 
\end{proof}

\begin{figure}
\begin{center}
\includegraphics[scale=0.2]{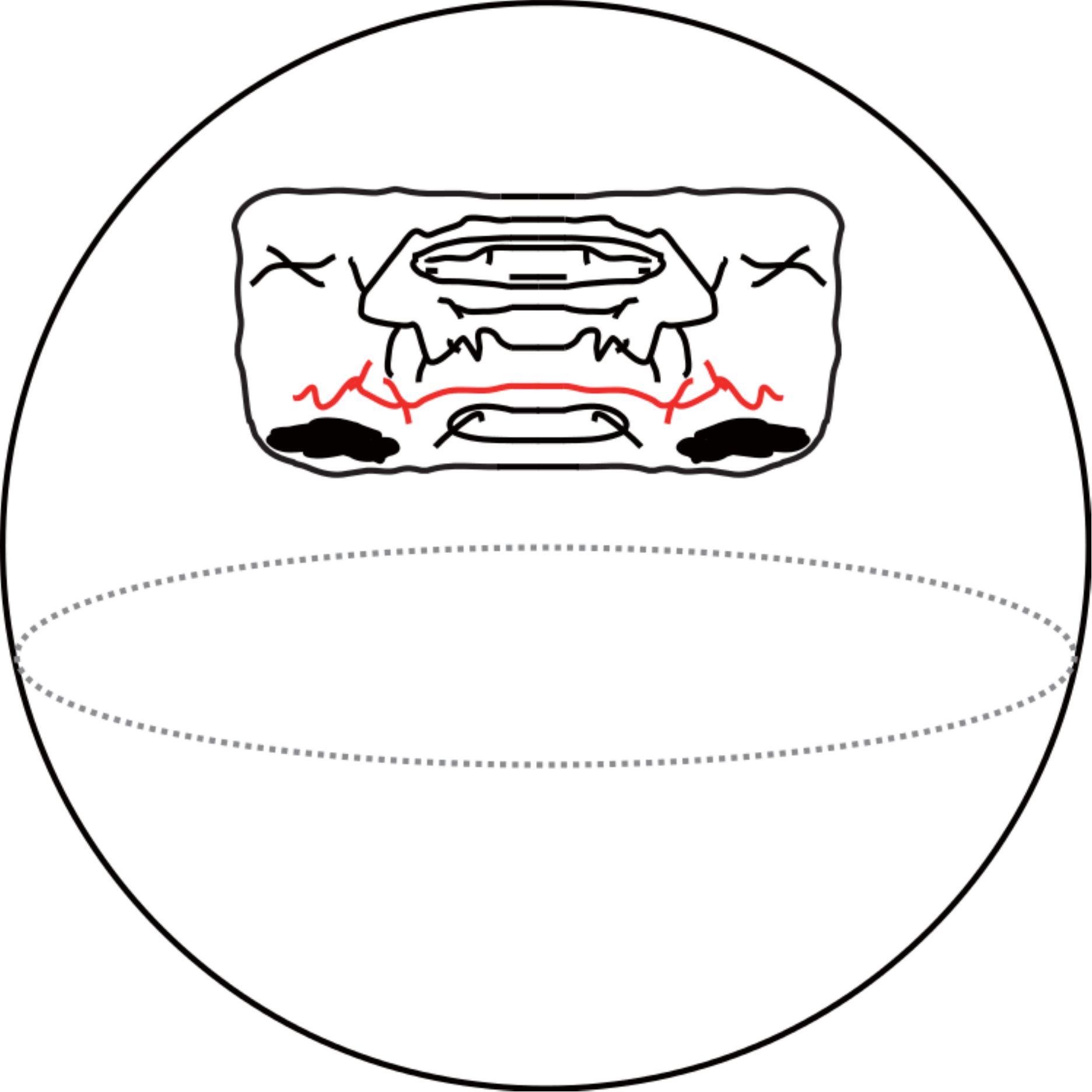}
\includegraphics[scale=0.2]{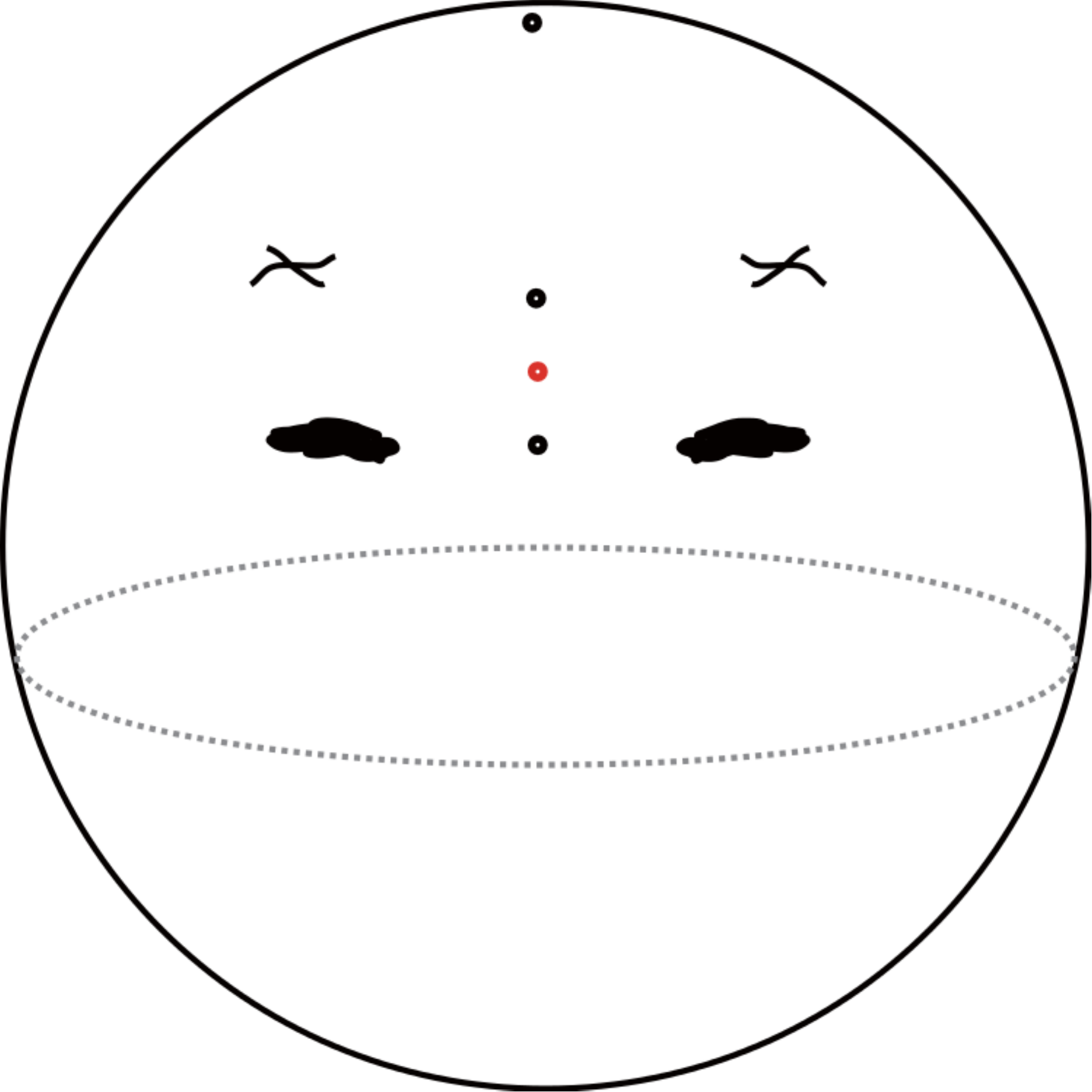}
\includegraphics[scale=0.2]{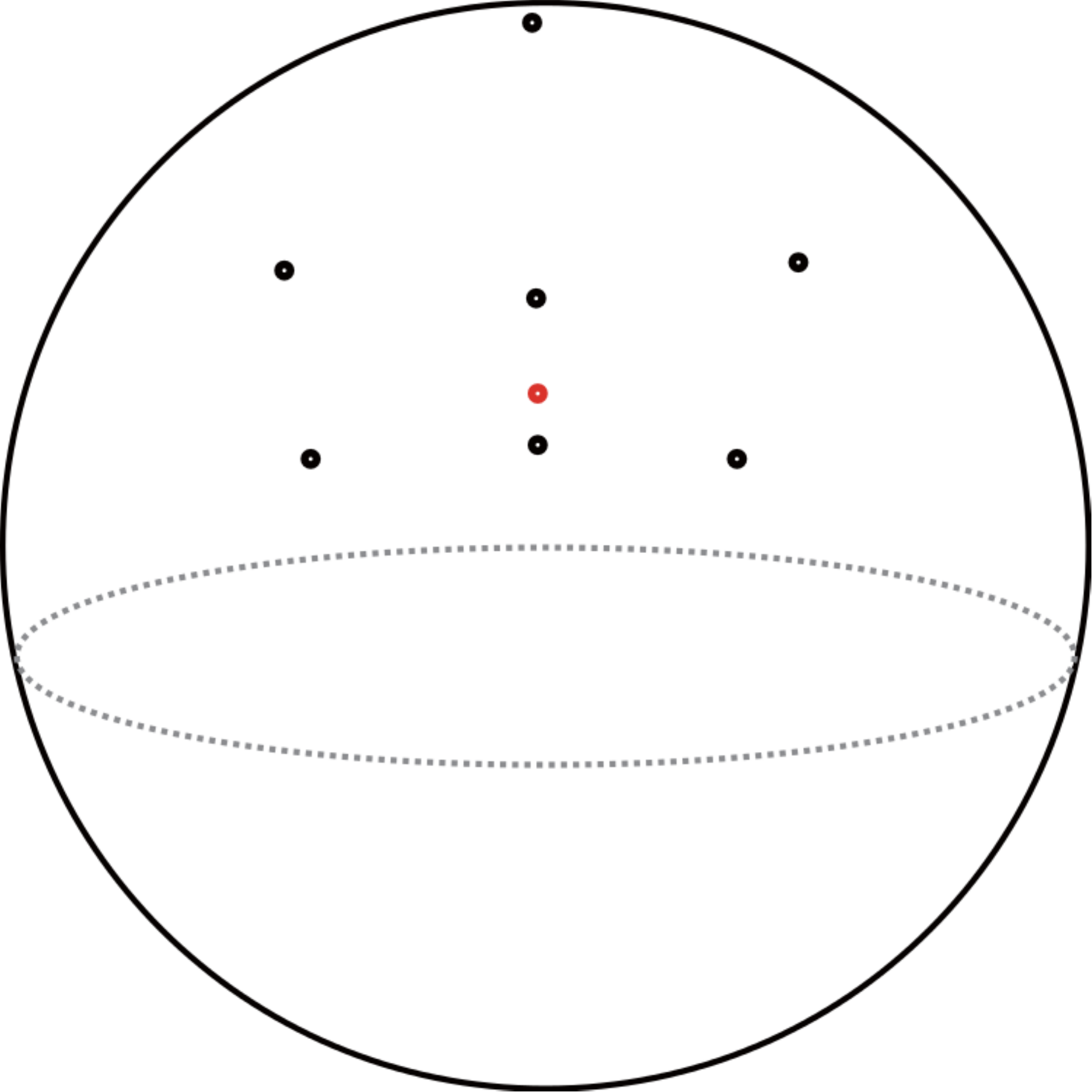}
\end{center} 
\caption{Collapsing processes on the sphere $\mathbb{S}^2 = ([0,2] \times [-M, M])/\partial ([0,2] \times [-M, M])$}
\label{figure_5}
\end{figure}

\section{Appendix}

Roughly speaking, Proposition~\ref{prop:connecting-curve} asserts the existence of separating chord. 
To state this, we define a separating chord as follows: 
Let $D^2$ be the unit disk in $\R^2$. 
An injective continuous curve $I : [0,1] \to D^2$ is a chord 
if $\partial D^2 \cap \mathop{\mathrm{Im}} I =  \{ I(0), I(1) \}$. 
For a compact subset $K \subset D^2$ such that $K \cap \partial D^2$ is finite 
and for a connected component $C$ of $K$ intersecting the boundary $\partial D^2$, 
a chord $I$ is a separating chord from $C$ if $K \cap \mathop{\mathrm{Im}} I = \emptyset$ and 
the complement $D^2 - \mathop{\mathrm{Im}} I$ consists of two connected components 
such that the connected component of $D^2 - \mathop{\mathrm{Im}} I$ containing $C$ contains 
no other connected components of $K$  intersecting $\partial D^2$ (i.e. 
$C \cap \partial D^2 = B \cap K \cap \partial D^2$, 
where $B$ is the connected component of the complement $D^2 - \mathop{\mathrm{Im}} I$ intersecting $C$). 
Finally, we state an existence of separating chord as follows. 

\begin{theorem}
Let $K$ be a compact subset of the unit disk $D^2$ such that $K \cap \partial D^2$ is finite and $C$ a connected component of $K$ intersecting the boundary $\partial D^2$ exactly once. 
Then there is a separating chord from $C$. 
\end{theorem}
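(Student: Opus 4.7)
The plan is to deduce this theorem directly from Proposition~\ref{prop:connecting-curve} by transporting the problem from $D^2$ to the rectangle $D = [0,1] \times [-M,M]$ by a homeomorphism. Since $K \cap \partial D^2$ is finite, I can choose a homeomorphism of circles $\partial D^2 \to \partial D$ sending $K \cap \partial D^2$ into the open edge $\{1\} \times (-M,M)$, and extend it to a homeomorphism $\phi : D^2 \to D$ (by coning, or by Schoenflies). Writing $K' := \phi(K)$, $C' := \phi(C)$ and $p_0 := \phi(C \cap \partial D^2)$, the set $K' \cap \partial D$ is a finite subset of $\{1\} \times (-M,M)$ containing $p_0$, and $C'$ meets $\{1\} \times (-M,M)$ exactly once (at $p_0$).

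Next I would realize $K'$ as the zero set of a continuous function. Set $G(q) := d(q, K')$, the Euclidean distance to $K'$. Then $G : D \to \R$ is continuous, $G^{-1}(0) = K'$, and $G^{-1}(0) \cap \partial D = K' \cap \partial D$ is finite and contained in $\{1\} \times (-M,M)$. The connected component of $G^{-1}(0)$ containing $p_0$ is precisely $C'$, which meets $\{1\} \times (-M,M)$ only at $p_0$. All hypotheses of Proposition~\ref{prop:connecting-curve} are therefore satisfied with $F_C = C'$.

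Now choose $\delta > 0$ small enough that the segment $(p_0 - \delta, p_0 + \delta)$ on $\{1\} \times (-M,M)$ contains no point of $K' \cap \partial D$ other than $p_0$; this is possible because the latter set is finite. Proposition~\ref{prop:connecting-curve} supplies a path in $D - G^{-1}(0) = D - K'$ joining points $y_- < p_0 < y_+$ that lie in $(p_0 - \delta, p_0 + \delta)$. Since $D - K'$ is an open subset of the plane, hence locally arcwise connected, I can replace this path by an embedded arc $\widetilde{I} : [0,1] \to D - K'$ from $y_-$ to $y_+$ whose interior lies in $\mathrm{int}\, D$ (thin the path to an arc by a standard argument, and push the interior slightly off the boundary edge if necessary). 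Pull back to $D^2$ by setting $I := \phi^{-1} \circ \widetilde{I}$; then $I$ is an injective continuous curve in $D^2$ with $I(\{0,1\}) = \{\phi^{-1}(y_-), \phi^{-1}(y_+)\} \subset \partial D^2$ and $\mathop{\mathrm{Im}} I \cap \partial D^2 = \{I(0), I(1)\}$, i.e.\ a chord, and $\mathop{\mathrm{Im}} I \cap K = \emptyset$.

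Finally I verify the separation property. The arc $\mathop{\mathrm{Im}} I$, together with the short boundary arc $\alpha \subset \partial D^2$ from $I(0)$ to $I(1)$ passing through $\phi^{-1}(p_0)$, forms a Jordan curve in $D^2$; by the Jordan curve theorem and Schoenflies, $D^2 - \mathop{\mathrm{Im}} I$ has exactly two connected components, and the one whose boundary includes $\alpha$ contains $C$. By the choice of $\delta$, the arc $\alpha$ meets $K \cap \partial D^2$ only at $\phi^{-1}(p_0)$, so no component of $K$ other than $C$ meets $\partial D^2$ inside the component of $D^2 - \mathop{\mathrm{Im}} I$ containing $C$. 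This is exactly the separating chord condition. The only mildly delicate step is the thinning of the path produced by Proposition~\ref{prop:connecting-curve} to an embedded arc disjoint from $\partial D^2$ except at its endpoints, but this is standard for open planar regions; beyond that, the argument is a bookkeeping translation of the already-established Proposition~\ref{prop:connecting-curve}.
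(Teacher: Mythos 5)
Your proof takes essentially the same route as the paper: transport $K$ to the rectangle $D$ by a homeomorphism placing the finitely many boundary points of $K$ on the open edge $\{1\}\times(-M,M)$, realize the image of $K$ as the zero set of a distance function, apply Proposition~\ref{prop:connecting-curve}, and pull the resulting path back to $D^2$. You are in fact somewhat more explicit than the paper about thinning the path to an embedded arc whose interior avoids $\partial D^2$ and about verifying the separating property via the Jordan curve theorem, steps the paper leaves implicit.
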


\begin{proof}
Define a continuous function $g: D^2 \to \R_{\geq 0}$ by $g(x) := \min \{ d(x, y) \mid y \in K \}$, 
where $d$ is the Euclidean metric. 
Then $K = g^{-1}(0)$. 
Let $h: D = [0,1] \times [-M, M] \to D^2$ be a homeomorphism 
such that $h^{-1}(K) \cap \partial D$ is a finite set on $\{1\}\times (-M,M)$. 
The composition $G := g \circ h: D \to \R$ is a continuous function 
such that $G^{-1}(0) \cap \partial D$ is a finite set on $\{1\}\times (-M,M)$.   
The inverse image $F_C := h^{-1}(C)$ is a connected component of $G^{-1}(0)$ which 
intersects exactly once to $\{ 1 \} \times (-M, M)$ at a point $p_0 = (1, y_0)$. 
Applying Proposition~\ref{prop:connecting-curve}, 
then there is a path $J:[0,1] \to D$ in $D - G^{-1}(0)$ connecting 
points $y_-$ and $y_+$ in $\{ 1 \} \times (-M, M)$ with $y_- < y_0 < y_+$ such 
that $G^{-1}(0) \cap (\{ 1 \} \times (y_-, y_+)) = \{ p_0 \}$. 
Then the composition $h \circ J:[0,1] \to D^2$ is a separating chord from $C$. 
\end{proof}

\

{\bf Acknowledgement}: 
We would like to thank Hiroshi Kokubu for his helpful comments. 
One of the authors submitted the first version of this paper 
to a certain journal a long time ago. 
An anonymous referee pointed out a gap in the proof of 
the main theorem, and made some very useful comments. 
We also would like to thank the referee at that time. 
The second author is partially supported by the JST PRESTO Grant Number JPMJPR16ED and by JSPS Kakenhi Grant Number 20K03583
\ 

%

\end{document}